\newtheorem{theorem}{Theorem}
\newtheorem{remark}[theorem]{Remark}
\newtheorem{lemma}[theorem]{Lemma}
\newtheorem{proposition}[theorem]{Proposition}
\newtheorem{definition}[theorem]{Definition}
\newcommand{\la} {\lambda}
\newcommand{\Om}{\Omega}
\newcommand{\ds} {\displaystyle}
\newcommand{\e}{\epsilon}
\newcommand{\al} {\alpha}
\newcommand{\ba} {\beta}
\newcommand{\ga} {\gamma}
\newcommand{\ra} {\rightarrow}
\newcommand{\De} {\Delta}
\newcommand{\La} {\Lambda}
\newcommand{\noi} {\noindent}
\newcommand{\mb} {\mathbb}
\newcommand{\mc} {\mathcal}
\numberwithin{theorem}{section} \numberwithin{equation}{section}
\title{$p$-biharmonic Kirchhoff equations with critical  Choquard nonlinearity}
\author{divya goel, sarika goyal, diksha saini}
\address{Divya Goel \newline
	Department of Mathematical Sciences, Indian Institute of Technology (BHU), Varanasi, 221005, India.}  
\email{divya.mat@iitbhu.ac.in }
\address{Sarika Goyal \newline
	Department of Mathematics, Netaji Subhash University of Technology, Dwarka sector-3, India}
\email{sarika@nsut.ac.in, sarika1.iitd@gmail.com }
\address{Diksha Saini \newline
	Department of Mathematics,  Netaji Subhash University of Technology, Dwarka sector-3, India}
\email{diksha.saini.phd23@nsut.ac.in }
\begin{document}
 \begin{abstract}
 \noi In this article, we deal with the following involving $p$-biharmonic  critical Choquard-Kirchhoff equation 
		$$ 
		\begin{array}{lr}
			\left(a+b\left(\int_{\mathbb R^N}|\Delta u|^p dx\right)^{\theta-1}\right) \Delta_{p}^{2}u = \alpha \left(|x|^{-\mu}*u^{p^*_\mu}\right)|u|^{p^*_\mu-2}u+ \la f(x) |u|^{r-2} u \; \text{in}\; \mathbb R^N,
		\end{array}
		$$
		where $a\geq 0$, $b> 0$, $0<\mu<N$, $N>2p$, $p\geq 2$, $\theta\geq1$, $\al$ and $\la$ are positive real parameters, $p_{\mu}^{*}= \frac{p(2N-\mu)}{2(N-2p)}$ is the upper critical exponent in the sense of Hardy-Littlewood-Sobolev inequality. The function $f \in L^{t}(\mathbb R^N)$ with $t= \frac{p^{*}}{(p^* -r)}$ if $p<r<p^*:=\frac{Np}{N-2p}$ and $t=\infty$ if $r\geq p^{*}$. We first prove the concentration compactness principle for the $p$-biharmonic Choquard-type equation. Then using the variational method together with the concentration-compactness, we established the existence and multiplicity of solutions to the above problem with respect to parameters $\la$ and \(\alpha\) for different values of $r$. The results obtained here are new even for $p-$Laplacian.
\end{abstract}
\subjclass[2020]{35J20, 35J30, 35J62}
\keywords{$p$-biharmonic Kirchhoff operator, Choquard nonlinearity, Hardy-Littlewood-Sobolev inequality,   Concentration Compactness Principle}

\maketitle
\section{Introduction}
This paper is concerned with the existence of solutions of the following Kirchhoff $p$-biharmonic equation involving Choquard nonlinearity
	$$ (\mc P_{\al, \la}) \quad \left\{
	\begin{array}{lr}
	 (a+b\|u\|^{p(\theta -1)})\Delta_{p}^2 u	= \alpha \left(|x|^{-\mu}*u^{p^*_\mu}\right)|u|^{p^*_\mu-2}u + \la f(x) |u|^{r-2} u \; \text{in}\; \mathbb R^N,
	\end{array}
	\right.
	$$
	where  \(N> 2p\), $p\geq 2$, \(a\geq 0\), \(b> 0\), $\theta\geq 1$,
	$\|u\|= \left(\int_{\mathbb R^N}|\Delta u|^p dx\right)^{1/p}$,
	 \(0<\mu<N \),  $f \in L^{t}(\mathbb R^N)$ with $t= \frac{p^{*}}{(p^* - r)}$ if $p<r<p^*:=\frac{Np}{N-2p}$ and $t=\infty$ if $r \geq p^{*}$. Here $p_{\mu}^{*}= \frac{p(2N-\mu)}{2(N-2p)}$ is the upper critical exponent in the sense of Hardy-Littlewood-Sobolev inequality.

     In recent years,  mathematicians have been studying non-local problems, the existence and multiplicity of solutions, due to their vast applications. One of the non-local problems  involves Choquard-type nonlinearity. The nonlinear Choquard  equation
    \begin{align}\label{lr4}
-\Delta u + V(x)u = (I_\alpha * F(u))f(u) + g(x, u) \quad \text{in } \mathbb{R}^N,
\end{align}
 where \(\alpha \in (0, N)\), \(f = F' \in C(\mathbb{R}, \mathbb{R})\), \(g \in C(\mathbb{R}^N \times \mathbb{R}, \mathbb{R})\) and $I_{\al}$ is a Riesz potential with
\[
I_\alpha(x) = \frac{B_\alpha}{|x|^{N-\alpha}} \quad \text{where} \ 
  B_\alpha = \frac{\Gamma\left(\frac{N-\alpha}{2}\right)}{\Gamma\left(\frac{\alpha}{2}\right)\pi^{N/2}2^\alpha},
\]
was first studied by S. Pekar \cite{r22} in 1954, for $V(x) =0$, $F(u)=|u|^2$, $\al=2, ~g(x,u)=0$ and $N=3$, to describe the quantum theory of the polaron at rest. Later, Choquard \cite{r46lieb1977existence} used equation \eqref{lr4}  to study an electron trapped in its hole, and Penrose used equation \eqref{lr4} as a model for self-gravitating matter in \cite{48}, respectively.  In the last decade,  Moroz and Schaftingen \cite{moroz2013,r31,r32} studied the Choquard equations and proved the existence of  ground state solutions. They further proved the regularity, positivity and radial symmetry of the solutions. Carvalho,  Silva, and Goulart \cite{r40goulart10choquard} studied  \eqref{lr4} with  convex-concave type of growth  and established the multiplicity of solutions using  the Nehari method with a fine analysis on the
nonlinear Rayleigh quotient. For a more detailed overview of the Choquard-type equations, we refer readers to  \cite{r26,r27,r39moroz2017guide} and references therein.\\
Another type of non-local operator  which researchers are studying currently is the Kirchhoff operator. In \cite{33} Kirchhoff introduced the following equation 
\begin{align*}
      \rho  u_{tt} - \left( \frac{\rho_0}{h}+ \frac{E}{2L}\int_{0}^{L} \left|u_x\right|^2 dx \right)  u_{xx}  =0, 
  \end{align*}
  where $\rho, \rho_0,h,E,L$ are constants, 
  to extend the classical  D'Alembert's wave equation by incorporating the changes in the length of the string produced by transverse vibrations.  
  To know more about Kirchhoff operator, one can see \cite{34,35,36} and references therein. On the other hand, many authors considered the Choquard-Kirchhoff type problems with  Hardy-Littlewood-Sobolev critical nonlinearities. Due to the presence of Kirchhoff operator and Choquard type nonlinearity, these type of problems are called doubly non-local. As an example, we cite \cite{r7} in which  Liang,  Pucci and  Zhang considered the  following  problem 
	\begin{align*}
	- (a+b\|u\|^{2})\Delta u	= \gamma \left(|x|^{-\mu}*u^{2^*_\mu}\right)|u|^{2^*_\mu-2}u + \la f(x) |u|^{r-2} u \ \text{in}\; \mathbb R^N,
\end{align*}
\noi where $\la, \ga >0$ are parameters. They showed the  existence and multiplicity of solutions for $1<q\leq2$. With no offense of providing the full list, for more details, please refer \cite{r50li2021multiple, r48goel2019kirchhoff, r49wang2019multiplicity} and references therein. 
\\\\
\noi 
 Problems involving quasilinear problems have been the focus of intensive research in recent years.  Problems involving $p$-biharmonic operators are of great importance as they appear in many applications, such as in elasticity and plate theory, Quantum mechanics, Astrophysics, Material sciences, see \cite{lazer1990large, mckenna1987nonlinear, mckenna1990travelling}.
    \begin{align}\label{lr1*}
        \Delta_{p}^{2}u = \la |u|^{p-2} u +  |u|^{r-2} u \ \text{in} \ \Omega ; \quad u=\nabla u= 0 \ \text{in} \ \partial\Omega.
    \end{align}
    where $\Omega$ is a bounded domain in $\mathbb R^N$, $p<r\leq p^*=\frac{Np}{N-2p}$. The authors showed that the problem \eqref{lr1*} possesses infinitely many solutions for $q<p^*$, using Fountain's theorem and for the case $q=p^*$, the existence result is obtained using abstract critical point theory based on a pseudo-index related to the cohomological index. While in \cite{42}, Chung and Minh investigated the $p$-biharmonic Kirchhoff type problem for the bounded domain and proved the existence of a non-trivial solution by the Mountain Pass theorem. On the other hand, there are very few articles available on problems involving the $p$-biharmonic operator over $\mathbb{R}^N$. In \cite{r35liu2016infinitely}, Liu and Chen considered the  following $p$-biharmonic elliptic equation
    \begin{align}\label{lr2}
        \Delta_{p}^{2}u-\Delta_p u+V(x)|u|^{p-2} u = \la f_1(x) |u|^{r-2} u+ f_2(x) |u|^{q-2} u; \ x\in \mathbb R^N,
    \end{align}
    where $2<2p<N$, $1<r<p<q<p^*=\frac{Np}{N-2p}$ and  the potential function $V(x)\in C(\mb R^N)$ satisfies $\displaystyle\inf_{x\in \mathbb R^N}V(x)>0$. Here, they established the existence of infinitely many high-energy solutions to equation \eqref{lr2}, using the variational methods. In \cite{r36bae2019existence},  Bae, Kim, Lee, and Park   examined the following $p$-biharmonic Kirchhoff type equation
    \begin{align*}
        \Delta_{p}^{2}u +M\left(\int_{\mathbb R^N}\phi_0(x,\nabla u)dx\right) \text{div}(\phi(x,\nabla u))+V(x)|u|^{p-2} u= \la g(x,u); \ x\in \mathbb R^N,
    \end{align*}
    where the function $\phi(x,v)$ is of type $|v|^{p-2} v$, $\phi(x,v)=\frac{d}{dv}\phi_0(x,v)$, the potential function $V(x)\in  C(\mb R^N) $ and $g:\mb R^N\times \mb R\ra \mb R$ satisfies the Carath\'{e}odory condition. 
 For $1<p<p^*=\frac{Np}{N-2p}$,   using Mountain Pass theorem and Fountain theorem, the authors proved the existence of a non-trivial weak solution and multiplicity of weak solutions. 
For more results on problems involving $p$-biharmonic operators, we refer   \cite{r33bhakta2015entire,44,45,r37alsaedi2021low}, see also the references therein.\\

  \noi  To the best of our knowledge, there is no result that takes into account the problem involving
the Kirchhoff operator, $p-$biharmonic operator, and the convolution term. Precisely, we will study a new class of equations with a physical,
chemical, and biological background.
    In this paper, we consider the critical Choquard-Kirchhoff equation involving $p$-biharmonic operator in the whole space $\mb R^N$. Motivated by the results introduced in \cite{r7, 43} and a few papers on the  $p$-biharmonic operator, we established the existence and multiplicity of solutions to the problem $(\mc P_{\al, \la})$  for the non-degenerate and degenerate case. 
 In this article, we established the existence and multiplicity of solutions depending on a different range of $r$ and $\theta$. 
The main difficulty here is the lack of compactness of the embedding. To solve this issue, we established the concentration-compactness lemma for the biharmonic operator with Choquard nonlinearity. Furthermore, we used variational methods to prove the existence and multiplicity of solutions.

\noi Before stating our main result, we assume the following condition on the function $f:\mathbb R^N \rightarrow\mathbb R$,
\begin{enumerate}
\item[$(f_1)$] $f\geq 0$ with $f\not\equiv 0$ in $\mathbb R^N$, $f\in L^{\frac{p^*}{p^*-r}}$.
\item[$(f_2)$] Let $\Omega:= \{x\in \mb R^N: f(x)>0\}$ be an open subset of $\mathbb R^N$ with $0<|\Omega|<\infty.$
\end{enumerate}

\noi The main results of this article are as follows:
\begin{theorem}\label{th1}
Let $1<r<p$, $p\theta  < 2p_{\mu}^{*}$. Suppose $f$ satisfies $(f_1)$ and $(f_2)$.
Then
\begin{enumerate}
			\item[(i)] For each $\lambda>0$ there exists $\overline{\La} >0$ such that  for all $\al \in (0, \overline{\La})$, the problem $(\mc P_{\al, \la})$ has a sequence of non-trivial solutions $\{u_n\}$ with $\mc E_{\al,\la}\leq 0$ and $u_{n}\rightarrow 0$, as $n\ra \infty$.
			\item[(ii)] For each $\alpha>0$ there exists $\underline{\La} >0$ such that for all $\lambda \in (0, \underline{\La})$, the problem $(\mc P_{\al, \la})$ has a sequence of non-trivial solutions $\{u_n\}$ with $\mc E_{\al,\la}\leq 0$ and $u_{n}\rightarrow 0$, as $n\ra \infty$.
		\end{enumerate}
\end{theorem}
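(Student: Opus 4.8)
The plan is to realise the solutions of $(\mc P_{\al,\la})$ as critical points of the energy functional
\[
\mathcal{E}_{\alpha,\lambda}(u)=\frac{a}{p}\|u\|^{p}+\frac{b}{p\theta}\|u\|^{p\theta}-\frac{\alpha}{2p^{*}_{\mu}}\mathcal{B}(u)-\frac{\lambda}{r}\int_{\mathbb{R}^N} f(x)|u|^{r}\,dx,
\]
where $\mathcal{B}(u)=\iint_{\mathbb{R}^N\times\mathbb{R}^N}\frac{|u(x)|^{p^{*}_{\mu}}|u(y)|^{p^{*}_{\mu}}}{|x-y|^{\mu}}\,dx\,dy$ and $X$ denotes the completion of $C_{c}^{\infty}(\mathbb{R}^N)$ under $\|u\|=\big(\int_{\mathbb{R}^N}|\Delta u|^{p}\,dx\big)^{1/p}$. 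One first checks that $\mathcal{E}_{\alpha,\lambda}\in C^{1}(X,\mathbb{R})$, is even, and satisfies $\mathcal{E}_{\alpha,\lambda}(0)=0$: differentiability of the Choquard term follows from the Hardy--Littlewood--Sobolev inequality, while the term containing $f$ is controlled by H\"older's inequality combined with the embedding $X\hookrightarrow L^{p^{*}}(\mathbb{R}^N)$ and hypothesis $(f_1)$. Since $1<r<p\le p\theta<2p^{*}_{\mu}$, the sublinear term $-\frac{\lambda}{r}\int f|u|^{r}$ makes $\mathcal{E}_{\alpha,\lambda}$ attain negative values arbitrarily close to the origin; this is exactly the mechanism yielding a sequence of negative critical values, and the natural device to extract them is Kajikiya's symmetric version of the mountain pass lemma (equivalently the Clark-type theorem of Liu--Wang), which produces critical points $u_{k}\neq 0$ with $\mathcal{E}_{\alpha,\lambda}(u_{k})\le 0$ and $u_{k}\to 0$.

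The obstruction to applying this scheme directly is that the negative critical Choquard term renders $\mathcal{E}_{\alpha,\lambda}$ unbounded below, growing like $\|u\|^{2p^{*}_{\mu}}$ against the Kirchhoff terms of order $\|u\|^{p\theta}$ with $p\theta<2p^{*}_{\mu}$. I would remove this by a Garc\'ia--Azorero--Peral truncation: using the HLS and Sobolev inequalities to estimate $\mathcal{B}(u)\le C\|u\|^{2p^{*}_{\mu}}$ and $\int f|u|^{r}\le C\|f\|_{t}\|u\|^{r}$, I analyse the scalar function $g(s)=\frac{a}{p}s^{p}+\frac{b}{p\theta}s^{p\theta}-\frac{\alpha C}{2p^{*}_{\mu}}s^{2p^{*}_{\mu}}$, which is positive on an interval $(0,R_{0})$ and eventually decreasing to $-\infty$. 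Choosing a smooth nonincreasing cutoff $\tau:[0,\infty)\to[0,1]$ with $\tau\equiv1$ on $[0,R_{0}]$ and $\tau\equiv0$ past a larger radius, I set
\[
\widetilde{\mathcal{E}}_{\alpha,\lambda}(u)=\frac{a}{p}\|u\|^{p}+\frac{b}{p\theta}\|u\|^{p\theta}-\frac{\alpha}{2p^{*}_{\mu}}\tau(\|u\|)\,\mathcal{B}(u)-\frac{\lambda}{r}\int_{\mathbb{R}^N} f|u|^{r}\,dx .
\]
This truncated functional is even, of class $C^{1}$, bounded below and coercive, and coincides with $\mathcal{E}_{\alpha,\lambda}$ on the ball $\{\|u\|\le R_{0}\}$. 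The smallness of $\alpha$ in part (i) (respectively of $\lambda$ in part (ii)) enters here, through the HLS--Sobolev constant $C$, to ensure that $g$ has a genuine positive hump so that the truncation is effective and the negative energy window lies below the compactness threshold.

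The core analytic step---and the main difficulty---is the Palais--Smale condition for $\widetilde{\mathcal{E}}_{\alpha,\lambda}$ at negative levels. Coercivity of the truncated functional forces any $(PS)_{c}$ sequence $\{u_{n}\}$ to be bounded, so up to a subsequence $u_{n}\weak u$ in $X$. At this point I invoke the concentration-compactness principle for the $p$-biharmonic Choquard term proved earlier in the paper to control $\mathcal{B}(u_{n})$: the associated defect measures concentrate in at most countably many atoms whose masses are bounded below in terms of the best constant of the Sobolev embedding $X\hookrightarrow L^{p^{*}}(\mathbb{R}^N)$ combined with the Hardy--Littlewood--Sobolev inequality, and a bookkeeping of the energy shows that any nontrivial concentration, or any escape of mass to infinity, would pin the level $c$ at or above a strictly positive threshold. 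As $c<0$, all such possibilities are excluded, the Choquard term passes to the limit, and $u_{n}\to u$ strongly in $X$. This is precisely where criticality bites and where the concentration-compactness lemma is indispensable.

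It remains to verify the geometric hypothesis of Kajikiya's theorem, namely that for each $k\in\mathbb{N}$ there is a symmetric set of Krasnoselskii genus at least $k$ on which $\widetilde{\mathcal{E}}_{\alpha,\lambda}<0$. Using $(f_2)$, I fix $k$ linearly independent functions supported in $\Omega=\{f>0\}$, spanning a $k$-dimensional subspace $E_{k}\subset X$; on $E_{k}$ all norms are equivalent and, by compactness of the unit sphere, $m_{k}:=\min\{\int f|u|^{r}:u\in E_{k},\ \|u\|=1\}>0$. For $u\in E_{k}$ with $\|u\|=1$ and $t>0$ small, $\widetilde{\mathcal{E}}_{\alpha,\lambda}(tu)\le \frac{a}{p}t^{p}+\frac{b}{p\theta}t^{p\theta}-\frac{\lambda}{r}m_{k}t^{r}<0$ because $r<p\le p\theta$, so the sphere of a suitably small radius $\rho_{k}$ in $E_{k}$ has genus $k$ and lies in $\{\widetilde{\mathcal{E}}_{\alpha,\lambda}<0\}$. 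Kajikiya's theorem then yields critical points $u_{k}$ of $\widetilde{\mathcal{E}}_{\alpha,\lambda}$ with $\widetilde{\mathcal{E}}_{\alpha,\lambda}(u_{k})\le 0$ and $u_{k}\to 0$ in $X$; since $\|u_{k}\|\to 0$, eventually $\tau(\|u_{k}\|)=1$, so each such $u_{k}$ is a nontrivial critical point of the original functional and hence a solution of $(\mc P_{\al,\la})$ with $\mathcal{E}_{\alpha,\lambda}(u_{k})\le 0$, which gives both (i) and (ii). The degenerate case $a=0$ requires no change, as the inequality $r<p\theta$ still holds.
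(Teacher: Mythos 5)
Your proposal follows essentially the same route as the paper: a Garc\'ia--Azorero--Peral truncation of the functional, the concentration-compactness lemma to verify the Palais--Smale condition at negative levels under the smallness of $\alpha$ (resp.\ $\lambda$), and Kajikiya's symmetric mountain pass theorem applied on finite-dimensional subspaces built from $(f_2)$ to produce the vanishing sequence of negative critical values. The only differences are cosmetic --- you truncate only the Choquard term rather than both nonlinear terms, and you prove the genus estimate $\sup_{A_k}\widetilde{\mathcal{E}}_{\alpha,\lambda}<0$ directly by taking $t$ small (using $r<p$), which is in fact cleaner than the paper's contradiction argument.
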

\begin{theorem}\label{th12}
Let $r=p$,  $p\theta  < 2p_{\mu}^{*}$. Suppose $f$ satisfy $(f_1)$.
Then there exists a positive constant $\hat a$ such that for each $a>\hat a$ and $\la\in (0, aS\|f\|^{-1}_{\frac{}{}})$, the problem $(\mc P_{\al,\la})$ has at least $n$ pairs of non-trivial solutions.
\end{theorem}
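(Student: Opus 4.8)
The plan is to obtain the solutions as critical points of the even $C^{1}$ energy functional
$$\mathcal E_{\alpha,\lambda}(u)=\frac{a}{p}\|u\|^{p}+\frac{b}{p\theta}\|u\|^{p\theta}-\frac{\alpha}{2p^*_\mu}\int_{\mathbb R^N}\int_{\mathbb R^N}\frac{|u(x)|^{p^*_\mu}|u(y)|^{p^*_\mu}}{|x-y|^{\mu}}\,dx\,dy-\frac{\lambda}{p}\int_{\mathbb R^N}f(x)|u|^{p}\,dx$$
on the energy space $X$ with $\|u\|=\|\Delta u\|_{L^p(\mathbb R^N)}$, noting $\mathcal E_{\alpha,\lambda}(0)=0$ and that, because $r=p$, the last term is $p$-homogeneous and competes directly with $\tfrac ap\|u\|^{p}$. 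First I would record the single inequality that drives everything: by H\"older's and the Sobolev inequalities $\int_{\mathbb R^N}f|u|^{p}\le\|f\|S^{-1}\|u\|^{p}$, so the hypothesis $\lambda<aS\|f\|^{-1}$ gives $a\|u\|^{p}-\lambda\int_{\mathbb R^N}f|u|^{p}\ge(a-\lambda\|f\|S^{-1})\|u\|^{p}\ge0$. This simultaneously makes the origin a strict local minimum and yields the coercivity estimate
$$\mathcal E_{\alpha,\lambda}(u)-\tfrac{1}{2p^*_\mu}\langle\mathcal E_{\alpha,\lambda}'(u),u\rangle\ge\Big(\tfrac1p-\tfrac1{2p^*_\mu}\Big)\big(a-\lambda\|f\|S^{-1}\big)\|u\|^{p}+\Big(\tfrac1{p\theta}-\tfrac1{2p^*_\mu}\Big)b\|u\|^{p\theta},$$
where $p\theta<2p^*_\mu$ guarantees the second coefficient is positive, so that every Palais--Smale sequence is bounded.

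Next I would set up the $\mathbb Z_2$-symmetric minimax scheme. The geometry has two ingredients: from the positivity above there are $\rho,\eta_0>0$ with $\mathcal E_{\alpha,\lambda}\ge\eta_0$ on $\{\|u\|=\rho\}$; and on any finite-dimensional subspace $V_k\subset X$ all norms are equivalent, so the Choquard term dominates with a positive multiple of $\|u\|^{2p^*_\mu}$, and since $2p^*_\mu>p\theta>p$ one gets $\mathcal E_{\alpha,\lambda}\to-\infty$ on $V_k$. Using the concentration-compactness principle established earlier in the paper, I would then show that $\mathcal E_{\alpha,\lambda}$ satisfies the Palais--Smale condition at every level $c$ below a threshold $c^{\ast}=c^{\ast}(a,\alpha)$, namely the least energy carried by a nontrivial bubble of the limiting Choquard--Kirchhoff equation: below $c^{\ast}$ the concentration-compactness alternative rules out both vanishing at infinity and concentration. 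Defining the higher symmetric minimax values $c_1\le c_2\le\dots\le c_n$ through Krasnoselskii's genus (equivalently, via odd maps of $k$-dimensional balls), the intersection lemma gives $c_k\ge\eta_0>0$, and a standard deformation argument shows that any $c_k<c^{\ast}$ is a critical value of $\mathcal E_{\alpha,\lambda}$ producing at least one pair of critical points.

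The decisive and hardest step is the level estimate $c_n<c^{\ast}$. The crude bound $c_k\le\sup_{V_k}\mathcal E_{\alpha,\lambda}$ is insufficient, since for $k\ge2$ this supremum already exceeds $c^{\ast}$; instead I would estimate the minimax values along a carefully chosen $n$-dimensional family of concentrating test configurations and exploit the strictly positive gain produced by the term $\lambda\int_{\mathbb R^N}f|u|^{p}$ (a Brezis--Nirenberg-type computation, using $f\ge0$, $f\not\equiv0$ on the open set $\Omega$) to push each of the first $n$ levels strictly below the bubbling threshold. Here the hypothesis $a>\hat a$ enters: the compactness threshold $c^{\ast}(a,\alpha)$ grows with $a$, and the test-function estimate shows that the first $n$ genus levels grow more slowly, so that choosing $\hat a$ large enough (depending on $n$, $\alpha$ and $\lambda$) forces $0<c_1\le\dots\le c_n<c^{\ast}$. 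Once this is in place, the symmetric minimax theorem delivers $n$ pairs of nontrivial critical points of $\mathcal E_{\alpha,\lambda}$, that is, $n$ pairs of nontrivial solutions of $(\mathcal P_{\alpha,\lambda})$; nontriviality is automatic since the critical values are positive while $\mathcal E_{\alpha,\lambda}(0)=0$. I expect this final reconciliation of the $n$ distinct genus levels with the single-bubble compactness threshold to be the main technical obstacle, the boundedness of Palais--Smale sequences and the two geometric conditions being comparatively routine.
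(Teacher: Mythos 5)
Your proof follows the same architecture as the paper's: the even functional $\mc E_{\la}$ (the paper fixes $\al=1$ for this theorem), the bound $\int_{\mb R^N}f|u|^p\,dx\le S^{-1}\|f\|_{\frac{p^*}{p^*-p}}\|u\|^p$ combined with $\la<aS\|f\|^{-1}_{\frac{p^*}{p^*-p}}$ to obtain both boundedness of Palais--Smale sequences and the mountain-pass geometry at the origin, the $\mb Z_2$-symmetric mountain pass theorem organized through Krasnoselskii's genus, and the local $(PS)_c$ condition for $c$ below a threshold $c^*$ coming from the concentration-compactness lemma (the paper's Lemma 3.3, with $c^*=\min\{c_1,c_2\}$ accounting for both concentration at finite points and loss of mass at infinity). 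The only genuine divergence is the level estimate $c_n^{\la}<c^*$, which you single out as the hardest step and propose to attack with concentrating test configurations and a Br\'ezis--Nirenberg-type gain from $\la\int_{\mb R^N} f|u|^p\,dx$. The paper's route (Lemma 5.6 together with the proof of Theorem 1.2) is far more elementary: since $f\ge 0$, simply dropping the $\la$-term gives $c_n^{\la}\le S_n:=\inf_{A\in\Gamma_n}\max_{u\in A}\bigl[\tfrac{a}{p}\|u\|^p+\tfrac{b}{p\theta}\|u\|^{p\theta}-\tfrac{1}{2p_\mu^*}\int_{\mb R^N}\int_{\mb R^N}\tfrac{|u(x)|^{p_\mu^*}|u(y)|^{p_\mu^*}}{|x-y|^{\mu}}\,dx\,dy\bigr]$, a finite minimax bound independent of $\la$ that grows at most affinely in $a$, whereas $c^*\sim \bigl(aS_{H,L}\bigr)^{\frac{2N-\mu}{N-\mu+2p}}$ grows superlinearly in $a$ because $N>2p$ forces the exponent to exceed $1$; taking $a>\hat a$ large therefore already yields $0<c_1^{\la}\le\cdots\le c_n^{\la}<c^*$ with no bubble analysis whatsoever. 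You do also invoke exactly this large-$a$ mechanism, and it is the one that actually closes the argument; the Br\'ezis--Nirenberg computation you sketch is not only unnecessary but would be delicate on its own, since for $r=p$ the perturbation $\la\int_{\mb R^N} f|u|^p\,dx$ is dominated by $\tfrac{a}{p}\|u\|^p$ throughout the admissible range of $\la$ and cannot be expected to supply the strict energy gain below the bubbling threshold that such computations normally require.
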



\begin{theorem}\label{th13}
Let $p\theta \leq r<p^*$,  $p\theta  < 2p_{\mu}^{*}$. Suppose $f$ satisfy $(f_1)$.
Then there exists a positive constant $\tilde{a}$ such that for each $a>\tilde a$ and for all  $\la>0$, the problem $(\mc P_{\al,\la})$ has infinitely many non-trivial solutions.
\end{theorem}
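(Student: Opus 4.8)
The plan is to realise the weak solutions of $(\mc P_{\al,\la})$ as critical points of the even $C^1$ functional
\[
\mc E_{\al,\la}(u)=\frac{a}{p}\|u\|^p+\frac{b}{p\theta}\|u\|^{p\theta}-\frac{\al}{2p_\mu^*}\int_{\mathbb R^N}\int_{\mathbb R^N}\frac{|u(x)|^{p_\mu^*}|u(y)|^{p_\mu^*}}{|x-y|^\mu}\,dx\,dy-\frac{\la}{r}\int_{\mathbb R^N}f(x)|u|^{r}\,dx
\]
on $X=\mathcal D^{2,p}(\mathbb R^N)$ with $\|u\|=\|\Delta u\|_{p}$, and to extract infinitely many of them by a $\mathbb Z_2$-symmetric min--max scheme. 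Since $\mc E_{\al,\la}$ is even and $\mc E_{\al,\la}(0)=0$, I would first record the symmetric mountain--pass geometry: because $p\le p\theta\le r<p^*<2p_\mu^*$, both the Choquard and the $f$-terms are of strictly higher homogeneity than the leading Kirchhoff term $\frac ap\|u\|^p$ (the borderline $r=p$ being absorbed by the hypothesis $a>\tilde a$), so $0$ is a strict local minimum, there are $\rho,\delta>0$ with $\mc E_{\al,\la}\ge\delta$ on $\{\|u\|=\rho\}$, and $\mc E_{\al,\la}\to-\infty$ on every finite--dimensional subspace.

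The first concrete step is boundedness of Palais--Smale sequences. Evaluating $\mc E_{\al,\la}(u_n)-\tfrac1r\langle\mc E_{\al,\la}'(u_n),u_n\rangle$ and using $p\theta\le r<2p_\mu^*$, every nonlinear contribution acquires a favourable sign and one is left with $b(\tfrac1{p\theta}-\tfrac1r)\|u_n\|^{p\theta}$ (or, when $r=p\theta$, $a(\tfrac1p-\tfrac1r)\|u_n\|^{p}$) bounded by $c+o(\|u_n\|)$, whence $\{u_n\}$ is bounded in $X$.

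The main obstacle is the local Palais--Smale condition, and this is exactly where the large--$a$ hypothesis and the concentration--compactness principle proved earlier enter. For a bounded $(PS)_c$ sequence I would pass to a weak limit $u$, set $A=\lim\|u_n\|^p$, and combine the $p$-biharmonic Brezis--Lieb splitting with the Choquard Brezis--Lieb lemma to show that $u$ solves the frozen--coefficient limit equation and that the lost mass $L=A-\|u\|^p$ obeys $(a+bA^{\theta-1})L=\al D$ with $D\le S^{-2p_\mu^*/p}L^{2p_\mu^*/p}$, $S$ being the best constant in the Hardy--Littlewood--Sobolev--Sobolev inequality for $\Delta_p^2$. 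The genuinely hard part is the Kirchhoff coupling: the coefficient $a+bA^{\theta-1}$ is a global quantity, so the splitting is non--additive and must be tracked through $A$ rather than term by term. This yields the dichotomy $L=0$ or $L\ge L_0:=\big(S^{2p_\mu^*/p}(a+bA^{\theta-1})/\al\big)^{p/(2p_\mu^*-p)}$, and in the concentrating case the estimate $c\ge \mc E_{\al,\la}(u)+a\big(\tfrac1p-\tfrac1{2p_\mu^*}\big)L_0$ produces a compactness threshold $c^*=c^*(a,b,\al)$ with $c^*\to+\infty$ as $a\to\infty$; hence $\mc E_{\al,\la}$ satisfies $(PS)_c$ for every $c<c^*$.

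Finally I would run the multiplicity argument. Using a Schauder basis of $X$ and a $\mathbb Z_2$-symmetric min--max built on spheres of controlled radius (a fountain/pseudo-index construction of the type cited for \eqref{lr1*}), the candidate levels are bounded above by a quantity of the form $\tfrac ap\rho^p+\tfrac b{p\theta}\rho^{p\theta}$, while the lower mountain--pass barrier keeps them above $\delta>0$; choosing $a>\tilde a$ forces all the required levels strictly below $c^*(a)$, so the local $(PS)$ property makes each of them critical and the symmetry delivers infinitely many pairs $\pm u_n$ of non-trivial solutions, for every $\la>0$. The delicate point in this last step---and precisely what separates Theorem~\ref{th13} from the finite count of Theorem~\ref{th12}---is to keep infinitely many genus levels simultaneously beneath the single finite threshold $c^*(a)$, which is what the superlinearity $r\ge p\theta$ together with the choice $a>\tilde a$ is used to guarantee.
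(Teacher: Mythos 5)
Your proposal follows essentially the same route as the paper: the even functional $\mc E_{\la}$ is handled by the $\mathbb Z_2$-symmetric mountain pass theorem (Theorem \ref{sm}), boundedness of Palais--Smale sequences comes from evaluating $\mc E_{\la}(u_k)-\tfrac1r\langle\mc E_{\la}'(u_k),u_k\rangle$ exactly as in Lemma \ref{l1}, local compactness below a threshold $c^*$ growing with $a$ is supplied by the concentration--compactness analysis of Lemma \ref{cc}, and the min--max levels, bounded above by the $\la$-independent quantities $S_n$ of Lemma \ref{ss}, are forced below $c^*$ by taking $a>\tilde a$. The only notable difference is cosmetic: you phrase the compactness dichotomy through a global Br\'ezis--Lieb splitting of the lost mass, whereas in the regime $p\theta<2p_{\mu}^{*}$ the paper tracks the atoms $\omega_i$ and the mass at infinity via the concentration--compactness lemma; both produce the same dichotomy and the same threshold.
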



\begin{theorem}\label{th3}
    Let  $p\geq 2$, $p<r<p^*$, $f\geq 0$ with $f\not\equiv 0$ in $\mathbb R^N$, and $p\theta \geq 2p_{\mu}^{*}$ . If either $p\theta = 2p_{\mu}^{*}$, $a>0$ and $b> 2^{p} S_{H,L}^{-\frac{2p_{\mu}^{*}}{p}}$ or $p\theta> 2 p_{\mu}^{*}$, $a>0$ and 
    \begin{align}\label{eb}
    b>\frac{2 p_{\mu}^{*}-p}{p(\theta-1)}\left[\left(\frac{a p(\theta-1)}{(p\theta- 2p_{\mu}^{*})}\right)^{\frac{2p_{\mu}^{*}-p\theta }{2p_{\mu}^{*}-p}} \left(2^p S_{H,L}^{-\frac{2p_{\mu}^{*}}{p}}\right)^{\frac{p(\theta-1)}{2p_{\mu}^{*}-p}}\right]:=b^{1},
    \end{align}
    then there exists $\lambda^*>0$ such that the problem $(\mc P_{\al,\la})$ admits at least two non-trivial solutions in $D^{2,p}(\mathbb R^N)$ for all $\lambda>\lambda^*$.   \end{theorem}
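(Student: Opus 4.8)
The plan is to realize both solutions as critical points of the energy functional $\mc E_{\al,\la}\colon D^{2,p}(\RN)\to\R$ attached to $(\mc P_{\al,\la})$,
\[
\mc E_{\al,\la}(u)=\frac{a}{p}\|u\|^{p}+\frac{b}{p\theta}\|u\|^{p\theta}-\frac{\al}{2p_{\mu}^{*}}\iint_{\RN\times\RN}\frac{|u(x)|^{p_{\mu}^{*}}|u(y)|^{p_{\mu}^{*}}}{|x-y|^{\mu}}\,dx\,dy-\frac{\la}{r}\int_{\RN}f|u|^{r}\,dx,
\]
whose critical points are exactly the weak solutions. The first solution will be produced as a negative-energy global minimiser and the second through the mountain pass theorem at a strictly positive level. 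The whole scheme rests on the ordering of the homogeneity degrees $p<r<p^{*}<2p_{\mu}^{*}\le p\theta$: the nonlocal Kirchhoff term carries the highest power and, for $b$ large, controls the critical Choquard term.

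First I would establish that $\mc E_{\al,\la}$ is coercive and bounded below. Bounding the Choquard term by the Hardy--Littlewood--Sobolev--Sobolev inequality through $\iint\le S_{H,L}^{-2p_{\mu}^{*}/p}\|u\|^{2p_{\mu}^{*}}$ and the perturbation by $\int f|u|^{r}\le C\|f\|_{t}\|u\|^{r}$, the question reduces to a scalar lower bound for $\tfrac{a}{p}s^{p}+\tfrac{b}{p\theta}s^{p\theta}-\tfrac{\al}{2p_{\mu}^{*}}S_{H,L}^{-2p_{\mu}^{*}/p}s^{2p_{\mu}^{*}}$. Interpolating the critical power $2p_{\mu}^{*}$ between $p$ and $p\theta$ by Young's inequality gives the needed bound precisely when $p\theta=2p_{\mu}^{*}$ and $b>2^{p}S_{H,L}^{-2p_{\mu}^{*}/p}$, or $p\theta>2p_{\mu}^{*}$ and $b>b^{1}$; indeed the two exponents in $b^{1}$ in \eqref{eb} are H\"older conjugates, which is exactly the footprint of that Young step. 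Since $r<2p_{\mu}^{*}\le p\theta$, the subcritical perturbation does not destroy coercivity. Evaluating $t\mapsto\mc E_{\al,\la}(tu_{0})$ along a test function $u_{0}$ with $\int f|u_{0}|^{r}>0$, the term $-\tfrac{\la}{r}t^{r}\int f|u_{0}|^{r}$ forces $\inf\mc E_{\al,\la}<0$ once $\la$ exceeds a threshold $\la^{*}$, so the minimiser $u_{1}$ is non-trivial; this is solution one.

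For the second solution, I would exploit that $a>0$ and $r>p$ make the origin a strict local minimum: for each fixed $\la$ one can choose $\rho>0$ small with $\mc E_{\al,\la}(u)\ge\tfrac{a}{2p}\|u\|^{p}>0$ on the punctured ball $0<\|u\|\le\rho$, whence $\|u_{1}\|>\rho$. Together with $\mc E_{\al,\la}(u_{1})<0=\mc E_{\al,\la}(0)$ this yields mountain pass geometry, and the mountain pass theorem provides a Palais--Smale sequence at the level $c=\inf_{\gamma}\max_{t}\mc E_{\al,\la}(\gamma(t))\ge\beta>0$, distinct from $u_{1}$ and from $0$.

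The crux, and the step I expect to be the main obstacle, is the verification of the Palais--Smale condition at the levels $\inf\mc E_{\al,\la}$ and $c$, where the critical Choquard exponent destroys compactness of the embedding. I would take a Palais--Smale sequence $u_{n}\rightharpoonup u$, set $v_{n}=u_{n}-u$, and split the energy using the Brezis--Lieb lemma together with the concentration-compactness principle established above for the $p$-biharmonic Choquard problem. The superadditivity $(\|u\|^{p}+\|v_{n}\|^{p})^{\theta}\ge\|u\|^{p\theta}+\|v_{n}\|^{p\theta}$ of the Kirchhoff primitive, the HLS--Sobolev control of the lost critical mass by $S_{H,L}^{-2p_{\mu}^{*}/p}(\lim\|v_{n}\|^{p})^{2p_{\mu}^{*}/p}$, and precisely the lower bound on $b$ in \eqref{eb} combine to force $\lim\|v_{n}\|^{p}=0$, giving $u_{n}\to u$ strongly. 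This is exactly where $p\theta\ge 2p_{\mu}^{*}$ and $b$ large are indispensable: they keep both energy levels below the critical compactness threshold. Once strong convergence is secured, the $(S_{+})$-type monotonicity of $\Delta_{p}^{2}$ (available since $p\ge 2$) passes the nonlinearities to the limit, and the two limits are the required non-trivial solutions in $D^{2,p}(\RN)$ for all $\la>\la^{*}$.
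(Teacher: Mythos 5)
Your proposal follows essentially the same route as the paper: coercivity and boundedness below of $\mc E_{\al,\la}$ under the stated conditions on $b$, a negative-energy global minimiser for $\la$ large as the first solution, a mountain pass solution at a positive level as the second, and a Palais--Smale verification via a Br\'ezis--Lieb splitting in which the lost mass $l=\lim\|u_n-u\|$ satisfies $a l^{p}+b l^{p\theta}\le 2^{p}S_{H,L}^{-2p_{\mu}^{*}/p}l^{2p_{\mu}^{*}}$ and is forced to vanish by the largeness of $b$ (directly when $p\theta=2p_{\mu}^{*}$, via Young's interpolation and the threshold $b^{1}$ when $p\theta>2p_{\mu}^{*}$). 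The only slight misattribution is that the Young step and the conditions $b>2^{p}S_{H,L}^{-2p_{\mu}^{*}/p}$, $b>b^{1}$ are needed in the compactness argument rather than for coercivity (which holds under weaker assumptions on $b$), but you also invoke them correctly there, so the argument is sound.
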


\begin{theorem}\label{th4}
(Degenerate Case)
Let  $1<r<p^*$, $f$ satisfies $(f_1)$ and $p\theta \geq 2p_{\mu}^{*}$. If  $a=0$ and $b> 2^{p} S_{H,L}^{-\frac{2p_{\mu}^{*}}{p}}$, then for all $\la>0$, the problem $\mc(P_{\al,\la})$ admits infinitely many pairs of distinct solutions in $D^{2,p}(\mathbb R^N)$ for all $\lambda>\lambda^*$.  Moreover, any solution $u\in D^{2,p}(\mathbb R^N)\setminus \{0\}$ satisfies
\[\|u\|\leq \left[\frac{\la \|f\|_{\frac{p^*}{p^*-r}}}{S^{\frac{r}{p}}\left(b- S_{H,L}^{-\frac{2p_{\mu}^{*}}{p}}\right)}\right]^{\frac{1}{p\theta-r}}.\]
\end{theorem}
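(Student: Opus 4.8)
The plan is to obtain the solutions as critical points of the energy functional attached to $(\mc P_{\al,\la})$ in the degenerate regime $a=0$,
\begin{align*}
\mc E_{\al,\la}(u)=\frac{b}{p\theta}\|u\|^{p\theta}-\frac{\al}{2p_\mu^*}\int_{\RN}\l(|x|^{-\mu}*|u|^{p_\mu^*}\r)|u|^{p_\mu^*}\,dx-\frac{\la}{r}\int_{\RN}f(x)|u|^{r}\,dx,
\end{align*}
which is of class $C^1$ on $D^{2,p}(\RN)$. As every term is even in $u$, the functional is even, so the natural tool for producing infinitely many pairs of solutions is a $\mb Z_2$-symmetric critical point theorem. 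Since $1<r<p^*$ covers both the sublinear and the superlinear ranges while $p\theta$ is the dominant exponent (note $p\theta\geq 2p_\mu^*>p^*>r$), I would use the symmetric, genus-based variant of Clark/Kajikiya for functionals that are even, bounded below and coercive, which delivers a sequence of critical values tending to $0$.

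First I would prove coercivity and boundedness from below. The Hardy--Littlewood--Sobolev inequality and the definition of $S_{H,L}$ bound the Choquard term by $S_{H,L}^{-2p_\mu^*/p}\|u\|^{2p_\mu^*}$, while $(f_1)$, Hölder's inequality and the embedding $D^{2,p}(\RN)\hookrightarrow L^{p^*}(\RN)$ bound the last term by $\la\|f\|_{p^*/(p^*-r)}S^{-r/p}\|u\|^{r}$, whence
\begin{align*}
\mc E_{\al,\la}(u)\ \geq\ \frac{b}{p\theta}\|u\|^{p\theta}-\frac{\al}{2p_\mu^*}S_{H,L}^{-\frac{2p_\mu^*}{p}}\|u\|^{2p_\mu^*}-\frac{\la}{r}\|f\|_{\frac{p^*}{p^*-r}}S^{-\frac{r}{p}}\|u\|^{r}.
\end{align*}
If $p\theta>2p_\mu^*$ the leading term dominates for large $\|u\|$ regardless of the constants; if $p\theta=2p_\mu^*$ the hypothesis $b>2^{p}S_{H,L}^{-2p_\mu^*/p}$ makes the coefficient of $\|u\|^{p\theta}$ positive. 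In both cases $\mc E_{\al,\la}$ is coercive and bounded below. Moreover, since $r<p\theta$, choosing in any prescribed finite-dimensional subspace (spanned, say, by disjointly supported bumps inside $\{f>0\}$, possible by $(f_1)$) the lowest-order term $-\frac{\la}{r}\int f|u|^r$ drives $\mc E_{\al,\la}$ strictly below $0$ on small spheres, which is exactly the genus hypothesis.

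The principal obstacle is the Palais--Smale condition, because of the critical Choquard nonlinearity. Coercivity makes every $(\PS)_c$ sequence $\{u_n\}$ bounded, so $u_n\weak u$ in $D^{2,p}(\RN)$ along a subsequence; the issue is to exclude concentration of the $L^{2p_\mu^*}$-type convolution mass at points or at infinity. Here I would invoke the concentration--compactness principle for the $p$-biharmonic Choquard problem proved earlier in the paper: it quantises any loss of compactness through atoms weighted by $S_{H,L}$. The threshold $b>2^{p}S_{H,L}^{-2p_\mu^*/p}$, together with $a=0$ and $p\theta\geq 2p_\mu^*$, makes the energy carried by a single atom strictly exceed the admissible level, forcing all atoms to vanish and hence $\int(|x|^{-\mu}*|u_n|^{p_\mu^*})|u_n|^{p_\mu^*}\to\int(|x|^{-\mu}*|u|^{p_\mu^*})|u|^{p_\mu^*}$. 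A Brezis--Lieb argument for the nonlocal term and the compactness of $u\mapsto\int f|u|^r$ then reduce $\langle\mc E_{\al,\la}'(u_n),u_n-u\rangle\to0$ to $\langle\Delta_p^2u_n,u_n-u\rangle\to0$, and the $(S_+)$ property of $\Delta_p^2$ yields $u_n\to u$ strongly (the degenerate coefficient $b\|u_n\|^{p(\theta-1)}$ being harmless, as $\|u_n\|\to0$ gives $u_n\to0$ directly).

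With coercivity, the lower bound, the genus geometry and $(\PS)$ established, the symmetric theorem produces infinitely many pairs $\pm u_n$ of critical points for every $\la>0$, which is the asserted multiplicity. For the a priori estimate I would test the weak formulation with $u$ itself; in the degenerate case this reads $b\|u\|^{p\theta}=\al\int(|x|^{-\mu}*|u|^{p_\mu^*})|u|^{p_\mu^*}+\la\int f|u|^{r}$. Estimating the critical term by $S_{H,L}^{-2p_\mu^*/p}\|u\|^{2p_\mu^*}$ and the subcritical one as above, and then using $p\theta\geq 2p_\mu^*$ to move the critical contribution to the left, I obtain $\l(b-S_{H,L}^{-2p_\mu^*/p}\r)\|u\|^{p\theta-r}\leq\la\|f\|_{p^*/(p^*-r)}S^{-r/p}$, which is precisely the claimed bound on $\|u\|$.
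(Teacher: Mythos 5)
Your overall skeleton — even, coercive, bounded-below functional plus a genus-based symmetric critical point theorem on small spheres in finite-dimensional subspaces where the $-\frac{\la}{r}\int f|u|^r$ term dominates, followed by the a priori bound obtained by testing with $u$ — is exactly the paper's route (the paper uses the Krasnoselskii genus theorem on $E_n=\mathrm{span}\{e_1,\dots,e_n\}$ and lets $n\to\infty$; your disjoint bumps in $\{f>0\}$ serve the same purpose). The coercivity estimate and the final computation for $\|u\|$ also match.

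The genuine gap is in your verification of the Palais--Smale condition. You propose to kill the atoms of the concentration--compactness decomposition by an energy-threshold argument, claiming that $b>2^pS_{H,L}^{-2p_\mu^*/p}$ makes "the energy carried by a single atom strictly exceed the admissible level." This mechanism is not available in the regime $a=0$, $p\theta\geq 2p_\mu^*$. First, the local test $\langle\mc E_{\al,\la}'(u_k),\psi_{\e,i}u_k\rangle\to0$ yields at best $b\,\omega_{i}^{\theta}\leq\al\,\nu_i$, and combining this with $S_{H,L}\nu_i^{p/(2p_\mu^*)}\leq\omega_i$ gives $\nu_i^{\,p\theta/(2p_\mu^*)-1}\leq \al\, b^{-1}S_{H,L}^{-\theta}$; since $p\theta/(2p_\mu^*)\geq1$ this is an \emph{upper} bound on $\nu_i$, not the dichotomy "$\nu_i=0$ or $\nu_i\geq\mathrm{const}$" that the threshold argument requires. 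Second, and more decisively, there is no positive energy quantum to contradict: writing $c=\lim\bigl(\mc E_{\al,\la}(u_k)-\tfrac{1}{p\theta}\langle\mc E_{\al,\la}'(u_k),u_k\rangle\bigr)$ with $a=0$, every surviving term carries a coefficient $\bigl(\tfrac{1}{p\theta}-\tfrac{1}{2p_\mu^*}\bigr)\leq0$ or $\bigl(\tfrac{1}{p\theta}-\tfrac1r\bigr)<0$ multiplying a nonnegative quantity, so an atom only pushes $c$ \emph{downward} and produces no contradiction with $c<0$. The paper instead proves compactness here (Lemma 3.6, case $a=0$) by an entirely different, direct argument: boundedness of $\{u_k\}$, a Br\'ezis--Lieb splitting of the Choquard term, and the elementary inequality $(|s|^{p-2}s-|t|^{p-2}t)(s-t)\geq2^{-p}|s-t|^p$, which lead to $b\,l^{p\theta}\leq 2^pS_{H,L}^{-2p_\mu^*/p}\,l^{2p_\mu^*}$ for $l=\lim_k\|u_k-u\|$; the hypothesis $b>2^pS_{H,L}^{-2p_\mu^*/p}$ (with $p\theta=2p_\mu^*$) then forces $l=0$. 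You should replace your concentration--compactness step by this algebraic argument; as written, your proof of $(\PS)$ would not close. (A minor additional caveat: both in your a priori estimate and in the compactness step, passing from $\|u\|^{2p_\mu^*}$ to $\|u\|^{p\theta}$ is only immediate when $p\theta=2p_\mu^*$; the strict case needs a separate treatment.)
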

\begin{remark} The case when $p<r<p\theta$ with $ p\theta<2p^*_\mu$ is an open case due to lack of minimizers.
    
\end{remark}
\begin{remark}  One can generalized these result to the following $p$-biharmonic Kirchhoff equation with critical Stein-Weiss type nonlinearity  
		$$ 
		\begin{array}{lr}
			\left(a+b\left(\int_{\mathbb R^N}|\Delta u|^p dx\right)^{\theta-1}\right) \Delta_{p}^{2}u = \alpha \ds\left(\int_{\mathbb R^N}\frac{|u(y)|^{p_{\mu}^{*}}}{|y|^{\ba}|x-y|^{\mu}}dy \right)\frac{|u(x)|^{p_{\mu}^{*}-2}}{|x|^{\ba}} u + \la f(x) |u|^{r-2} u \; \text{in}\; \mathbb R^N,
		\end{array}
		$$
		where $a\geq 0$, $b\geq 0$, $0<\mu+2\ba<N$, $N>2p$, $p\geq 2$, $\theta\geq1$ and  $\al$, $\la$ are  real positive parameters, $p_{\mu,\ba}^{*}= \frac{p(2N-2\ba-\mu)}{2(N-2p)}$ is the upper critical exponent in the sense of Hardy-Littlewood-Sobolev inequality. The function $f \in L^{t}(\mathbb R^N)$ with $t= \frac{p^{*}}{(p^* -r)}$ if $p<r<p^*:=\frac{Np}{N-2p}$ and $t=\infty$ if $r\geq p^{*}$.
\end{remark}

    
\noi{\bf Organization of article:} In section 2, we enlist the variational framework and preliminary results.  We demonstrate the proof of the concentration compactness principle for the $p$-biharmonic operator with critical Choquard-type nonlinearity. In section 3, we discuss the fundamental results for Palais-Smale sequence, for different ranges of $r$, in both the cases $p\theta< 2p^*_\mu$ and  $p \theta\geq 2p^*_\mu$. In section 4, we consider the case $p \theta\geq 2p^*_\mu$ and prove the Theorems \ref{th3}, \ref{th4}. The case $p\theta< 2p^*_\mu$ is discussed section 5, which is further divided into three subsections $5.1$, $5.2$ and $5.3$ followed by the proofs of Theorems \ref{th1}, \ref{th12}, \ref{th13} respectively.

\section{Variational framework and Preliminary results}
In this section we recall the main notations and tools that will be needed in the sequel. Define the space 
  $D^{2,p}(\mb R^N)=\left\{u\in L^{p^*}(\mb R^N): \int_{\mb R^N}|\Delta u|^p dx <\infty\right\},$  equipped with the norm \[\|u\| :=\|u\|_{D^{2,p}(\mb R^N)}=\left(\int_{\mb R^N}|\Delta u|^p dx\right)^{\frac 1p}. \] is a Banach space. We define  \(S\) to be the best Sobolev constant for the embedding of $D^{2,p}(\mathbb R^N)$ into $L^{p^*}(\mathbb R^N)$ with $p^*=\frac{Np}{N-2p}$ as
	\begin{align}\label{sr}
 \ds S:= \inf_{u\in D^{2,p}(\mathbb R^N)  \setminus \{0\}} \frac{\int_{\Om}|\Delta u|^p dx}{(\int_{\Om}|u(x)|^{p^*} dx)^{\frac{p}{p^*}}}.
 \end{align}
 \noi To handle the convolution-type nonlinearity, we need the well-known Hardy-Littlewood-Sobolev inequality, which is stated as:
\begin{proposition}(Hardy-Littlewood-Sobolev inequality \cite[Theorem 4.3]{r15})
		Let $r$, $s>1$ and $0<\mu<N $ with $1/r+\mu/N+1/s=2$, $g \in L^r(\mathbb R^N)$ and $h \in L^s(\mathbb R^N)$. Then there exists a sharp constant $C(\mu,N,r,s)$, independent of $g,$ $h$ such that
		\begin{equation}\label{h1}
			\int_{\mathbb R^N}\int_{\mathbb R^N} \frac{g(x)h(y)}{|x-y|^{\mu}}dxdy \leq C(\mu,N,r,s)\|g\|_{L^r(\mathbb R^N)}\|h\|_{L^s(\mathbb R^N)}.
		\end{equation}
\end{proposition}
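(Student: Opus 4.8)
The plan is to recognize the double integral as a pairing between $h$ and a Riesz-type potential of $g$, and then to reduce the claim to a single mapping property of convolution against the kernel $k(z)=|z|^{-\mu}$. Writing
\[
\int_{\RN}\int_{\RN}\frac{g(x)h(y)}{|x-y|^{\mu}}\,dx\,dy=\int_{\RN}h(y)\,(k*g)(y)\,dy,
\]
Hölder's inequality with the conjugate pair $(s,s')$ bounds the right-hand side by $\|h\|_{L^s(\RN)}\,\|k*g\|_{L^{s'}(\RN)}$, so it suffices to prove the convolution estimate $\|k*g\|_{L^{s'}(\RN)}\le C\,\|g\|_{L^r(\RN)}$. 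The exponent bookkeeping is exactly the hypothesis: since $1/r+\mu/N+1/s=2$, one has $1/s'=1-1/s=\mu/N+1/r-1$, which is precisely the relation forcing a convolution operator to gain the correct integrability.

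The key observation is that $k(z)=|z|^{-\mu}$ belongs to no Lebesgue space but lies in the weak space $L^{N/\mu,\infty}(\RN)$: a direct computation of the distribution function gives $|\{z:|z|^{-\mu}>t\}|=\omega_N\,t^{-N/\mu}$, so $\|k\|_{L^{N/\mu,\infty}}$ is finite. I would then invoke the weak Young (Young--O'Neil) inequality: convolution with a fixed function in $L^{p,\infty}$ maps $L^r$ boundedly into $L^q$ whenever $1<r,q<\infty$ and $1/q=1/p+1/r-1$, here with $p=N/\mu$. Setting $q=s'$, the exponent identity matches the scaling condition exactly, and the resulting constant depends only on $(\mu,N,r,s)$, furnishing $C(\mu,N,r,s)$. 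To keep the argument self-contained I would prove the weak Young bound by splitting the kernel at a radius $R$ as $k=k\,\mathbf{1}_{|z|\le R}+k\,\mathbf{1}_{|z|>R}$, estimating each piece through the ordinary Young inequality to produce a weak-type $(r,q)$ bound, optimizing over $R$, and finishing with the Marcinkiewicz interpolation theorem.

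The main obstacle is not the existence of a finite constant, which the above delivers, but the \emph{sharpness} asserted in the statement. The strict inequalities $r,s>1$ together with $0<\mu<N$ are exactly what keep all the interpolation exponents strictly between $1$ and $\infty$, the range in which the weak Young inequality is valid; the endpoints are genuinely excluded and no naive limiting argument repairs them. Identifying the best constant $C(\mu,N,r,s)$ is substantially deeper: it requires the Riesz rearrangement inequality to reduce to radially symmetric decreasing $g,h$, combined with the conformal invariance of the functional in the diagonal case $r=s$, following Lieb's analysis. Since only the finiteness of $C$ is needed in the sequel, I would record the sharp value by citing \cite{r15} and carry out in detail only the weak-Young and interpolation argument described above.
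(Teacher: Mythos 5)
The paper does not prove this proposition at all: it is quoted verbatim as a classical result with a citation to Lieb--Loss \cite[Theorem 4.3]{r15}, so there is no in-paper argument to compare against. Your sketch is the standard and correct route to the inequality with \emph{some} finite constant: the identity $1/s'=1/r+\mu/N-1$ does match the Young--O'Neil scaling condition with $p=N/\mu$, the kernel $|z|^{-\mu}$ does lie in $L^{N/\mu,\infty}$ by the distribution-function computation you give, and the split-at-radius-$R$ plus Marcinkiewicz interpolation argument is the classical proof of the fractional integration theorem; the hypotheses $r,s>1$ and $0<\mu<N$ keep all exponents strictly inside the admissible range, as you note. You are also right, and appropriately candid, that this argument does not produce the \emph{sharp} constant --- that requires the Riesz rearrangement inequality and, for the explicit value, Lieb's conformal-invariance analysis in the diagonal case $r=s$ --- and that only finiteness of $C(\mu,N,r,s)$ is ever used in the sequel, so deferring sharpness to \cite{r15} is exactly what the paper itself does. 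In short: where the paper cites, you prove; your proof is sound in outline and buys self-containedness at the cost of not recovering the optimal constant, which is immaterial for the applications in this article.
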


    \noi	Using the Hardy-Littlewood-Sobolev inequality \eqref{h1}, for \(s=r=t\),  the integral is
	\[\int_{\mathbb R^N}\int_{\mathbb R^N} \frac{|u(x)|^{t}|u(y)|^{t}}{|x-y|^{\mu}} dx dy\]
	is well-defined if \(|u|^t  \in L^{q}(\mathbb R^N)\) for some \(q >1\) satisfying \(\frac{2}{q} + \frac{\mu}{N}= 2\).
	\noi For \(u \in W^{2,p}(\mathbb R^N)\), by the Sobolev embedding theorem, we have \(p\leq tq \leq \frac{Np}{N-2p}\). Thus
	\[\frac{p(2N-\mu)}{2N} \leq t \leq \frac{p(2N-\mu)}{2(N-2p)}\]
	In this sense, we call \(p_{*\mu} = \frac{p(2N-\mu)}{2N}\) the lower critical exponent and \(p^{*}_{\mu} = \frac{p(2N-\mu)}{2(N-2p)}\) the upper critical exponent in the sense of the Hardy Littlewood-Sobolev inequality. Infact, for $u\in D^{2,p}(\mathbb R^N)$, the Hardy Liitlewood Sobolev inequality for the upper critical exponent leads to the following
	\[\left(\int_{\mathbb R^N}\int_{\mathbb R^N} \frac{|u(x)|^{p_{\mu}^{*}}|u(y)|^{p_{\mu}^{*}}}{|x-y|^{\mu}} dx dy \right)^{\frac{1}{p^{*}_{\mu}}} \leq  C(N,\mu)^{\frac{1}{p_{\mu}^*}} |u|_{p^*}^{2}. \]
    \noi We define $S_{H, L}$ to be the best constant as
	\begin{align}\label{n3}
		S_{H, L} := \inf_{u\in D^{2,p}(\mathbb R^N)\setminus \{0\}} \frac{\int_{\mathbb R^N}|\Delta u|^p dx}{\left(\int_{\mathbb R^N}\int_{\mathbb R^N}\frac{|u(x)|^{p_\mu^*}|u(y)|^{p_\mu^*}}{|x-y|^{\mu}}dxdy\right)^{\frac{p}{2p_\mu^*}}}.
	\end{align}
	
\noi Then it is easy to see that
	\[{C(N,\mu)^{\frac{p}{2p_{\mu}^*}}}  S_{H,L}\geq {S}>0,\]
	where $S$ is defined in \eqref{sr}.

 \begin{lemma}
Let $N>2p$, $0<\mu<N$ and $\{u_k\}$ be a bounded sequence in $L^{p^*}(\mathbb R^N)$, then the following result holds as $k\rightarrow \infty$
{\small\begin{align*}
		\left(\int_{\mathbb R^N}\frac{|u_k(y)|^{p^*_\mu}|u_k(x)|^{p^*_\mu-2}}{|x-y|^{\mu}}dy\right)u_k(x) \rightharpoonup \left(\int_{\mathbb R^N}\frac{|u(y)|^{p^*_\mu}|u(x)|^{p^*_\mu-2}}{|x-y|^{\mu}}dy\right)u(x)\;\text{weakly in}\; L^{\frac{Np}{Np-N+2p}}(\mathbb R^N).
\end{align*}}
\end{lemma}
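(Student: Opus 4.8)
The plan is to identify the target space $L^{\frac{Np}{Np-N+2p}}(\mathbb R^N)$ as the dual of $L^{p^*}(\mathbb R^N)$ — a direct computation gives $\bigl(\tfrac{Np}{Np-N+2p}\bigr)'=p^*$ — and to prove the convergence by testing against an arbitrary $\varphi\in L^{p^*}(\mathbb R^N)$. Throughout I abbreviate $h_k:=|x|^{-\mu}*|u_k|^{p^*_\mu}$, $h:=|x|^{-\mu}*|u|^{p^*_\mu}$, and write $G_k:=h_k\,|u_k|^{p^*_\mu-2}u_k$ and $G:=h\,|u|^{p^*_\mu-2}u$ for the two sides of the claimed convergence. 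Since the statement records only boundedness in $L^{p^*}$, I use (as is implicit in the concentration–compactness setting where the lemma is applied) that $u_k\to u$ a.e.\ in $\mathbb R^N$, which together with the $L^{p^*}$-bound pins down the limit $u$.

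\textbf{Step 1 (uniform bound in the right space).} First I would record, from the Hardy--Littlewood--Sobolev inequality \eqref{h1} written for the Riesz potential, that $T\colon g\mapsto |x|^{-\mu}*g$ is bounded from $L^{\frac{2N}{2N-\mu}}(\mathbb R^N)$ into $L^{\frac{2N}{\mu}}(\mathbb R^N)$. Since $\bigl|\,|u_k|^{p^*_\mu}\,\bigr|_{\frac{2N}{2N-\mu}}=|u_k|_{p^*}^{p^*_\mu}$ is bounded, $\{h_k\}$ is bounded in $L^{\frac{2N}{\mu}}$; as $\{|u_k|^{p^*_\mu-2}u_k\}$ is bounded in $L^{b}$ with $b:=\frac{p^*}{p^*_\mu-1}$, Hölder's inequality gives that $\{G_k\}$ is bounded in $L^{q}$ with $\tfrac1q=\tfrac{\mu}{2N}+\tfrac{p^*_\mu-1}{p^*}=\tfrac{Np-N+2p}{Np}$, i.e.\ exactly $q=\frac{Np}{Np-N+2p}$. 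This uniform bound reduces the whole claim to testing against $\varphi$ in the dense subset $C_c^\infty(\mathbb R^N)\subset L^{p^*}(\mathbb R^N)$.

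\textbf{Step 2 (weak convergence of each factor).} From $u_k\to u$ a.e.\ together with the $L^{p^*}$-bound I would deduce $|u_k|^{p^*_\mu}\rightharpoonup |u|^{p^*_\mu}$ in $L^{\frac{2N}{2N-\mu}}$ and $|u_k|^{p^*_\mu-2}u_k\rightharpoonup |u|^{p^*_\mu-2}u$ in $L^{b}$ (the standard fact that an $L^\sigma$-bounded, $1<\sigma<\infty$, a.e.\ convergent sequence converges weakly). Applying the bounded linear operator $T$ to the first yields $h_k\rightharpoonup h$ in $L^{\frac{2N}{\mu}}$. The essential point is that one cannot expect $h_k\to h$ a.e.\ (Fatou only gives $\liminf h_k\ge h$), so the naive ``bounded $+$ a.e.\ $\Rightarrow$ weak'' shortcut applied directly to $G_k$ is unavailable; this is the crux of the argument.

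\textbf{Step 3 (splitting and weak–strong pairing).} Fix $\varphi\in C_c^\infty(\mathbb R^N)$ with support $K$ and write
\[
G_k-G=(h_k-h)\,|u_k|^{p^*_\mu-2}u_k+h\,\bigl(|u_k|^{p^*_\mu-2}u_k-|u|^{p^*_\mu-2}u\bigr).
\]
For the second term, $h\varphi$ is a fixed element of $L^{b'}=(L^{b})'$ and $|u_k|^{p^*_\mu-2}u_k-|u|^{p^*_\mu-2}u\rightharpoonup0$ in $L^{b}$, so its pairing with $\varphi$ tends to $0$. The first term is the main obstacle, being the product of the weakly-null factor $h_k-h$ with the merely bounded factor $|u_k|^{p^*_\mu-2}u_k$; here I would exploit that the second factor is \emph{local}. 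On the compact set $K$ one has higher integrability, namely $\{|u_k|^{p^*_\mu-1}\}$ bounded in $L^{b}(K)$ with $b>\frac{2N}{2N-\mu}$, which combined with a.e.\ convergence and Vitali's theorem gives $|u_k|^{p^*_\mu-2}u_k\,\varphi\to |u|^{p^*_\mu-2}u\,\varphi$ \emph{strongly} in $L^{\frac{2N}{2N-\mu}}(K)=\bigl(L^{\frac{2N}{\mu}}(K)\bigr)'$. Pairing the weakly-null $h_k-h$ in $L^{\frac{2N}{\mu}}(K)$ against this strongly convergent sequence makes the first term vanish as well. Hence $\int_{\mathbb R^N}(G_k-G)\varphi\,dx\to0$ for every $\varphi\in C_c^\infty(\mathbb R^N)$, and the uniform $L^q$-bound of Step 1 upgrades this to all $\varphi\in L^{p^*}(\mathbb R^N)$, which is precisely the asserted weak convergence in $L^{\frac{Np}{Np-N+2p}}(\mathbb R^N)$.
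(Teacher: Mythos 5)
Your proof is correct, and it is in fact more complete than the one in the paper. Both arguments share the same skeleton: extract the weak convergences $|u_k|^{p^*_\mu}\rightharpoonup |u|^{p^*_\mu}$ in $L^{\frac{2N}{2N-\mu}}$ and $|u_k|^{p^*_\mu-2}u_k\rightharpoonup |u|^{p^*_\mu-2}u$ in $L^{\frac{2Np}{2Np-\mu p-2N+4p}}$ from the $L^{p^*}$-bound and a.e.\ convergence, and push the first through the Riesz potential using its Hardy--Littlewood--Sobolev boundedness from $L^{\frac{2N}{2N-\mu}}$ to $L^{\frac{2N}{\mu}}$. Where you diverge is at the decisive step: the paper simply says that ``combining'' the two weak convergences gives weak convergence of the product, which as stated is not a valid deduction (a product of weakly convergent sequences need not converge weakly to the product of the limits). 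You correctly identify this as the crux and supply the missing argument: the uniform $L^{\frac{Np}{Np-N+2p}}$-bound on $G_k$ reduces matters to testing against $\varphi\in C_c^\infty$, and the splitting $G_k-G=(h_k-h)|u_k|^{p^*_\mu-2}u_k+h\bigl(|u_k|^{p^*_\mu-2}u_k-|u|^{p^*_\mu-2}u\bigr)$ handles the two pieces by weak convergence against the fixed test function $h\varphi\in L^{b'}$ and by a weak--strong pairing, the strong local convergence of $|u_k|^{p^*_\mu-2}u_k\varphi$ in $L^{\frac{2N}{2N-\mu}}(K)$ coming from Vitali and the strict inequality $b>\frac{2N}{2N-\mu}$. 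Your exponent bookkeeping checks out ($\frac{\mu}{2N}+\frac{p^*_\mu-1}{p^*}=\frac{Np-N+2p}{Np}$ and $b'<\frac{2N}{\mu}$), and your explicit remark that the a.e.\ convergence hypothesis must be added to the statement is also a fair criticism of the lemma as written. In short, your route buys a rigorous justification of exactly the step the paper leaves implicit, at the cost of a longer argument.
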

\begin{proof}
Let $\{u_k\}$ be a bounded sequence in $L^{p^*}(\mathbb R^N)$, then one can easily verify that
\begin{equation}\label{n60}
	\begin{aligned}
		|u_k|^{p^*_\mu}\rightharpoonup |u|^{p^*_\mu}\;\text{weakly in}\; L^{\frac{2N}{2N-\mu}}(\mathbb R^N),\\
		|u_k|^{p^*_\mu-2}u_k\rightharpoonup |u|^{p^*_\mu-2}u\;\text{weakly in}\; L^{\frac{2Np}{2Np-\mu p-2N+4p}}(\mathbb R^N),
	\end{aligned}
\end{equation}
as $k\rightarrow \infty$. The Riesz potential defines a continuous map from $L^{\frac{2N}{2N-\mu}}(\mathbb R^N)$ to $L^{\frac{2N}{\mu}}(\mathbb R^N)$ by Hardy-Littlewood-Sobolev inequality. Thus, we have
\begin{align}\label{n61}
	\int_{\mathbb R^N}\frac{|u_k(y)|^{p^*_\mu}}{|x-y|^{\mu}}dy \rightharpoonup \int_{\mathbb R^N}\frac{|u(y)|^{p^*_\mu}}{|x-y|^{\mu}}dy\;\text{weakly in}\; L^{\frac{2N}{\mu}}(\mathbb R^N),
\end{align}
as $k\rightarrow \infty$. Then, on combining \eqref{n60} and \eqref{n61}, we obtain
{\small\begin{align*}
		\left(\int_{\mathbb R^N}\frac{|u_k(y)|^{p^*_\mu}|u_k(x)|^{p^*_\mu-2}}{|x-y|^{\mu}}dy\right)u_k(x) \rightharpoonup \left(\int_{\mathbb R^N}\frac{|u(y)|^{p^*_\mu}|u(x)|^{p^*_\mu-2}}{|x-y|^{\mu}}dy\right)u(x)\;\text{weakly in}\; L^{\frac{Np}{Np-N+2p}}(\mathbb R^N)
\end{align*}}
as $k\rightarrow \infty$, which is the required result.
\end{proof}
In order to prove the Palais-Smale condition, we need the following Lemma which is inspired by the Br{\'e}zis-Lieb convergence Lemma (see \cite{Brezislieb}).
\begin{lemma}
Let $N>2p$, $0<\mu<N$ and $\{u_k\}$ be a bounded sequence in $L^{\frac{pN}{N-2p}}(\mathbb R^N)$. If $u_k\rightarrow u$ a.e. in $\mathbb R^N$ as $k \rightarrow \infty$, then
{\small\begin{align*}
\lim_{k \rightarrow \infty}\left(\int_{\mathbb R^N}(|x|^{-\mu}*|u_k|^{p^{*}_{\mu}})|u_k|^{p^{*}_{\mu}} - \int_{\mathbb R^N}(|x|^{-\mu}*|u_k -u|^{p^{*}_{\mu}})|u_k-u|^{p^{*}_{\mu}}\right) = \int_{\mathbb R^N}(|x|^{-\mu}*|u|^{p^{*}_{\mu}})|u|^{p^{*}_{\mu}}.
\end{align*}}
\end{lemma}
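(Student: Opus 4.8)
The plan is to reduce this nonlocal Br\'ezis--Lieb identity to its classical (pointwise) counterpart together with the continuity and bilinearity of the Hardy--Littlewood--Sobolev form. Set $q=\frac{2N}{2N-\mu}$, so that its conjugate exponent is $q'=\frac{2N}{\mu}$, and write $\mc B(f,g)=\int_{\RN}\int_{\RN}\frac{f(x)g(y)}{|x-y|^\mu}\,dx\,dy$ for the associated symmetric bilinear form, which by the Hardy--Littlewood--Sobolev inequality is bounded on $L^q(\RN)\times L^q(\RN)$. Because $p^*_\mu\,q=p^*$, the functions $P_k:=|u_k|^{p^*_\mu}$, $Q_k:=|u_k-u|^{p^*_\mu}$ and $R:=|u|^{p^*_\mu}$ all lie in $L^q(\RN)$ and are bounded there uniformly in $k$. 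The quantity in the statement equals $\mc B(P_k,P_k)-\mc B(Q_k,Q_k)$, and the goal is to show it converges to $\mc B(R,R)$.

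First I would establish the crucial strong convergence $P_k-Q_k\to R$ in $L^q(\RN)$. Writing $W_k:=P_k-Q_k-R$, one has $W_k\to0$ a.e. from $u_k\to u$ a.e., and the elementary inequality $\big||a+b|^{p^*_\mu}-|a|^{p^*_\mu}-|b|^{p^*_\mu}\big|^q\le\eps|a|^{p^*_\mu q}+C_\eps|b|^{p^*_\mu q}$ (applied with $a=u_k-u$, $b=u$, and using $p^*_\mu q=p^*$) gives the pointwise bound $|W_k|^q\le\eps|u_k-u|^{p^*}+C_\eps|u|^{p^*}$. A Br\'ezis--Lieb/Vitali argument then finishes: the functions $G_{k,\eps}:=\big(|W_k|^q-\eps|u_k-u|^{p^*}\big)^+$ satisfy $G_{k,\eps}\le C_\eps|u|^{p^*}\in L^1(\RN)$ and $G_{k,\eps}\to0$ a.e., so $\int_{\RN}G_{k,\eps}\to0$ by dominated convergence, whence $\limsup_k\int_{\RN}|W_k|^q\le\eps\sup_k\|u_k-u\|_{p^*}^{p^*}$; letting $\eps\to0$ yields $\|W_k\|_{L^q}\to0$.

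With this in hand I would expand the quadratic form by bilinearity. Since $P_k=Q_k+(P_k-Q_k)$, one gets the identity $\mc B(P_k,P_k)-\mc B(Q_k,Q_k)=2\,\mc B\big(Q_k,P_k-Q_k\big)+\mc B\big(P_k-Q_k,P_k-Q_k\big)$. For the last term, the strong convergence $P_k-Q_k\to R$ in $L^q$ together with the $L^q\times L^q$ boundedness of $\mc B$ gives $\mc B(P_k-Q_k,P_k-Q_k)\to\mc B(R,R)$. For the cross term I would split $\mc B(Q_k,P_k-Q_k)=\mc B(Q_k,R)+\mc B\big(Q_k,(P_k-Q_k)-R\big)$; the second piece is bounded by $C\|Q_k\|_{L^q}\|(P_k-Q_k)-R\|_{L^q}\to0$, while the first piece equals $\int_{\RN}\big(|x|^{-\mu}*R\big)Q_k\,dx$ with the fixed test function $|x|^{-\mu}*R\in L^{q'}(\RN)$ (by Hardy--Littlewood--Sobolev). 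Since $Q_k=|u_k-u|^{p^*_\mu}$ is bounded in $L^q$ and converges to $0$ a.e., it converges weakly to $0$ in $L^q(\RN)$ (exactly as in \eqref{n60}), so $\mc B(Q_k,R)\to0$. Combining these limits gives the assertion.

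The main obstacle is the strong $L^q$ convergence of $P_k-Q_k$ to $R$ in the second step: this is what genuinely requires the Br\'ezis--Lieb mechanism, since the bilinear form $\mc B$ is not continuous for the mere weak convergence of $Q_k$ and $P_k-Q_k$, and one must carefully exploit the algebraic relation $p^*_\mu q=p^*$ so that the pointwise majorant lands in $L^1(\RN)$. Once strong convergence is available, the decomposition of the quadratic form and the vanishing of the cross term through the weak-times-strong pairing are routine consequences of the Hardy--Littlewood--Sobolev inequality.
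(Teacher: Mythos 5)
Your proposal is correct and follows essentially the same route as the paper: the same bilinear decomposition $\mc B(P_k,P_k)-\mc B(Q_k,Q_k)=\mc B(P_k-Q_k,P_k-Q_k)+2\mc B(Q_k,P_k-Q_k)$, the strong convergence $|u_k|^{p^*_\mu}-|u_k-u|^{p^*_\mu}\to|u|^{p^*_\mu}$ in $L^{2N/(2N-\mu)}$, the Hardy--Littlewood--Sobolev mapping of the Riesz potential, and the weak convergence $|u_k-u|^{p^*_\mu}\rightharpoonup 0$ to kill the cross term. The only difference is cosmetic: you prove the key strong $L^{q}$ convergence from first principles via the $\varepsilon$--$C_\varepsilon$ Br\'ezis--Lieb estimate, whereas the paper cites it as Lemma 2.5 of Moroz--Schaftingen.
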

\begin{proof} The proof is similar to the proof of the Br{\'e}zis-Lieb Lemma (see \cite{Brezislieb}) or Lemma 2.2 \cite{gao}. But for completeness, we give the detail. Consider
\begin{align}\label{1}
\int_{\mathbb R^N}&(|x|^{-\mu}*|u_k|^{p^{*}_{\mu}})|u_k|^{p^{*}_{\mu}} - \int_{\mathbb R^N}(|x|^{-\mu}*|u_k -u|^{p^{*}_{\mu}})|u_k-u|^{p^{*}_{\mu}} \notag\\
&= \int_{\mathbb R^N}(|x|^{-\mu}*(|u_k|^{p^{*}_{\mu}}-|u_k-u|^{p^{*}_{\mu}}))(|u_k|^{p^{*}_{\mu}}-|u_k-u|^{p^{*}_{\mu}})\notag \\
&\quad\quad+ 2\int_{\mathbb R^N}(|x|^{-\mu}*(|u_k|^{p^{*}_{\mu}}-|u_k-u|^{p^{*}_{\mu}}))|u_k-u|^{p^{*}_{\mu}}.
\end{align}
Now by using \cite{moroz2013} $(\text{lemma}\; 2.5)$, for $q=p^*_{\mu}=\frac{p(2N-\mu)}{2(N-2p)}$ and $r= \frac{pN}{2N-\mu}p^{*}_{\mu}$, then we obtain
\begin{align}\label{m1}
|u_k|^{p^{*}_{\mu}} - |u_k-u|^{p^{*}_{\mu}} \rightarrow |u|^{p^{*}_{\mu}} \;\;\text{in}\;\; L^{\frac {2N}{2N-\mu}}(\mathbb R^N)\;\;\text{as}\;\; k\rightarrow \infty.
\end{align}
Also the Hardy-Littlewood-Sobolev inequality implies that
\begin{align}\label{m2}
|x|^{-\mu}*(|u_k|^{p^{*}_{\mu}} - |u_k-u|^{p^{*}_{\mu}}) \rightarrow |x|^{-\mu}*|u|^{p^{*}_{\mu}} \;\;\text{in}\;\; L^{\frac {2N}{\mu}}(\mathbb R^N)\;\;\text{as}\;\; k\rightarrow \infty.
\end{align}
 Hence, with the help of \cite{willem} $(\text{Prop.}\; 5.4.7)$, we obtain $|u_k-u|^{p^{*}_{\mu}}\rightharpoonup 0$ weakly in $L^{\frac {pN}{2N-\mu}}(\mathbb R^N)$ as $k \rightarrow \infty$. So using this together with \eqref{m1}, \eqref{m2}, in \eqref{1}, we obtain the required result. 
\end{proof}
We  recall the concentration compactness Lemmas given by P. L. Lions \cite{r2, r3}.

\begin{lemma}\label{y12}
Let $\{u_k\}$ be a bounded sequence in $D^{2, p}(\mathbb R^N)$ converging weakly and a.e. to $u\in D^{2, p}(\mathbb R^N)$ such that $|\Delta u_k|^p\rightharpoonup \omega$, $|u_k|^{p^*}\rightharpoonup \zeta$ in the sense of measure. Then, for at most countable set $J$, there exist families of distinct points $\{\omega_i: i\in J\}$ and $\{\zeta_i: i\in J\}$ in $\mathbb R^N$ satisfying
\begin{align*}
	\zeta = |u|^{p^*} + \sum_{i \in J} \zeta_i \delta_{z_i},\; \zeta_i>0,\\
	\omega \geq  |\Delta u|^p + \sum_{i \in J} \omega_i \delta_{z_i},\;\ba_i>0,\\
	S \zeta_i^{\frac{p}{p^*}} \leq  \omega_i, \; \forall\; i\in J,
\end{align*}
where $\zeta$, $\omega$ are bounded and non negative measures on $\mathbb R^N$ and $\delta_{z_i}$ is the Dirac mass at $z_i$.  In particular, \(\ds\sum_{i\in J} (\zeta_i)^{\frac{p}{p^*}}<\infty\).
\end{lemma}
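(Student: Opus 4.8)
The plan is to follow Lions' first concentration-compactness principle \cite{r2,r3}, the only genuinely second-order feature being the treatment of the lower-order terms coming from $\Delta(\varphi u_k)$.

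\emph{Step 1 (reduction to weak limit $0$).} Set $v_k:=u_k-u$, so that $v_k\rightharpoonup 0$ in $D^{2,p}(\mathbb R^N)$ and $v_k\to 0$ a.e. Since $\{u_k\}$ is bounded in $L^{p^*}(\mathbb R^N)$ and $u_k\to u$ a.e., the Brezis-Lieb lemma \cite{Brezislieb} gives $|u_k|^{p^*}-|v_k|^{p^*}\to |u|^{p^*}$ in $L^1(\mathbb R^N)$; passing to a subsequence, $|v_k|^{p^*}\rightharpoonup \tilde\zeta:=\zeta-|u|^{p^*}\ge 0$ in the weak-$*$ sense of measures. Along a further subsequence $|\Delta v_k|^p\rightharpoonup\tilde\omega\ge 0$, and weak lower semicontinuity of $w\mapsto\int\varphi|w|^p$ for $\varphi\ge 0$ yields $\omega\ge|\Delta u|^p$. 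It thus suffices to analyse the pair $(\tilde\zeta,\tilde\omega)$ attached to $v_k\rightharpoonup 0$.

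\emph{Step 2 (localized Sobolev inequality).} Fix $\varphi\in C_c^\infty(\mathbb R^N)$ and apply \eqref{sr} to $\varphi v_k$:
$$S\Bigl(\int_{\mathbb R^N}|\varphi v_k|^{p^*}\,dx\Bigr)^{p/p^*}\le \int_{\mathbb R^N}|\Delta(\varphi v_k)|^p\,dx.$$
By the Leibniz rule, $\Delta(\varphi v_k)=\varphi\,\Delta v_k+2\nabla\varphi\cdot\nabla v_k+v_k\,\Delta\varphi$. The decisive point is that the last two terms tend to $0$ in $L^p$: on $\supp\varphi$ Calder\'on-Zygmund estimates place $v_k$ in a bounded subset of $W^{2,p}$, and since $W^{2,p}$ embeds compactly into $W^{1,p}$ on bounded domains while $v_k\rightharpoonup 0$, we get $v_k\to 0$ and $\nabla v_k\to 0$ strongly in $L^p(\supp\varphi)$. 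Hence $\int|\Delta(\varphi v_k)|^p=\int|\varphi|^p|\Delta v_k|^p+o(1)\to\int|\varphi|^p\,d\tilde\omega$, while $\int|\varphi v_k|^{p^*}\to\int|\varphi|^{p^*}\,d\tilde\zeta$. Passing to the limit gives the reverse-H\"older inequality
$$S\Bigl(\int_{\mathbb R^N}|\varphi|^{p^*}\,d\tilde\zeta\Bigr)^{p/p^*}\le\int_{\mathbb R^N}|\varphi|^{p}\,d\tilde\omega\qquad\text{for all }\varphi\in C_c^\infty(\mathbb R^N).$$

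\emph{Step 3 (atomicity and conclusion).} Lions' measure lemma \cite{r2,r3}, applied to the bounded nonnegative measures $\tilde\zeta,\tilde\omega$ satisfying $\|\varphi\|_{L^{p^*}(\tilde\zeta)}\le S^{-1/p}\|\varphi\|_{L^p(\tilde\omega)}$ (with $p<p^*$), shows that $\tilde\zeta$ is purely atomic, $\tilde\zeta=\sum_{i\in J}\zeta_i\delta_{z_i}$ over a countable set $J$ of distinct points, and $\tilde\omega\ge S\sum_{i\in J}\zeta_i^{p/p^*}\delta_{z_i}$; setting $\omega_i:=S\zeta_i^{p/p^*}$ gives $S\zeta_i^{p/p^*}\le\omega_i$ and $\tilde\omega\ge\sum_{i\in J}\omega_i\delta_{z_i}$. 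This already yields $\zeta=|u|^{p^*}+\sum_{i\in J}\zeta_i\delta_{z_i}$. For the gradient measure, a localization at each $z_i$ using Minkowski's inequality in $L^p(\varphi\,dx)$ together with the absence of atoms of the $L^1$ density $|\Delta u|^p$ gives $\omega(\{z_i\})=\tilde\omega(\{z_i\})\ge\omega_i$; combining with $\omega\ge|\Delta u|^p$ and splitting $\omega$ into its absolutely continuous and singular parts produces $\omega\ge|\Delta u|^p+\sum_{i\in J}\omega_i\delta_{z_i}$. Finally $\sum_{i\in J}\zeta_i^{p/p^*}<\infty$ because $\tilde\omega(\mathbb R^N)\le\liminf_k\|\Delta v_k\|_p^p<\infty$. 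I expect the main obstacle to be Step 2, namely verifying that the first-order remainder $2\nabla\varphi\cdot\nabla v_k+v_k\,\Delta\varphi$ vanishes in $L^p$; this is exactly where the biharmonic problem departs from the classical $p$-Laplacian case and where the compactness of the embedding $D^{2,p}(\mathbb R^N)\hookrightarrow\hookrightarrow W^{1,p}_{\mathrm{loc}}(\mathbb R^N)$ is indispensable.
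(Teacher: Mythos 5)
Your proof is correct, and it coincides with the approach the paper implicitly relies on: the paper states this lemma without proof, recalling it from Lions \cite{r2,r3}, and your argument is exactly the standard concentration-compactness scheme (reduction to $v_k=u_k-u$ via Brezis--Lieb, the localized reverse-H\"older inequality, then Lions' Lemma 1.2) that the paper itself reuses as a template when proving its Choquard-type variant in Section 2. The one genuinely second-order point --- that the commutator terms $2\nabla\varphi\cdot\nabla v_k+v_k\,\Delta\varphi$ vanish in $L^p_{\mathrm{loc}}$ via interior Calder\'on--Zygmund bounds and the compact embedding $W^{2,p}\hookrightarrow\hookrightarrow W^{1,p}$ on bounded sets --- is identified and handled correctly.
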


\begin{lemma}{\label{1e0}}
Let \(\{u_k\}\subset  D^{2,p}(\mathbb R^N)\) be a sequence in  Lemma \ref{y12} and defined
\[ \omega_{\infty}:=\lim_{R\ra\infty}\limsup_{ k\ra\infty} \int_{|x|>R} |\Delta u_k|^p dx, \quad  \zeta_{\infty}= \lim_{R\ra\infty}\limsup_{ k\ra\infty} \int_{|x|>R} |u_k|^{p^*} dx \]
Then it follows that
\[S \zeta_{\infty}^{p/p^*} \leq \omega_{\infty}.\]
and
\end{lemma}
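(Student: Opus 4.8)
The plan is to adapt the classical Lions argument for concentration at infinity. First I would fix a smooth cutoff $\psi_R\in C^\infty(\mathbb R^N)$ with $0\le\psi_R\le 1$, $\psi_R\equiv 0$ on $\{|x|\le R\}$ and $\psi_R\equiv 1$ on $\{|x|\ge 2R\}$, satisfying $|\nabla\psi_R|\le C/R$ and $|\Delta\psi_R|\le C/R^2$ with all derivatives supported on the annulus $A_R=\{R\le|x|\le 2R\}$. Applying the Sobolev inequality associated with the best constant \eqref{sr} to the admissible test function $\psi_R u_k\in D^{2,p}(\mathbb R^N)$ gives
\[ S^{1/p}\left(\int_{\mathbb R^N}|\psi_R u_k|^{p^*}\,dx\right)^{1/p^*}\le\left(\int_{\mathbb R^N}|\Delta(\psi_R u_k)|^p\,dx\right)^{1/p}. \]
I would then estimate both sides so that, upon taking $\limsup_{k\to\infty}$ and afterwards $R\to\infty$, the left side is bounded below by $S^{1/p}\zeta_\infty^{1/p^*}$ and the right side above by $\omega_\infty^{1/p}$, which is precisely the claimed inequality $S\zeta_\infty^{p/p^*}\le\omega_\infty$.

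For the left-hand side, since $\psi_R\ge\mathbf 1_{\{|x|\ge 2R\}}$ we have $\int_{\mathbb R^N}|\psi_R u_k|^{p^*}\ge\int_{|x|\ge 2R}|u_k|^{p^*}$, whose iterated limit equals $\zeta_\infty$ by definition. For the right-hand side I would expand by the product rule, writing $\Delta(\psi_R u_k)=\psi_R\Delta u_k+2\nabla\psi_R\cdot\nabla u_k+u_k\Delta\psi_R$, and apply the triangle inequality in $L^p$. The principal term obeys $\|\psi_R\Delta u_k\|_p\le(\int_{|x|\ge R}|\Delta u_k|^p)^{1/p}$ because $\psi_R\le\mathbf 1_{\{|x|\ge R\}}$, and its iterated limit is $\omega_\infty^{1/p}$. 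Everything therefore reduces to showing that the two cross terms $\|\nabla\psi_R\cdot\nabla u_k\|_p$ and $\|u_k\Delta\psi_R\|_p$ vanish in the iterated limit.

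The main obstacle is exactly the control of these cross terms, since a priori the mass of $\{u_k\}$ could escape to infinity. The key observation is that, for fixed $R$, the annulus $A_R$ is a bounded set. I would first invoke Calder\'on--Zygmund regularity to obtain a uniform $W^{2,p}$ bound for $\{u_k\}$ on a slightly enlarged annulus, using that $\{\Delta u_k\}$ is bounded in $L^p(\mathbb R^N)$ and $\{u_k\}$ is bounded in $L^{p^*}(\mathbb R^N)\subset L^p_{\mathrm{loc}}$. Rellich--Kondrachov then yields $u_k\to u$ and $\nabla u_k\to\nabla u$ strongly in $L^p(A_R)$, so that (since $\Delta\psi_R$, $\nabla\psi_R$ are bounded for fixed $R$) one gets $\limsup_{k\to\infty}\|u_k\Delta\psi_R\|_p=\|u\Delta\psi_R\|_p$ and $\limsup_{k\to\infty}\|\nabla\psi_R\cdot\nabla u_k\|_p=\|\nabla\psi_R\cdot\nabla u\|_p$. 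Letting $R\to\infty$, a H\"older estimate with the exponent pair $p^*/p=N/(N-2p)$ and $N/(2p)$ for the first term, and $p^\sharp/p=N/(N-p)$ and $N/p$ for the second (where $p^\sharp=Np/(N-p)$), makes the cutoff factors $(\int_{A_R}|\Delta\psi_R|^{N/2})^{2p/N}$ and $(\int_{A_R}|\nabla\psi_R|^{N})^{p/N}$ uniformly bounded in $R$, while the remaining factors $(\int_{A_R}|u|^{p^*})^{p/p^*}$ and $(\int_{A_R}|\nabla u|^{p^\sharp})^{p/p^\sharp}$ are tails of convergent integrals — here I use $u\in L^{p^*}(\mathbb R^N)$ together with the embedding giving $\nabla u\in L^{p^\sharp}(\mathbb R^N)$ for $u\in D^{2,p}(\mathbb R^N)$ — and hence tend to $0$. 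Combining these facts and passing to the iterated limit in the displayed Sobolev inequality establishes $S\zeta_\infty^{p/p^*}\le\omega_\infty$.
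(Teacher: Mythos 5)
Your argument is correct. Note, however, that the paper offers no proof of this lemma at all: it is simply recalled as one of Lions' concentration--compactness lemmas (adapted to $D^{2,p}(\mathbb R^N)$), so there is no in-paper argument to compare against. Your route is the standard ``loss of mass at infinity'' computation, and it is in fact the same mechanism the authors use later when they estimate $\nu_\infty$ against $\zeta_\infty$ and $\omega_\infty$, and when they kill the analogous cross terms $\int |\Delta u_k|^{p-2}\Delta u_k(\nabla\psi\cdot\nabla u_k)$ and $\int|\Delta u_k|^{p-2}\Delta u_k\, u_k\Delta\psi$ in Lemma \ref{p11} via exactly your H\"older splittings $\bigl(\int|\nabla\psi|^{N}\bigr)^{1/N}$ and $\bigl(\int|\Delta\psi|^{N/2}\bigr)^{2/N}$ combined with strong local convergence. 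All the limit interchanges are sound: the pointwise inequality $S^{1/p}\bigl(\int_{|x|\ge 2R}|u_k|^{p^*}\bigr)^{1/p^*}\le \|\psi_R\Delta u_k\|_p+2\|\nabla\psi_R\cdot\nabla u_k\|_p+\|u_k\Delta\psi_R\|_p$ survives $\limsup_k$ and then $R\to\infty$ because the maps $R\mapsto\limsup_k\int_{|x|>R}(\cdot)$ are monotone, so replacing $R$ by $2R$ does not change $\zeta_\infty$. The one place that genuinely needs justification, and which you correctly flag, is the integrability $\nabla u\in L^{p^\sharp}(\mathbb R^N)$ with $p^\sharp=Np/(N-p)$: this follows from the Calder\'on--Zygmund bound $\|D^2u\|_{L^p(\mathbb R^N)}\le C\|\Delta u\|_{L^p(\mathbb R^N)}$ together with the Gagliardo--Nirenberg interpolation $\|\nabla u\|_{p^\sharp}\le C\|D^2u\|_p^{1/2}\|u\|_{p^*}^{1/2}$ (the exponents match since $\tfrac1{p^\sharp}=\tfrac1N+\tfrac12(\tfrac1p-\tfrac2N)+\tfrac12\cdot\tfrac1{p^*}$), both valid on $C_c^\infty$ and extended by density; without this global integrability the tail $\bigl(\int_{A_R}|\nabla u|^{p^\sharp}\bigr)^{p/p^\sharp}$ would not be known to vanish. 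With that point made explicit, the proof is complete and would be a worthwhile addition, since the paper leaves the lemma unproved.
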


\noi Now, we prove the following concentration compactness Lemma  for our problem $(\mc P_{\al,\la})$.
\begin{lemma}
Let \(N>2p\), \(0<\mu<N\), and \(1\leq p\leq \frac{p(2N-\mu)}{2(N-2p)}\). If $\{u_k\}$ is a bounded sequence in \(D^{2,p}(\mathbb  R^N)\) converges weakly, to $u$ as \(k\ra \infty\) and such that \(|u_k|^{p^*} \rightharpoonup \zeta\) and \(|\Delta u_k|^p \rightharpoonup \omega\) in the sense of measure. Assume that
\[\left(\int_{\mathbb R^N}\frac{|u_{k}(y)|^{p^{*}_{\mu}}}{|x-y|^{\mu}}dy\right)|u_{k}(x)|^{p^{*}_{\mu}} \rightharpoonup \nu\]
weakly in the sense of measure where \(\nu\) is a bounded positive measure on \(\mathbb R^N\) and define
\[\omega_{\infty} := \lim_{R\ra \infty} \limsup_{k\ra \infty} \int_{|x|\geq R} |\Delta u_k|^p dx, \quad \zeta_{\infty}:= \lim_{R\ra \infty} \limsup_{k\ra \infty} \int_{|x|\geq R} |u_k|^{p^*} dx. \]
\[ \nu_{\infty} := \lim_{R\ra \infty} \limsup_{k\ra \infty} \int_{|x|\geq R}\left(\int_{\mathbb R^N}\frac{|u_{k}(y)|^{p^{*}_{\mu}}}{|x-y|^{\mu}}dy\right)|u_{k}(x)|^{p^{*}_{\mu}}dx.\]
Then there exists a countable sequence of points \(\{z_i\}_{i\in J}\subset\mathbb R^N\) and families of positive numbers $\{\omega_i : i\in J\}$, $\{\zeta_i: i\in J\}$ and $\{\nu_i: i\in J\}$ such that
\begin{align}
	\nu & = \left(\int_{\mathbb R^N}\frac{|u(y)|^{p^{*}_{\mu}}}{|x-y|^{\mu}}dy\right) |u(x)|^{p^{*}_{\mu}} +\sum_{i\in J} \nu_{i} \delta_{z_i}, \quad \sum_{i\in J} \nu_i^{\frac{1}{p^{*}_{\mu}}} < \infty,\label{c1}\\
	\omega &\geq |\Delta u|^p + \sum_{i\in J} \omega_{i} \delta_{z_i},\label{l2}\\
	\zeta &\geq  |u|^{p^{*}} + \sum_{i\in J} \zeta_{i} \delta_{z_i},\label{l3}
\end{align}
and
\begin{align}
	S_{H,L} \nu_{i}^{\frac{p}{2p^{*}_{\mu}}}\leq \omega_i, \;\mbox{and} \; \nu_{i}^{\frac{N}{2N-\mu}} \leq C(N,\mu)^{\frac{N}{2N-\mu}} \zeta_i,\label{l4}
\end{align}
where \(\delta_x\) is the Dirac-mass of mass \(1\) concentrated at \(x \in\mathbb R^N\).

\noi For the energy at infinity, we have
\[\limsup_{k\ra \infty} \int_{\mathbb R^N } |\Delta u_k|^p dx= \omega_{\infty} + \int_{\mathbb R^N} d\omega, \quad \limsup_{k\ra \infty} \int_{\mathbb R^N} |u_k|^{p^*} dx = \zeta_{\infty} + \int_{\mathbb R^N} d\zeta, \]
\[\limsup_{k\ra \infty} \int_{\mathbb R^N}\int_{\mathbb R^N}\frac{|u_k(y)|^{p^{*}_{\mu}}|u_k(y)|^{p^{*}_{\mu}}}{|x-y|^{\mu}}   dxdy = \nu_{\infty}+ \int_{\mathbb R^N} d\nu,\]
and
\[C(N,\mu)^{-\frac{2N}{2N-\mu}} \nu_{\infty}^{\frac{2N}{2N-\mu}} \leq \zeta_{\infty} \left(\int_{\mathbb R^N} d\zeta + \zeta_{\infty}\right), \quad S^{p} C(N,\mu)^{-\frac{p}{p_{\mu}^*}}\nu_{\infty}^{\frac{p}{p_{\mu}^*}} \leq \omega_{\infty} \left(\int_{\mathbb R^N} d\omega + \omega_{\infty}\right).\]
\end{lemma}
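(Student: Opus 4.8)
The plan is to follow the scheme of Lions' concentration--compactness principle (Lemmas \ref{y12} and \ref{1e0}), handling the local measures $\omega,\zeta$ by the results already at hand and reserving the real work for the nonlocal measure $\nu$. Throughout I write $g_k:=|u_k|^{p^*_\mu}$ and $q:=\frac{2N}{2N-\mu}$ (so that $q'=\frac{2N}{\mu}$ and $g_k^q=|u_k|^{p^*}$), and I set $v_k:=u_k-u\rightharpoonup 0$ in $D^{2,p}(\mb R^N)$. The decomposition \eqref{l2}, the inequality \eqref{l3} (in fact an equality), the points $z_i$ and the finiteness of $\sum_i\zeta_i^{p/p^*}$ are read off directly from Lemma \ref{y12} applied to $\{u_k\}$. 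Applying Lemma \ref{y12} to $\{v_k\}$ shows that $|v_k|^{p^*}\rightharpoonup\sum_i\zeta_i\delta_{z_i}$ is purely atomic and supported on the very same set $\{z_i\}$ (the continuous part $|u|^{p^*}$ carries no atoms), a fact I use repeatedly. It is natural to \emph{define} the atoms by $\omega_i:=\lim_k|\Delta v_k|^p(\{z_i\})$, $\zeta_i:=\lim_k|v_k|^{p^*}(\{z_i\})$, $\nu_i:=\nu^{0}(\{z_i\})$, where $\nu^0$ is introduced below; the Br\'ezis--Lieb splittings of $|\Delta u_k|^p$ and $|u_k|^{p^*}$ guarantee these coincide with the atoms of $\omega,\zeta$.

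For the nonlocal measure I first reduce to the vanishing case. Testing the (localized form of the) Br\'ezis--Lieb type Lemma preceding Lemma \ref{y12} against an arbitrary $\phi\in C_c(\mb R^N)$ --- which is legitimate because the convergences \eqref{m1}--\eqref{m2} there take place in the strong topologies $L^{\frac{2N}{2N-\mu}}$ and $L^{\frac{2N}{\mu}}$ and hence survive multiplication by the bounded weight $\phi$ --- yields $\nu=\big(\int_{\mb R^N}\frac{|u(y)|^{p^*_\mu}}{|x-y|^\mu}dy\big)|u(x)|^{p^*_\mu}+\nu^{0}$, where $\nu^0$ is the weak-$*$ limit of $(|x|^{-\mu}*|v_k|^{p^*_\mu})|v_k|^{p^*_\mu}$. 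It remains to show $\nu^0=\sum_i\nu_i\delta_{z_i}$. Here I use a strong--weak pairing: off $\{z_i\}$ the limit measure $\sum_i\zeta_i\delta_{z_i}$ has no mass, so $g_k^0:=|v_k|^{p^*_\mu}\to 0$ strongly in $L^q_{loc}$ away from the atoms, while $|x|^{-\mu}*g_k^0$ stays bounded in $L^{q'}$ (by the mapping property behind \eqref{n61}). For $\phi$ supported away from the atoms I split the inner $y$-integral into a neighbourhood of $\supp\phi$ and its complement: on the first piece the Hardy--Littlewood--Sobolev inequality \eqref{h1} with the two local $L^q$-norms gives a bound tending to $0$, and on the second piece I pair the strongly vanishing factor $\phi g_k^0\in L^q$ against the bounded Riesz potential $|x|^{-\mu}*g_k^0\in L^{q'}$. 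Hence $\int\phi\,d\nu^0=0$ for every such $\phi$, which gives \eqref{c1}.

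To quantify the masses and prove \eqref{l4} I localize the two sharp inequalities at each $z_i$. Fixing a cutoff $\phi_\eps$ with $\phi_\eps(z_i)=1$ and $\supp\phi_\eps\subset B_\eps(z_i)$ (with $\eps$ small enough to isolate $z_i$) and applying the definition \eqref{n3} of $S_{H,L}$ and the inequality \eqref{h1} to $\phi_\eps v_k$, I let $k\to\infty$ and then $\eps\to 0$. On the right-hand sides, $\int|\Delta(\phi_\eps v_k)|^p\to\omega_i$ and $\int|\phi_\eps v_k|^{p^*}\to\zeta_i$, because in the expansion $\Delta(\phi_\eps v_k)=\phi_\eps\Delta v_k+2\nabla\phi_\eps\!\cdot\!\nabla v_k+v_k\Delta\phi_\eps$ the cross terms carry the factors $\nabla v_k,v_k$, which tend to $0$ in $L^p_{loc}$ via the compact embedding $D^{2,p}(\mb R^N)\hookrightarrow\hookrightarrow W^{1,p}_{loc}$; on the left-hand sides the localized nonlocal integrals converge to $\nu_i$. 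This produces $S_{H,L}\nu_i^{p/(2p^*_\mu)}\le\omega_i$ and $\nu_i^{N/(2N-\mu)}\le C(N,\mu)^{N/(2N-\mu)}\zeta_i$, i.e. \eqref{l4}; summing the first inequality over $i$ and using that $\omega$ is a finite measure gives the summability of the series in \eqref{c1}.

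Finally, for the behaviour at infinity I use a cutoff $\psi_R$ vanishing on $B_R$ and equal to $1$ off $B_{2R}$. Splitting each total integral into its part on $B_R$ (converging as $k\to\infty$ to $\int d\omega$, $\int d\zeta$, $\int d\nu$ for $R$ with null boundary measure) and its remainder, and then letting $R\to\infty$, yields the three energy identities by the very definitions of $\omega_\infty,\zeta_\infty,\nu_\infty$. For the two remaining inequalities I apply \eqref{h1} with outer factor $\chi_{\{|x|\ge R\}}|u_k|^{p^*_\mu}$ and the \emph{global} factor $|u_k|^{p^*_\mu}$ --- the convolution keeps its $y$-integral over all of $\mb R^N$, which is exactly why the mass appears as the product $\zeta_\infty(\int d\zeta+\zeta_\infty)$ --- and then convert the $\zeta$-quantities into $\omega$-quantities through the Sobolev inequality \eqref{sr} for the second estimate. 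The main obstacle throughout is the nonlocal coupling in $\nu$: because the kernel $|x-y|^{-\mu}$ links distant points, $\nu$ cannot be localized by a single cutoff as $\omega$ and $\zeta$ can, and both the atomicity of $\nu^0$ and the convergence of the localized nonlocal integrals to the single number $\nu_i$ hinge on cleanly separating a strongly convergent local factor from a merely bounded, weakly convergent Riesz potential; controlling the resulting off-diagonal contributions as $\eps\to 0$ is the delicate point.
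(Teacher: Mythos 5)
Your overall architecture is the same as the paper's: pass to $v_k=u_k-u$, read off $\omega$ and $\zeta$ from Lemma \ref{y12}, localize the inequalities \eqref{h1} and \eqref{n3} at each $z_i$ with cutoffs (discarding the cross terms of $\Delta(\phi_\eps v_k)$ by local compactness), and handle infinity with the cutoffs $\psi_R$. The gap is at the one step where the nonlocal term genuinely resists localization. To prove \eqref{l4} you need a \emph{lower} bound of the form
\begin{equation*}
\liminf_{k\to\infty}\int_{\mathbb R^N}\Bigl(|x|^{-\mu}*|\phi_\eps v_k|^{p_\mu^*}\Bigr)|\phi_\eps v_k|^{p_\mu^*}\,dx\;\geq\;\nu_i+o_\eps(1),
\end{equation*}
and the only mechanism you offer for moving the cutoff through the convolution is the strong--weak pairing: $|v_k|^{p_\mu^*}\to0$ strongly in $L^{2N/(2N-\mu)}_{\rm loc}$ \emph{away from the atoms}, paired against a bounded Riesz potential. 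That argument does show $\nu^0$ charges only $\{z_i\}$, but it cannot be run at $z_i$ itself, which is exactly where \eqref{l4} lives: on $B_\eps(z_i)$ the factor $|v_k|^{p_\mu^*}$ does not converge strongly (that is the concentration), and the off-diagonal term $\int_{B_\eps(z_i)}\int_{B_\eps(z_i)^c}\frac{|v_k(x)|^{p^*_\mu}|v_k(y)|^{p^*_\mu}}{|x-y|^{\mu}}\,dy\,dx$ does not vanish as $k\to\infty$ for fixed $\eps$; by \eqref{h1} it is only controlled by $C\bigl(\int_{B_\eps}|v_k|^{p^*}\bigr)^{(2N-\mu)/2N}\bigl(\int|v_k|^{p^*}\bigr)^{(2N-\mu)/2N}$, which does not tend to $0$ as $\eps\to0$ either, since the first factor limits to at least $\zeta_i^{(2N-\mu)/2N}>0$. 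So the assertion ``the localized nonlocal integrals converge to $\nu_i$'' is exactly the unproved content; you flag it as ``the delicate point'' at the end, but the tool you name does not close it.

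The paper closes it with the commutator estimate \eqref{n32}, whose proof exploits the cancellation $|\phi(y)|^{p_\mu^*}-|\phi(x)|^{p_\mu^*}$ on the diagonal to tame the kernel singularity, after which a.e.\ convergence of $v_k$ and a uniform $L^{1+s}$ bound on $\Psi_k$ give $\int|\Psi_k|\to0$ --- and this works \emph{at} the atoms as well as away from them. With \eqref{n32} in hand one gets the reverse H\"older inequality \eqref{n33} between $\tau_3$ and $\tau_2$ and the analogous inequality \eqref{n34} against $\tau_1$, from which atomicity of $\tau_3$ and both halves of \eqref{l4} follow by Lions' Lemma 1.2 and by taking $\phi=\chi_{\{z_i\}}$; this also sidesteps the (minor) fragility of your ``$\phi$ supported away from the atoms'' argument when the atoms accumulate. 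The rest of your proposal --- the Br\'ezis--Lieb localization of $\nu$, the identification of $\omega_i$, $\zeta_i$, and the estimates at infinity via $\psi_R$ and \eqref{h1} together with \eqref{sr} --- is consistent with the paper. To repair the proof, either establish \eqref{n32} (or an equivalent commutator bound) or restructure the argument so that only upper bounds on the localized Choquard energy are needed.
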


\begin{proof}
Let $v_k=u_k-u$. Then $\{v_k\}$ converging weakly to $0$ in $D^{2, p}(\mathbb R^N)$ and $v_k(x)\rightarrow 0$ a.e. in $\mathbb R^N$ as the bounded sequence $\{u_k\}$ converging weakly to $u$ in $D^{2, p}(\mathbb R^N)$. Lemma \ref{y12} yields
\begin{align*}
	|\De v_k|^p\rightharpoonup &\; \tau_1:=\omega-|\Delta u|^p,\\
	|v_k|^{p^*}\rightharpoonup &\; \tau_2:=\zeta-|u|^{p^*},\\
	\left(\int_{\mathbb R^N}\frac{|v_k(y)|^{p_\mu^*}}{|x-y|^{\mu}}dy\right)|v_k(x)|^{p_\mu^*}\rightharpoonup &\;\tau_3:=\nu - \left(\int_{\mathbb R^N}\frac{|u(y)|^{p_{\mu}^*}}{|x-y|^{\mu}}dy\right)|u(x)|^{p_{\mu}^*}.
\end{align*}
Firstly, we show that for every $\phi \in C_c^{\infty}(\mathbb R^N)$,
\begin{align}\label{n32}
	\left|\int_{\mathbb R^N}\left(|x|^{-\mu}*|\phi v_k(x)|^{p_{\mu}^*}\right)|\phi v_k(x)|^{p_{\mu}^*}dx - \int_{\mathbb R^N}\left(|x|^{-\mu}*| v_k(x)|^{p_\mu^*}\right)|\phi(x)|^{p_\mu^*}|\phi v_k(x)|^{p_{\mu}^*}dx\right|\rightarrow 0.
\end{align}
For this, we denote \[\Psi_k(x):=\left[\left(|x|^{-\mu}*|\phi v_k(x)|^{p_\mu^*}\right) - \left(|x|^{-\mu}*| v_k(x)|^{p_\mu^*}\right)|\phi(x)|^{p_\mu^*}\right]|\phi v_k(x)|^{p_\mu^*}.\] 
Since $\phi \in C_c^{\infty}(\mathbb R^N)$, we have for every $\delta >0$, there exists $K>0$ such that
\begin{align}\label{n31}
	\int_{|x|\geq K}|\Psi_k(x)|dx < \delta\;\;\forall\;k\geq 1.
\end{align}
As we know that Riesz potential defines a linear operator and using $v_k(x)\rightarrow 0$ a.e. in $\mathbb R^N$, so we obtain
\begin{align*}
	\int_{\mathbb R^N}\frac{|v_k(y)|^{p_\mu^*}}{|x-y|^{\mu}}dy\rightarrow 0\;\text{a.e. in}\;\mathbb R^N.
\end{align*}
Thus $\Psi_k(x)\rightarrow 0$ a.e. in $\mathbb R^N$. We note that
\begin{align*}
	\Psi_k(x)
	=& \int_{\mathbb R^N}\frac{\left(|\phi(y)|^{p_\mu^*}-|\phi(x)|^{p_\mu^*}\right)}{|x-y|^{\mu}}|v_k(y)|^{p_\mu^*}dy|\phi v_k(x)|^{p_\mu^*}
	:=\int_{\mathbb R^N}\Phi(x, y)|v_k(y)|^{p_\mu^*}dy|\phi v_k(x)|^{p_\mu^*},
\end{align*}
where $\Phi(x, y)=\frac{|\phi(y)|^{p_\mu^*}-|\phi(x)|^{p_\mu^*}}{|x-y|^{\mu}}$. Moreover, for almost all $x$, there exists some $R>0$ large enough such that
\begin{align*}
	\int_{\mathbb R^N}\Phi(x, y)|v_k(y)|^{p_\mu^*}dy
	=&\int_{|y|\geq R}\Phi(x, y)|v_k(y)|^{p_\mu^*}dy - |\phi(x)|^{p_\mu^*}\int_{|y|\geq R}\frac{|v_k(y)|^{p_\mu^*}}{|x-y|^{\mu}}dy.
\end{align*}
Using the mean value theorem and the fact that $p\geq 2$,  $\Phi(x, y)\in L^q(B_R)$ for each $x$, where $q<\frac{N}{\mu-1}$ if $\mu>1$, $q\leq +\infty$ if $0<\mu \leq 1$. With the help of Young's inequality, there exists $t>\frac{2N}{\mu}$ such that
\begin{align*}
	\left(\int_{B_K}\left(\int_{B_R}\Phi(x, y)|v_k(y)|^{p_\mu^*}dy\right)^t dx\right)^{\frac 1t}\leq L_{\phi}\|\Phi(x, y)\|_q\||v_k|^{p_\mu^*}\|_{\frac{2N}{2N-\mu}}\leq L^{\prime}_{\phi},
\end{align*}
where $K$ is same as in $\eqref{n31}$. Moreover, one can easily see that for $R>0$ large enough
\begin{align*}
	\left(\int_{B_K}\left(|\phi(x)|^{p_\mu^*}\int_{|y|\geq R}\frac{|v_k(y)|^{p_\mu^*}}{|x-y|^{\mu}}dy\right)^t dx\right)^{\frac 1t}\leq L,
\end{align*}
and so, we have
\begin{align*}
	\left(\int_{B_K}\left(\int_{\mathbb R^N}\Phi(x, y)|v_k(y)|^{p_\mu^*}dy\right)^t dx\right)^{\frac 1t}\leq L^{\prime\prime}_{\phi}.
\end{align*}
Thus for $s>0$ small enough, we obtain
\begin{align*}
	\int_{B_K}|\Psi_k(x)|^{1+s}dx\leq\left(\int_{B_K}\left(\int_{\mathbb R^N}\Phi(x, y)|v_k(y)|^{p_\mu^*}dy\right)^t dx\right)^{\frac 1t}\left(\int_{B_K}|\phi v_k|^{p^*}dx\right)^{\frac{p_\mu^*}{p^*}}\leq L^{\prime\prime}_{\phi}.
\end{align*}
Using this together with $\Psi _k(x)\rightarrow 0$ a.e. in $\mathbb R^N$, we have
\begin{align*}
	\int_{B_K}|\Psi_k(x)|dx \rightarrow 0\;\text{as}\;k\rightarrow\infty.
\end{align*}
Combining this with \eqref{n31}, we have
\begin{align*}
	\int_{\mathbb R^N}|\Psi_k(x)|dx \rightarrow 0\;\text{as}\;k\rightarrow\infty.
\end{align*}
Now, for every $\phi \in C_c^{\infty}(\mathbb R^N)$, by Hardy-Littlewood-Sobolev inequality, we obtain
\begin{align*}
	\int_{\mathbb R^N}\left(\int_{\mathbb R^N}\frac{|\phi v_k(y)|^{p_\mu^*}}{|x-y|^{\mu}}dy\right)|\phi v_k(x)|^{p_\mu^*} dx \leq C(N, \mu)\|\phi v_k\|_{p^*}^{2p_\mu^*}.
\end{align*}
Equation \eqref{n32} yields
\begin{align*}
	\int_{\mathbb R^N}|\phi(x)|^{2p_\mu^*}\left(\int_{\mathbb R^N}\frac{|v_k(y)|^{p_\mu^*}}{|x-y|^{\mu}}dy\right)|v_k(x)|^{p_\mu^*} dx \leq C(N, \mu)\|\phi v_k\|_{p^*}^{2p_\mu^*}.
\end{align*}
On taking the limit as $k\rightarrow \infty$, we obtain
\begin{align}\label{n33}
	\int_{\mathbb R^N}|\phi(x)|^{2p_\mu^*}d\tau_3\leq C(N, \mu)\left(\int_{\mathbb R^N}|\phi|^{p^*}d\tau_2\right)^{\frac{2p_\mu^*}{p^*}}.
\end{align}
Employing Lemma $1.2$ in \cite{r2}, one can directly obtain \eqref{l3}.
\\ Further, let $\phi=\chi_{\{z_i\}}$, $i\in J$ and using this in \eqref{n33}, we obtain
\begin{align*}
	\nu_i^{\frac{p^*}{2p_\mu^*}}\leq \left(C(N, \mu)\right)^{\frac{p^*}{2p_\mu^*}}\zeta_i,\;\forall \;i\in J.
\end{align*}
Definition of $S_{H, L}$ yields
\begin{align*}
	\left(\int_{\mathbb R^N}\left(\int_{\mathbb R^N}\frac{|\phi v_k(y)|^{p_\mu^*}}{|x-y|^{\mu}}dy\right)|\phi v_k(x)|^{p_\mu^*}dx\right)^{\frac{p}{2p_\mu^*}}S_{H, L}\leq \int_{\mathbb R^N}|\De(\phi v_k)|^p dx.
\end{align*}
Also equation \eqref{n32} and $v_k\rightarrow 0$ in $L^p_{loc}(\mathbb R^N)$ give
\begin{align*}
	\left(\int_{\mathbb R^N}|\phi(x)|^{2p_\mu^*}\left(\int_{\mathbb R^N}\frac{|v_k(y)|^{p_\mu^*}}{|x-y|^{\mu}}dy\right)| v_k(x)|^{p_\mu^*}dx\right)^{\frac{p}{2p_\mu^*}}S_{H, L}\leq \int_{\mathbb R^N}|\phi|^p|\De v_k|^p dx + o(1).
\end{align*}
On passing the limit as $k \rightarrow \infty$, we have
\begin{align}\label{n34}
	\left(\int_{\mathbb R^N}|\phi(x)|^{2p_\mu^*}d \tau_3\right)^{\frac{p}{2p_\mu^*}}S_{H, L}\leq \int_{\mathbb R^N}|\phi|^p d\tau_1.
\end{align}
Let $\phi=\chi_{\{z_i\}}$, $i\in J$ and applying this in \eqref{n34}, we have
\begin{align*}
	S_{H, L}\nu_i^{\frac{p}{2p_\mu^*}}\leq \omega_i,\;\forall \;i\in J.
\end{align*}
This completes the proof of \eqref{c1} and \eqref{l4}.

\noi Now, we prove the possible loss of mass at infinity. For $R>1$, let $\psi_{R}\in C^{\infty}(\mathbb R^N)$ be such that $\psi_{R}=1$ for $|x|> R+1$, $\psi_{R}(x)=0$ for $|x|<R$ and $0\leq \psi_{R}(x) \leq 1$ on $\mathbb R^N$. For every $R$, we have
	
    {\small \begin{align*}
		\limsup_{k\ra \infty} \int_{\mathbb R^N} &\int_{\mathbb R^N}\frac{|u_{k}(y)|^{p^{*}_{\mu}}|u_{k}(x)|^{p^{*}_{\mu}}}{|x-y|^{\mu}} dy dx\\
		&= 	\limsup_{k\ra \infty} \left( \int_{\mathbb R^N} \int_{\mathbb R^N}\frac{|u_{k}(y)|^{p^{*}_{\mu}}|u_{k}(x)|^{p^{*}_{\mu}}\psi_{R}(x)}{|x-y|^{\mu}} dy dx + 	 \int_{\mathbb R^N} \int_{\mathbb R^N}\frac{|u_{k}(y)|^{p^{*}_{\mu}}|u_{k}(x)|^{p^{*}_{\mu}}(1-\psi_{R}(x))}{|x-y|^{\mu}}  dy dx  \right)\\
		&= 	\limsup_{k\ra \infty} \int_{\mathbb R^N} \int_{\mathbb R^N}\frac{|u_{k}(y)|^{p^{*}_{\mu}}|u_{k}(x)|^{p^{*}_{\mu}}\psi_{R}(x)}{|x-y|^{\mu}} dy dx + 	\int_{\mathbb R^N} (1- \psi_{R}) d\nu.
	\end{align*}}
	Taking $R\ra \infty$, by Lebesgue's  dominated convergent theorem, we deduce
	\[	\limsup_{k\ra \infty} \int_{\mathbb R^N} \int_{\mathbb R^N}\frac{|u_{k}(y)|^{p^{*}_{\mu}}|u_{k}(x)|^{p^{*}_{ \mu}} }{|x-y|^{\mu} }  dy dx= \nu_\infty +\int_{\mb R^N} d\nu. \]
By the Hardy-Littlewood-Sobolev inequatlity \eqref{h1}, we obtain
	\begin{align*}
		\nu_{\infty} &=\lim_{R\ra\infty} \limsup_{k\ra \infty} \int_{\mathbb R^N} \left(\int_{\mathbb R^N}\frac{|u_k(y)|^{p^{*}_{\mu}}}{|x-y|^{\mu}} dy \right) {|\psi_{R}u_k(x)|^{p^{*}_{\mu}}}  dx \\
		&\leq C(N,\mu) \lim_{R\ra\infty} \limsup_{k\ra \infty}  \left(\int_{\mathbb R^N} |u_k|^{p^*} dx \int_{\mathbb R^N} |\psi_{R} u_k|^{p^*} dx \right)^{\frac{2N-\mu}{2N}}\\
		&=  C(N,\mu)  \left((\zeta_{\infty}+ \int_{\mathbb R^N} d\zeta) \zeta_{\infty} \right)^{\frac{2N-\mu}{2N}}.
	\end{align*}
	This implies
	\[C(N,\mu)^{-\frac{2N}{2N-\mu}} \nu_{\infty}^{\frac{2N}{2N-\mu}} \leq \zeta_{\infty} \left(\int_{\mathbb R^N} d\zeta + \zeta_{\infty}\right).\]
	Similarly, by the Hardy-Littlewood-Sobolev inequatlity \eqref{h1}, we obtain
	\begin{align*}
		\nu_{\infty} &=\lim_{R\ra\infty} \limsup_{k\ra \infty} \int_{\mathbb R^N} \left(\int_{\mathbb R^N}\frac{|u_k(y)|^{p^{*}_{\mu}}}{|x-y|^{\mu}} dy \right) {|\psi_{R}u_k(x)|^{p^{*}_{\mu}}}  dx \\
		&\leq C(N,\mu) \lim_{R\ra\infty} \limsup_{k\ra \infty}  \left(\int_{\mathbb R^N} |u_k|^{p^*} dx \int_{\mathbb R^N} |\psi_{R} u_k|^{p^*} dx \right)^{\frac{2N-\mu}{2N}}\\
		&\leq C(N,\mu) S^{-p_{\mu}^{*}} \lim_{R\ra\infty} \limsup_{k\ra \infty}  \left(\int_{\mathbb R^N} |\Delta u_k|^{p} dx \int_{\mathbb R^N} |\De(\psi_{R} u_k)|^{p} dx \right)^{\frac{p^{*}_{\mu}}{p}}\\
		&=  C(N, \mu) S^{-p_{\mu}^{*}} \left((\omega_{\infty}+ \int_{\mathbb R^N} d\omega)\omega_{\infty} \right)^{\frac{p^{*}_{\mu}}{p}}.
	\end{align*}
	This implies
	\[S^{p} C(N,\mu)^{-\frac{p}{p_{\mu}^{*}}} \nu_{\infty}^{\frac{p}{p_{\mu}^{*}}} \leq \omega_{\infty} \left(\int_{\mathbb R^N} d\omega + \omega_{\infty}\right).\]
	This completes the proof. 
\end{proof}

\noi	We state the general version of the mountain pass Lemma which will be used to prove the Theorem.
	\begin{theorem}		Let $I$ be a functional on a Banach space \(X\) and \(I\in C^{1}(E, \mathbb R)\). If there exists \(\alpha\), \(\rho\) such that
		\begin{enumerate}
			\item \(I(u)\geq \al\), \(u\in X\) with \(\|u\|= \rho\);
			\item \(I(u)=0\) and $I(e)<0$ for some $e\in E$ with $\|e\|>\rho$.
		\end{enumerate}
		Define
		\[\Gamma_j=\{\ga\in C([0,1], X): \ga(0)=0, \ga(1)=e\}\]
		and
		\[c= \inf_{\ga\in \Gamma} \max_{t\in[0,1]} I(\ga(t)).\]
		
		\noi Then there exists a sequence $\{u_k\}_k \subset X$ such that $I(u_k)\ra c$ and $I^{\prime}(u_k) \ra 0$ in $X^{\prime}$.
	\end{theorem}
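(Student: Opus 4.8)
The statement is the classical Ambrosetti--Rabinowitz mountain pass theorem, in the sharp form that produces a Palais--Smale sequence at the minimax level \emph{without} assuming the Palais--Smale condition a priori. The plan is to argue by contradiction using the quantitative deformation lemma (see \cite{willem}, Lemma~2.3), which in turn rests on the existence of a pseudo-gradient vector field for $I$ on $X$.

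First I would check that the minimax value $c$ is well defined and strictly positive. Since $I\in C^1$ is continuous and $\gamma_0(t)=te$ lies in $\Gamma$, we have $c\leq \max_{t\in[0,1]} I(te)<\infty$. For the lower bound, every $\gamma\in\Gamma$ satisfies $\|\gamma(0)\|=0<\rho<\|e\|=\|\gamma(1)\|$, so by continuity of $t\mapsto\|\gamma(t)\|$ there is $t_0$ with $\|\gamma(t_0)\|=\rho$; hypothesis (1) then gives $I(\gamma(t_0))\geq\al$, whence $\max_t I(\gamma(t))\geq\al$ and therefore $c\geq\al>0=I(0)$.

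Next, suppose for contradiction that no Palais--Smale sequence at level $c$ exists. Then there are $\eps>0$ and $\delta>0$ with $\|I'(u)\|_{X'}\geq\delta$ for every $u$ satisfying $|I(u)-c|\leq 2\eps$; shrinking $\eps$ we may also assume $0<c-2\eps$ and $I(e)<c-2\eps$ (possible since $c>0$ and $I(e)<0<c$). Writing $I^d=\{u\in X: I(u)\leq d\}$, the quantitative deformation lemma then produces a continuous map $\eta:[0,1]\times X\to X$ such that $\eta(1,\cdot)$ carries $I^{c+\eps}$ into $I^{c-\eps}$, leaves fixed every point outside $I^{-1}([c-2\eps,c+2\eps])$, and makes $s\mapsto I(\eta(s,u))$ non-increasing.

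Finally I would derive the contradiction. Choose $\gamma\in\Gamma$ with $\max_{t} I(\gamma(t))\leq c+\eps$ and set $\tilde\gamma=\eta(1,\gamma(\cdot))$. Because $I(0)=0<c-2\eps$ and $I(e)<c-2\eps$, the endpoints lie outside the deformation band and are fixed, so $\tilde\gamma(0)=0$, $\tilde\gamma(1)=e$ and hence $\tilde\gamma\in\Gamma$; but $\gamma(t)\in I^{c+\eps}$ for all $t$ forces $I(\tilde\gamma(t))\leq c-\eps$, contradicting the definition of $c$ as the infimum. Thus a Palais--Smale sequence at level $c$ must exist. The main technical obstacle is the deformation lemma itself: since $X$ is merely a Banach space, one cannot flow along a gradient and must first construct a locally Lipschitz pseudo-gradient vector field for $I$ on the set where $I'\neq 0$, then integrate the associated ODE to build $\eta$; this construction is standard and I would simply invoke \cite{willem}.
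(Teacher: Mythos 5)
Your proof is correct, but there is nothing in the paper to compare it against: the paper merely quotes this statement as ``the general version of the mountain pass Lemma'' (the Ambrosetti--Rabinowitz theorem in the form that yields a Palais--Smale sequence at the minimax level without assuming the $(PS)$ condition) and gives no proof. Your route, via the quantitative deformation lemma built on a pseudo-gradient vector field (as in \cite{willem}), is the standard argument and every step is sound: the intermediate-value argument showing each path crosses the sphere $\|u\|=\rho$ and hence $c\geq\al>0$; the negation of the conclusion giving $\|I'(u)\|_{X'}\geq\delta$ on the band $|I(u)-c|\leq 2\eps$; and the verification that the deformed path remains admissible because both endpoints have energy strictly below $c-2\eps$ and are therefore fixed by the deformation. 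One caveat worth flagging: the paper's statement is garbled --- condition (2) should read $I(0)=0$ rather than $I(u)=0$, the constant $\al$ is implicitly positive, $X$ and $E$ are the same space, and $\Gamma_j$ should be $\Gamma$. Your reconstruction of the intended hypotheses is the correct one, and your argument uses exactly what is needed from them, namely $I(0)=0<\al\leq c$ and $I(e)<0<c-2\eps$, to keep $\gamma(0)$ and $\gamma(1)$ outside the deformation band.
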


 	
	\begin{definition}
		Let $I: X\rightarrow \mathbb R$ be a $C^{1}$ functional on a Banach space $X$.
		\begin{enumerate}
			\item For $c\in \mb R$, a sequence $\{u_k\}\subset X$ is a $(PS)_c$ (Palais-Smale sequence at level $c$)  in $X$ for $I$ if $I(u_k)=c +o_{k}(1)$ and $I^{\prime}(u_k) \rightarrow 0$ in $X^{-1}$ as $k\rightarrow \infty.$
			\item We say $I$ satisfies $(PS)_{c}$ condition if for any Palais-Smale sequence $\{u_k\}$ in $X$ for $I$ has a convergent subsequence in X.
		\end{enumerate}
	\end{definition}
	
	


	\noi We define the energy functional $ \mc E_{\al,\la}$ corresponding to the problem $(\mc P_{\al, \la})$ as
	\begin{align*}
		\mc E_{\al,\la}(u) = \frac{a}{p} \|u\|^{p}+ \frac{b}{p\theta} \|u\|^{p\theta} - \frac{\al}{2 p_{\mu}^*}\int_{\mathbb R^N}\int_{\mathbb R^N}\frac{|u(x)|^{p_{\mu}^*}|u(y)|^{p_{\mu}^*}}{|x-y|^{\mu}}dxdy - \frac{\la}{r}\int_{\mathbb R^N}f(x)|u|^r dx.
	\end{align*}
	Then, by Hardy-Littlewood-Sobolev inequality, one can easily see that $\mc E_{\al, \la}\in C^{1}(D^{2,p}(\mathbb R^N), \mb R)$. Moreover, $u$ is a weak solution of the problem $(\mc P_{\al, \la})$ if and only if $u$ is a critical point of the functional $\mc E_{\al,\la}$.
	A function $u \in D^{2,p}(\mathbb R^N)$ is said to be a weak solution of $(\mc P_{\al,\la})$ if, for all $\phi \in D^{2,p}(\mathbb R^N) $,
	{\small\begin{align*}
			(a+ b \|u\|^{p(\theta-1)}) \int_{\mathbb R^N}|\Delta u|^{p-2}\Delta u\De \phi dx &= \la \int_{\mathbb R^N}f(x)|u|^{r-2}u\phi dx \\&\quad+ \al \int\int_{\mb R^{2N}}\frac{|u(y)|^{p_\mu^*}|u(x)|^{p_\mu^*-2}u(x)\phi(x)}{|x-y|^{\mu}} dydx.
	\end{align*}}
	
Throughout the article, for $\al=1$, we denote  the problem $ (\mc P_{\al,\la})$ by $(\mc P_{\la})$ and energy functional $ \mc E_{\al,\la}(u)$ by $\mc E_{\la}(u)$.	
	\section{The Palais-Smale condition}
\noi This section is  divided into two subsections $3.1$ and $3.2$ in which we discuss how the $(PS)_c$ sequence satisfies the Palais-Smale condition in the cases  $p\theta < 2p_{\mu}^*$ and $p\theta \geq 2p_{\mu}^*$ respectively.
    \subsection{Case 1: \texorpdfstring{$p\theta < 2p_{\mu}^*$ }{}}
    \noi In this subsection, we use concentration compactness principle which we proved in section $2$ to show the $(PS)_c$ condition for different range of $r$.
	\begin{lemma}\label{l1}
		Let  $p\theta < 2 p^{*}_{\mu}$ and $1 <r<p^*$. Then any $(PS)_c$ sequence $\{u_k\}$ of $\mc E_{\al,\la}$ is bounded in \(D^{2,p}(\mathbb R^N)\).
	\end{lemma}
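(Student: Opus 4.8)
The plan is to test the two Palais--Smale conditions against $u_k$ itself and to form a single linear combination of $\mc E_{\al,\la}(u_k)$ and $\langle \mc E'_{\al,\la}(u_k),u_k\rangle$ that cancels one of the two nonlinear terms, leaving a coercive power of $\|u_k\|$ on the left. Abbreviating $\mc D(u_k):=\int_{\mb R^N}\int_{\mb R^N}\frac{|u_k(x)|^{p^*_\mu}|u_k(y)|^{p^*_\mu}}{|x-y|^{\mu}}\,dx\,dy$ and $\mc F(u_k):=\int_{\mb R^N}f|u_k|^r\,dx$, the weak formulation gives $\langle \mc E'_{\al,\la}(u_k),u_k\rangle = a\|u_k\|^p+b\|u_k\|^{p\theta}-\al\,\mc D(u_k)-\la\,\mc F(u_k)$. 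Since $\{u_k\}$ is a $(PS)_c$ sequence, for $k$ large one has $\mc E_{\al,\la}(u_k)\le c+1$ and $|\langle \mc E'_{\al,\la}(u_k),u_k\rangle|\le \|u_k\|$.

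For a parameter $\sigma>0$ a direct computation yields
\begin{align*}
\mc E_{\al,\la}(u_k)-\sigma\langle \mc E'_{\al,\la}(u_k),u_k\rangle
&= a\left(\tfrac1p-\sigma\right)\|u_k\|^p+b\left(\tfrac{1}{p\theta}-\sigma\right)\|u_k\|^{p\theta}\\
&\quad -\al\left(\tfrac{1}{2p^*_\mu}-\sigma\right)\mc D(u_k)-\la\left(\tfrac1r-\sigma\right)\mc F(u_k).
\end{align*}
The role of the hypothesis $p\theta<2p^*_\mu$ is precisely that $\tfrac{1}{p\theta}-\tfrac{1}{2p^*_\mu}>0$, so the coefficient of $\|u_k\|^{p\theta}$ is strictly positive (using $b>0$) when $\sigma=\tfrac{1}{2p^*_\mu}$; note also $p\le p\theta<2p^*_\mu$ and $p^*<2p^*_\mu$, the latter because $2p^*_\mu-p^*=\tfrac{p(N-\mu)}{N-2p}>0$.

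To control $\mc F(u_k)$ I would apply Hölder's inequality with exponents $t=\tfrac{p^*}{p^*-r}$ and $\tfrac{p^*}{r}$ and then the Sobolev embedding $D^{2,p}(\mb R^N)\hookrightarrow L^{p^*}(\mb R^N)$ from \eqref{sr}, giving $\mc F(u_k)\le \|f\|_{L^t}\,S^{-r/p}\|u_k\|^r$. I then split on the range of $r$. When $1<r\le p\theta$ I take $\sigma=\tfrac{1}{2p^*_\mu}$: the Choquard term drops, both surviving power coefficients are nonnegative (strictly positive for $\|u_k\|^{p\theta}$), and moving the subcritical term to the right gives $b(\tfrac{1}{p\theta}-\tfrac{1}{2p^*_\mu})\|u_k\|^{p\theta}\le c+1+\|u_k\|+\la(\tfrac1r-\tfrac{1}{2p^*_\mu})\|f\|_{L^t}S^{-r/p}\|u_k\|^r$; since $r\le p\theta$ and $p\theta\ge p\ge 2>1$, the left-hand power dominates as $\|u_k\|\to\infty$, forcing boundedness. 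When $p\theta\le r<p^*$ I instead take $\sigma=\tfrac1r$: the $\mc F$-term vanishes, the coefficients $a(\tfrac1p-\tfrac1r)$ and $b(\tfrac{1}{p\theta}-\tfrac1r)$ are nonnegative, and because $r<p^*<2p^*_\mu$ the Choquard coefficient $-\al(\tfrac{1}{2p^*_\mu}-\tfrac1r)$ is nonnegative as well; hence the right-hand side is $\le c+1+\|u_k\|$ while the left isolates the coercive power $\|u_k\|^{p\theta}$ (or $\|u_k\|^p$ when $a>0$), again giving boundedness.

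The \emph{main obstacle} is the borderline $r=p\theta$, where the two leading powers coincide: there the coefficient comparison only yields boundedness after absorbing the subcritical term, which forces either $a>0$ or the smallness condition on $\la$ built into the hypotheses of Theorems \ref{th12} and \ref{th13}. This is also exactly why no single choice of $\sigma$ suffices in the excluded range $p<r<p\theta$: the subcritical exponent $r$ then lies strictly between the two coercive exponents $p\theta$ and $2p^*_\mu$, so neither cancellation leaves a dominating term, which is consistent with that case being flagged as open.
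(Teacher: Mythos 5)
Your proof is correct and follows essentially the same route as the paper's: forming $\mc E_{\al,\la}(u_k)-\sigma\langle \mc E'_{\al,\la}(u_k),u_k\rangle$ with $\sigma=\tfrac{1}{2p_{\mu}^{*}}$ in the lower range of $r$ and $\sigma=\tfrac1r$ when $p\theta\leq r<p^*$, combined with the H\"older--Sobolev bound \eqref{e3}. If anything, retaining the $b\|u_k\|^{p\theta}$ term in the first regime is a slight improvement, since the paper's displayed chain for $p<r<p\theta$ discards it and the remaining $a\|u_k\|^p$ term alone does not dominate $\|u_k\|^r$ there; your explicit flagging of the borderline coefficient conditions at $r=p$ and $r=p\theta$ is also consistent with the smallness/positivity hypotheses the paper invokes in those cases.
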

	
	\begin{proof}
		Let \(\{u_k\}\) be a $(PS)_c$ sequence in $D^{2,p}(\mathbb R^N)$. Then
		\begin{align}\label{e1}
			\frac{a}{p} \|u_k\|^{p}+ \frac{b}{p\theta} \|u_k\|^{p\theta} - \frac{\alpha}{2p_{\mu}^*}\int_{\mathbb R^N}\int_{\mathbb R^N}\frac{|u_k(x)|^{p_{\mu}^*}|u_k(y)|^{p_{\mu}^*}}{|x-y|^{\mu}}dxdy - \frac{\la}{r}\int_{\mathbb R^N}f(x)|u_k|^{r} dx= c+o_k(1),
		\end{align}
  and for all \(\phi \in C_c^{\infty}(\mb R^N)\),
		\begin{align}\label{e2}
			(a+ b \|u_k\|^{p(\theta-1)}) &\int_{\mathbb R^N}|\Delta u_k|^{p-2}\Delta u_k \De \phi dx -\la \int_{\mathbb R^N}f(x)|u_k|^{r-2}u_{k}\phi dx \notag\\
			& \quad-\mu \int\int_{ \mb R^{2N}}\frac{|u_k(y)|^{p_\mu^*}|u_k(x)|^{p_\mu^*-2}u_{k}(x)\phi(x)}{|x-y|^{\mu}} dxdy = o(1)\|u_k\|.
		\end{align}
		Now using H\"{o}lder inequality and Sobolev embedding Theorem, we can easily deduce that
		\begin{align}\label{e3}
			\int_{\mathbb R^N}f(x)|u_k|^{r} dx \leq S^{-\frac{r}{p}} \|f\|_{\frac{p^{*}}{p^* -r}} \|u_k\|^{r}.
		\end{align}
\noi Case 1: $1< r<p$. Equations \eqref{e1}, \eqref{e2} and \eqref{e3} yield, as $k\ra \infty$
		\begin{align*}
			c+ o_k(1)\|u_k\|&= \mc E_{\al,\la}(u_k) - \frac{1}{2 p_{\mu}^{*}}  \langle \mc E^{\prime}_{\al,\la}(u_k), u_k\rangle \\
			&\geq \left(\frac{1}{p}- \frac{1}{2 p_{\mu}^{*}}\right)a \|u_k\|^p + \left(\frac{1}{p\theta}- \frac{1}{2 p_{\mu}^{*}}\right)b \|u_k\|^{p\theta}- \la\left(\frac{1}{r}- \frac{1}{2 p_{\mu}^{*}}\right) \int f(x)|u_{k}|^{r} dx\\
			&\geq \left(\frac{1}{p}- \frac{1}{2 p_{\mu}^{*}}\right)a \|u_k\|^p - \left(\frac{1}{r}- \frac{1}{2 p_{\mu}^{*}}\right) \la S^{-\frac{r}{p}} \|f\|_{\frac{p^{*}}{p^*-r}} \|u_k\|^{r},
		\end{align*}
		since $p \theta< 2 p^{*}_{\mu}$, and ${1<r<p}$. This implies that $\{u_k\}$ is bounded in $D^{2,p}(\mathbb R^N)$. 

\noi Case 2: $r=p$. Again as in case 1, we have, as $k\ra \infty$
		\begin{align*}
			c+ o_k(1)\|u_k\|&= \mc E_{\al,\la}(u_k) - \frac{1}{2 p_{\mu}^{*}}  \langle \mc E^{\prime}_{\al,\la}(u_k), u_k\rangle \\
			&\geq \left(\frac{1}{p}- \frac{1}{2 p_{\mu}^{*}}\right) \left(a- \la S^{-1} \|f\|_{\frac{p^{*}}{p^*-r}}\right)\|u_k\|^p,
		\end{align*}
	since $0<\la< \frac{a}{\la S^{-1} \|f\|_{\frac{p^{*}}{p^*-r}}}$, $\{u_k\}$ is bounded in $D^{2,p}(\mathbb R^N)$.
 
\noi Case 3:  $p<r< p\theta$. Same as in case 1, we have, as $k\rightarrow\infty$
\begin{align*} 
			c+ o_k(1)\|u_k\|&= \mc E_{\al,\la}(u_k) - \frac{1}{2 p_{\mu}^{*}}  \langle \mc E^{\prime}_{\al,\la}(u_k), u_k\rangle \\
			&\geq \left(\frac{1}{p}- \frac{1}{2 p_{\mu}^{*}}\right)a \|u_k\|^p - \left(\frac{1}{r}- \frac{1}{2 p_{\mu}^{*}}\right) \la S^{-\frac{r}{p}} \|f\|_{\frac{p^{*}}{p^*-r}} \|u_k\|^{r},
		\end{align*}
		since $p \theta< 2 p^{*}_{\mu}$, and ${p<r< p\theta}$. This implies that $\{u_k\}$ is bounded in $D^{2,p}(\mathbb R^N)$.

\noi Case 3: $p\theta \leq r<p^{*}$. Using \eqref{e1}, \eqref{e2} and \eqref{e3},  as $k\ra \infty$, we deduce that
		{\small\begin{align*}
			c+ o_k(1)\|u_k\|&= \mc E_{\al,\la}(u_k) - \frac{1}{r}  \langle \mc E^{\prime}_{\al,\la}(u_k), u_k\rangle \\
			&\geq \left(\frac{1}{p}- \frac{1}{r}\right)a \|u_k\|^p + \left(\frac{1}{p\theta}- \frac{1}{r}\right)b \|u_k\|^{p\theta}+ \left(\frac{1}{r}- \frac{1}{2 p_{\mu}^{*}}\right) \int_{\mb R^{N}}\int_{\mb R^{N}} \frac{|u_k(y)|^{p_{\mu}^{*}}|u_k(x)|^{p_{\mu}^{*}}}{|x-y|^{\mu}}dy dx \\
			&\geq \left(\frac{1}{p}- \frac{1}{r}\right)a \|u_k\|^p,
		\end{align*}}
		using the fact that $p\theta\leq r<p^*< 2 p^{*}_{\mu}$. Therefore, $\{u_k\}$ is bounded in $D^{2,p}(\mathbb R^N).$
  This complete the proof of Lemma.\qed
	\end{proof}
	

	
	\begin{lemma}\label{p11}
		Let \(\{u_k\}\subset D^{2,p}(\mathbb R^N)\) be a Palais-Smale sequence of functional \(\mc E_{\al,\la}\). If $p\theta < 2 p_{\mu}^*$, $ 1 < r < p$ and $c<0$. Then the following two properties holds:
		\begin{enumerate}
			\item[1] For each $\lambda>0$ there exists $\overline{\La} >0$ such that $\mc E_{\al,\lambda}$ satisfies the $(PS)_c$ condition for all $\al \in (0, \overline{\La})$.
			\item[2] For each $\alpha>0$ there exists $\underline{\La} >0$ such that $\mc E_{\al,\lambda}$ satisfies the $(PS)_c$ condition for all $\lambda \in (0, \underline{\La})$.
		\end{enumerate}
		This means there exists a subsequence of \(\{u_k\}\) which converges strongly in \(D^{2,p}(\mathbb R^N)\).
	\end{lemma}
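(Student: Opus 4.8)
The plan is to rule out loss of compactness (concentration at finite points and at infinity) by exploiting both the smallness of the parameter and the sign condition $c<0$, and then to upgrade to strong convergence via the $(S_+)$-property of $\Delta_{p}^{2}$. First I would fix a $(PS)_c$ sequence $\{u_k\}$. By Lemma \ref{l1} it is bounded in $D^{2,p}(\mathbb R^N)$, and crucially the bound depends only on $c,\la,a,b,f$ and \emph{not} on $\al$, since the Choquard term cancels in that estimate. Passing to a subsequence, $u_k\rightharpoonup u$ in $D^{2,p}(\mathbb R^N)$, $u_k\to u$ a.e., $\|u_k\|^p\to A\ge0$, and the concentration-compactness Lemma of Section 2 furnishes countably many points $\{z_i\}_{i\in J}$ with masses $\omega_i,\zeta_i,\nu_i$ and $\omega_\infty,\zeta_\infty,\nu_\infty$ obeying \eqref{c1}--\eqref{l4} and the energy-at-infinity inequalities. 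Put $K:=a+bA^{\theta-1}\ge0$ and $\beta:=\tfrac{p}{2p_{\mu}^{*}}$, noting $\beta<1$ and $\theta\beta<1$ because $p\theta<2p_{\mu}^{*}$.

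Next I would show $\nu_i=0$ for all $i$ and $\nu_\infty=0$. Testing the weak identity \eqref{e2} with $\phi_{\eps,i}u_k$, where $\phi_{\eps,i}$ is a cut-off equal to $1$ near $z_i$ and supported in $B_{2\eps}(z_i)$, then letting $k\to\infty$ and $\eps\to0$: the term $\la\int f|u_k|^r\phi_{\eps,i}$ vanishes (weak continuity of the subcritical $f$-functional, valid since $r<p^*$ and $f\in L^{\frac{p^*}{p^*-r}}$), the cross terms of $\int|\Delta u_k|^{p-2}\Delta u_k\,\Delta(\phi_{\eps,i}u_k)$ vanish by H\"older, and the localized Choquard integral picks out the atom, giving the balance $K\,\omega(\{z_i\})=\al\nu_i$. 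With $\omega(\{z_i\})\ge\omega_i\ge S_{H,L}\nu_i^{\beta}$ from \eqref{l4} this forces $\al\nu_i\ge KS_{H,L}\nu_i^{\beta}$; inserting $K\ge a$ when $a>0$, or $K\ge b(S_{H,L}\nu_i^{\beta})^{\theta-1}$ when $a=0$, yields a lower bound $\nu_i\ge M(\al)>0$ with $M(\al)\to\infty$ as $\al\to0$. The same computation with the cut-off $\psi_R$ at infinity, together with $S^{p}C(N,\mu)^{-p/p_{\mu}^{*}}\nu_\infty^{p/p_{\mu}^{*}}\le\omega_\infty(\int d\omega+\omega_\infty)$, gives a parallel lower bound when $\nu_\infty>0$. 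In every case concentration also forces $A\ge S_{H,L}\nu_i^{\beta}\ge A_0(\al)>0$, with $A_0(\al)\to\infty$ as $\al\to0$.

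To reach a contradiction I would use the level identity (exactly the computation in Lemma \ref{l1}) $\mc E_{\al,\la}(u_k)-\tfrac{1}{2p_{\mu}^{*}}\langle\mc E_{\al,\la}'(u_k),u_k\rangle\to c$, which gives
\begin{align*}
c=a\Big(\tfrac1p-\tfrac1{2p_{\mu}^{*}}\Big)A+b\Big(\tfrac1{p\theta}-\tfrac1{2p_{\mu}^{*}}\Big)A^{\theta}-\la\Big(\tfrac1r-\tfrac1{2p_{\mu}^{*}}\Big)F_\infty,
\end{align*}
with $F_\infty=\int f|u|^r\le S^{-r/p}\|f\|_{\frac{p^*}{p^*-r}}A^{r/p}$. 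Hence $c\ge g(A):=c_1A+c_2A^{\theta}-c_3\la A^{r/p}$, where $c_1\ge0$, $c_2>0$, $c_3>0$ and $r/p<1\le\theta$, so $g(A)=A^{r/p}\big(c_1A^{1-r/p}+c_2A^{\theta-r/p}-c_3\la\big)$ and the bracket is increasing in $A$. If concentration occurred then $A\ge A_0$; choosing $\la<\underline\La:=c_2A_0^{\theta-r/p}/c_3$ (which proves (2) for fixed $\al$), or $\al<\overline\La$ small enough that $c_2A_0(\al)^{\theta-r/p}>c_3\la$ (which proves (1) for fixed $\la$, since $A_0(\al)\to\infty$), forces $g(A)>0$, whence $c>0$, contradicting $c<0$. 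Thus no atom survives.

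With $\nu_i=0$ for all $i$ and $\nu_\infty=0$, the Choquard energy converges, $\int\!\int\tfrac{|u_k(x)|^{p_{\mu}^{*}}|u_k(y)|^{p_{\mu}^{*}}}{|x-y|^{\mu}}\to\int\!\int\tfrac{|u(x)|^{p_{\mu}^{*}}|u(y)|^{p_{\mu}^{*}}}{|x-y|^{\mu}}$. Testing $\langle\mc E_{\al,\la}'(u_k),u_k-u\rangle=o(1)$, the Choquard term tends to $0$ (by this convergence and the weak-convergence Lemma for the Riesz-potential nonlinearity) and the $f$-term tends to $0$ (weak continuity), so $(a+b\|u_k\|^{p(\theta-1)})\int|\Delta u_k|^{p-2}\Delta u_k\,\Delta(u_k-u)\to0$. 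Since the coefficient tends to $K$, either $A=0$ and $u_k\to0=u$, or $K>0$ and $\int(|\Delta u_k|^{p-2}\Delta u_k-|\Delta u|^{p-2}\Delta u)\Delta(u_k-u)\to0$, whence the elementary monotonicity inequality for $p\ge2$ gives $u_k\to u$ in $D^{2,p}(\mathbb R^N)$, i.e. the $(PS)_c$ condition. The main obstacle is the concentration step of the second paragraph: simultaneously localizing the two nonlocal objects, the Choquard nonlinearity and the Kirchhoff coefficient $a+b\|u_k\|^{p(\theta-1)}$, through the cut-offs $\phi_{\eps,i}$ and $\psi_R$, and extracting the clean balance $K\omega=\al\nu$ that drives the lower bounds on $\nu_i$ and on $A$.
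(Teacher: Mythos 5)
Your proposal is correct and follows essentially the same route as the paper: boundedness from Lemma \ref{l1}, the concentration--compactness decomposition, localized testing with cut-offs at each atom and at infinity to obtain $a\omega_i\leq\al\nu_i$ (resp.\ $a\omega_\infty\leq\al\,K\zeta_\infty^{p_\mu^*/p^*}$), and then the level identity $\mc E_{\al,\la}(u_k)-\tfrac{1}{2p_\mu^*}\langle\mc E'_{\al,\la}(u_k),u_k\rangle\to c<0$ combined with the blow-up of the atom's lower bound as $\al\to0$ (or the smallness of $\la$) to force a contradiction. The only departures are cosmetic: you factor $A^{r/p}$ and use monotonicity of the bracket where the paper applies Young's inequality to the term $\la\int f|u|^r$, and you close with $\langle\mc E'_{\al,\la}(u_k),u_k-u\rangle\to0$ plus the monotonicity inequality for $\Delta_p^2$ where the paper uses $\langle\mc E'_{\al,\la}(u_k),u_k\rangle$, weak lower semicontinuity and Br\'ezis--Lieb.
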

	
	\begin{proof}
		Let $\{u_k\}\subset D^{2,p}(\mathbb R^N)$ be a $(PS)_c$ sequence. Then by Lemma \ref{l1}, $\{u_k\}$ is a bounded in $D^{2,p}(\mathbb R^N)$.  Therefore we can assume that \(u_k \rightharpoonup u\) in $D^{2,p}(\mathbb R^N)$, $u_k \ra u$ a.e in $\mathbb R^N$. 
		
		\noi By Lemma \ref{c1}, there exists at most countable set $J$, sequence of points $\{z_i\}_{i\in J} \subset \mathbb R^N$
		and families of positive numbers $\{\mu_i : i\in J\}$, $\{\zeta_i: i\in J\}$ and $\{\omega_i: i\in J\}$ such that
		\begin{align*}
			\nu & = \left(\int_{\mathbb R^N}\frac{|u(y)|^{p^{*}_{\mu}}}{|x-y|^{\mu}}\right){|u(x)|^{p^{*}_{\mu}}} +\sum_{i\in J} \nu_{i} \delta_{z_i}, \sum_{i\in J} \nu_i^{\frac{1}{p^{*}_{\mu}}} < \infty,\\
			\omega &\geq |\Delta u|^p + \sum_{i\in J} \omega_{i} \delta_{z_i}\\
			\zeta &\geq  |u|^{p^{*}} + \sum_{i\in J} \zeta_{i} \delta_{z_i}
		\end{align*}
		and
		\begin{align*}
			S_{H,L} \nu_{i}^{\frac{p}{2p^{*}_{\mu}}}\leq \omega_i, \;\mbox{and} \; \nu_{i}^{\frac{N}{2N-\mu}} \leq C(N,\mu)^{\frac{N}{2N-\mu}} \zeta_i.
		\end{align*}
		where \(\delta_z\) is the Dirac-mass of mass \(1\) concentrated at \(z\in\mathbb R^N\).
		
\noi Moreover, we can construct a smooth cut-off function \(\psi_{\e,i}\) centered at \(z_i\) such that
		\[0\leq \psi_{\e,i}(x)\leq 1,\ \psi_{\e,i}(x)=1\;\mbox{in}\; B\left(z_i,\e\right),  \psi_{\e,i}(x)=0\;\mbox{in}\; \mathbb R^N\setminus B (z_i,2\e),\ |\nabla \psi_{\e,i}|\leq \frac{2}{\e},\ |\De \psi_{\e,i}|\leq \frac{2}{\e^2}\]
		for any \(\e>0\) small.
		Since $\{\psi_{\epsilon,i} u_k\}$ is a bounded sequence in $D^{2,p}(\mathbb{R}^N)$, so we have
		\begin{align}\label{n8}
			\lim_{ k\rightarrow \infty}\left((a+b\|u_k\|^{p\theta})\right.\int_{\mathbb R^N}&|\Delta u_k|^{p-2}\Delta u_k \De(\psi_{\epsilon,i} u_k) - \la \int_{\mathbb R^N}f(x)|u_k|^r\psi_{\epsilon, i}\notag\\
   &\quad- \left.\alpha\int_{\mathbb R^N}\int_{\mathbb R^N} \frac{|u_k(x)|^{p^*_\mu}|u_k(y)|^{p^*_\mu}\psi_{\epsilon,i}}{|x-y|^{\mu}}\right) =0.
		\end{align}
		One can easily see that
		\begin{align*}
			\int_{\mathbb R^N}|\Delta u_k|^{p-2}\Delta u_k \De(\psi_{\epsilon,i} u_k)dx = \int_{\mathbb R^N}|\Delta u_k|^{p-2}\Delta u_k \left(\De \psi_{\epsilon,i} u_k + 2\nabla \psi_{\epsilon,i}\cdot\nabla u_k + \psi_{\e,i} \Delta u_k\right)dx.
		\end{align*}
		Now consider
		\begin{align*}
			0\leq &\limsup_{k \rightarrow \infty}\left|\int_{\mathbb R^N}|\Delta u_k|^{p-2}\Delta u_k (\nabla \psi_{\epsilon,i} .\nabla u_k)dx\right|\\
			\leq & \limsup_{k \rightarrow \infty}\left|\int_{\mathbb R^N}|\Delta u_k|^{p-1}|\nabla \psi_{\epsilon,i}||\nabla u_k|dx\right|\notag\\
			\leq & \limsup_{k \rightarrow \infty}\left(\int_{\mathbb R^N}|\Delta u_k|^p dx\right)^{\frac{p-1}{p}}\left(\int_{\mathbb R^N}|\nabla \psi_{\epsilon,i}|^p|\nabla u_k|^p dx\right)^{\frac 1p}\notag\\
			\leq & C\left(\int_{B(z_i, 2\epsilon)}|\nabla \psi_{\epsilon,i}|^p|\nabla u|^p dx\right)^{\frac 1p}\notag\\
			\leq & C \left[\left(\int_{B(z_i, 2\epsilon)}|\nabla \psi_{\epsilon,i}|^{N}dx\right)^{\frac pN}\left(\int_{B(z_i, 2\epsilon)}|\nabla u|^{\frac{Np}{N-p}} dx\right)^{\frac{N-p}{N}}\right]^{\frac{1}{p}}\notag\\
			\leq & C \left(\int_{B(z_i, 2\epsilon)}|\nabla \psi_{\epsilon,i}|^{N}dx\right)^{\frac 1N}\left(\int_{B(z_i, 2\epsilon)}|\nabla u|^{\frac{Np}{N-p}} dx\right)^{\frac{N-p}{Np}}\longrightarrow 0 \;\text{as}\; \epsilon \longrightarrow 0.
		\end{align*}
		Also using the same idea as above, we obtain
		\begin{align*}
			0\leq &\limsup_{k \rightarrow \infty}\left|\int_{\Om}|\Delta u_k|^{p-2}\Delta u_k (u_k\De \psi_{\epsilon,i} )dx\right| \leq \limsup_{k \rightarrow \infty}\left(\int_{\mathbb R^N}|\Delta u_k|^p dx\right)^{\frac{p-1}{p}}\left(\int_{\mathbb R^N}|\nabla \psi_{\epsilon,i}|^p|u_k|^p dx\right)^{\frac 1p}\notag\\
			\leq & C\left(\int_{B(z_i, 2\epsilon)}|\nabla \psi_{\epsilon,i}|^p|u|^p dx\right)^{\frac 1p}\notag\\
			\leq & C \left[\left(\int_{B(z_i, 2\epsilon)}|\nabla \psi_{\epsilon,i}|^{\frac N2}dx\right)^{\frac{2p}{N}}\left(\int_{B(z_i, 2\epsilon)}| u|^{p^*} dx\right)^{\frac{p}{p^*}}\right]^{\frac{1}{p}}\notag\\
			\leq & C\left(\int_{B(z_i, 2\epsilon)}|\nabla \psi_{\epsilon,i}|^{\frac N2}dx\right)^{\frac{2}{N}}\left(\int_{B(z_i, 2\epsilon)}|u|^{p^*} dx\right)^{\frac{1}{p^*}}\longrightarrow 0 \;\text{as}\; \epsilon \longrightarrow 0.
		\end{align*}
		Notice that
		\begin{align*}
			\left|\int_{\mathbb R^N} f(x)|u_k|^{r} \psi_{\e,i} dx \right| \leq& \int_{B_{\e}(z_i)} |f(x)||u_k|^{r} dx \leq \|f\|_{\frac{p^*}{p^*-r}}\left(\int_{B_{\e}(z_i)}|u_k|^{p^*} dx\right)^{\frac{r}{p^*}}\\
			\rightarrow & \|f\|_{\frac{p^*}{p^*- r}}\left(\int_{B_{\e}(z_i)}|u|^{p^*} dx\right)^{\frac{r}{p^*}}\longrightarrow 0 \;\text{as}\; \epsilon \longrightarrow 0.
		\end{align*}
		Thus
		\begin{align}\label{n12}
			\lim_{\e\ra 0} \lim_{k\ra \infty}\int_{\mathbb R^N} f(x)|u_k|^{r} \psi_{\e,i} dx =0.
		\end{align}
		Now, combining \eqref{n8}-\eqref{n12}, we deduce
		\begin{align*}
			0&= \lim_{\e\ra 0}\lim_{k\ra \infty} \langle \mc E^{\prime}_{\al,\la}(u_k), \psi_{\e,i} u_k \rangle\\
			&= \lim_{\e\ra 0}\lim_{k\ra\infty}\left\{(a+b \|u_k\|^{p\theta}) \int_{\mathbb R^N} |\Delta u_k|^{p-2}\Delta u_k \De (\psi_{\epsilon,i} u_k)  - \la \int_{\mathbb R^N} f(x) |u_k|^{r} \psi_{\e,i}dx \right.\\
			&\quad  \left.- \al\int_{\mathbb R^N}\int_{\mathbb R^N} \frac{|u_k(x)|^{p^*_\mu}|u_k(y)|^{p^*_\mu}\psi_{\epsilon,i}}{|x-y|^{\mu}}dxdy \right\}\\
			&\geq \lim_{\e\ra 0}\lim_{k\ra\infty}\left\{(a+b \|u_k\|^{p\theta}) \int_{\mathbb R^N}|\Delta u_k|^{p} \psi_{\epsilon,i}  - \al\int_{\mathbb R^N}\int_{\mathbb R^N} \frac{|u_k(x)|^{p^*_\mu}|u_k(y)|^{p^*_\mu}\psi_{\epsilon,i}}{|x-y|^{\mu}} dxdy\right\}\\
			& \geq \lim_{\e\ra 0}\lim_{k\ra\infty} \left\{ a \int_{\mathbb R^N} |\Delta u_k|^{p} \psi_{\e,i} dx - \al \int_{\mathbb R^N}\int_{\mathbb R^N} \frac{|u_k(x)|^{p^*_\mu}|u_k(y)|^{p^*_\mu}\psi_{\epsilon,i}}{|x-y|^{\mu}} dxdy \right\}\\
			& \geq \lim_{\e\ra 0}  \left\{a \int_{\mathbb R^N} \psi_{\e,i} d \omega - \al \int_{\mathbb R^N} \psi_{\e,i} d\nu \right\}\\
			&\geq a \omega_i- \al  \nu_i.
		\end{align*}
		Therefore, $a \omega_i\leq \al \nu_i$. Together this with the fact that $S_{H,L} \nu_{i}^{\frac{p}{2 p_{\mu}^*}} \leq\omega_i$, we obtain 
		\[ \mbox{either}\quad \omega_{i}\geq \left( a \alpha^{-1}  S_{H,L}^{\frac{2N-\mu}{N-2p}}\right)^{\frac{N-2p}{N-\mu +2p}}\quad \mbox{or}\quad \omega_i=0. \]
		Now, we claim that the first case can not occur. Suppose not, then there exists $i_0\in J$ such that $\omega_{i_0}\geq \left( a \alpha^{-1}  S_{H,L}^{\frac{2N-\mu}{N-2p}}\right)^{\frac{N-2p}{N-\mu +2p}}$.
		Equation \eqref{e3}, the H\"{o}lder inequality, the Sobolev inequality and the Young inequality imply that
		\begin{align*}
			\la \int_{\mathbb R^N} f(x) |u|^{r} dx &\leq \la \|f\|_{\frac{p^*}{p^*-r}} S^{-\frac{r}{p}} \|u\|^{r}= \left(\left[\left(\frac{1}{p}- \frac{1}{2p_{\mu}^*}\right)\frac{a}{r}\left(\frac{1}{r}- \frac{1}{2p_{\mu}^*}\right)^{-1}\right]^{\frac{r}{p}}\|u\|^{r}\right)\notag\\
			& \quad\quad \left(\left[\left(\frac{1}{p}- \frac{1}{2p_{\mu}^*}\right) \frac{a}{r}\left(\frac{1}{r}- \frac{1}{2p_{\mu}^*}\right)^{-1}\right]^{\frac{-r}{p}} \la \|f\|_{\frac{p^*}{p^*-r}} S^{-\frac{r}{p}}\right)\notag\\
			&\leq \left(\frac{1}{p}- \frac{1}{2p_{\mu}^*}\right) \frac{a}{p}\left(\frac{1}{r}- \frac{1}{2p_{\mu}^*}\right)^{-1} \|u\|^p\notag\\
			& \quad \quad + \frac{p-r}{p}\left[\left(\frac{1}{r}- \frac{1}{2p_{\mu}^*}\right) \frac{r}{aS}\left(\frac{1}{p}- \frac{1}{2p_{\mu}^*}\right)^{-1}\right]^{\frac{r}{p-r}} {\la}^{\frac{p}{p-r}} \|f\|_{\frac{p^*}{p^*-r}}^{\frac{p}{p-r}}
		\end{align*}
		Using this fact, we have
		{\small \begin{align}\label{e6}
				0&>c= \lim_{k\ra \infty}\left(\mc E_{\al, \la}(u_k) - \frac{1}{2 p_{\mu}^*} \langle \mc E^{\prime}_{\al,\la}(u_k), u_k \rangle \right)\notag\\
				&= \lim_{k\ra\infty}\left\{\left(\frac{1}{p}- \frac{1}{2p_{\mu}^*}\right) a \|u_k\|^{p} + \left(\frac{1}{p\theta}- \frac{1}{2p_{\mu}^*}\right) b \|u_k\|^{p\theta} - \left(\frac{1}{r}- \frac{1}{2p_{\mu}^*}\right) \la \int_{\mathbb R^N} f(x) |u_k|^{r} dx \right\}\notag\\
				& \geq \left(\frac{1}{p}- \frac{1}{2p_{\mu}^*}\right) a \left(\|u\|^{p}+ \sum_{i\in J} \omega_i \right)- \left(\frac{1}{r}- \frac{1}{2p_{\mu}^*}\right) \la \int_{\mathbb R^N} f(x) |u_k|^{r} dx\notag \\
				&\geq \left(\frac{1}{p}- \frac{1}{2p_{\mu}^*}\right) a \omega_{i_0} - \frac{p-r}{p}\left(\frac{1}{r}- \frac{1}{2p_{\mu}^*}\right)\left[\left(\frac{1}{r}- \frac{1}{2p_{\mu}^*}\right) \frac{r}{aS}\left(\frac{1}{p}- \frac{1}{2p_{\mu}^*}\right)^{-1}\right]^{\frac{r}{p-r}} {\la}^{\frac{p}{p-r}} \|f\|_{\frac{p^*}{p^*-r}}^{\frac{p}{p-r}}\notag\\
				&\geq \left(\frac{1}{p}- \frac{1}{2p_{\mu}^*}\right)\left(a  S_{H,L}\alpha^{-\frac{N-2p}{2N-\mu}}\right)^{\frac{2N-\mu}{N-\mu +2p}} - \frac{p-r}{p}\left(\frac{1}{r}- \frac{1}{2p_{\mu}^*}\right)\left[\left(\frac{1}{r}- \frac{1}{2p_{\mu}^*}\right) \frac{r}{aS}\left(\frac{1}{p}- \frac{1}{2p_{\mu}^*}\right)^{-1}\right]^{\frac{r}{p-r}}\notag\\&\quad\quad\times {\la}^{\frac{p}{p-r}} \|f\|_{\frac{p^*}{p^*-r}}^{\frac{p}{p-r}}.
		\end{align}}
       
        \noi Thus, for any $\al>0$, we choose $\la_1>0$ so small that for every $\la\in(0,\la_1)$, the right hand side of \eqref{e6} is greater than zero, which gives a contradiction.
		
		\noi Similarly, if for any $\la>0$, we choose $\al_1>0$ so small that for every $\al\in(0,\al_1)$, the right hand side of \eqref{e6} is greater than zero, which gives a required contradiction. Consequently, $\omega_{i}=0$ for all $i\in J$.
		
	\noi	To obtain the possible concentration of mass at infinity, we can define a cut-off function \(\psi_{R}\in C^{\infty}(\mathbb R^N)\) such that \(\psi_{R}(x)=0\) on \(|x|<R\), \(\psi_{R}(x)=1\) on \(|x|>R+1\), \(|\nabla \psi_{R}|\leq \frac{2}{R}\) and \(|\De \psi_{R}|\leq \frac{2}{R^2}\).
		
	\noi	Now applying the Hardy-Littlewood-Sobolev, H\"{o}lder's inequalitity, we have
		\begin{align*}
			\nu_{\infty}&= \lim_{R\ra \infty} \lim_{k\ra \infty} \int_{\mathbb R^N}\left(\int_{\mathbb R^N}\frac{|u_k(y)|^{p^*_{\mu}}}{|x-y|^{\mu}} dy\right) |u_k(x)|^{p^*_{\mu}} \psi_{R}(x) dx\\
			&\leq C(N,\mu) \lim_{R\ra \infty} \lim_{k\ra \infty} |u_k|^{p^{*}_{\mu}}_{p^*} \left(\int_{\mathbb R^N}|u_k(x)|^{p^*} \psi_{R} dx\right)^{\frac{p^{*}_{\mu}}{p^{*}}}\\
			&\leq K \zeta_{\infty}^{\frac{p_{\mu}^{*}}{p^*}}.
		\end{align*}
		Using the relation \(\langle \mc E_{\al,\la}^{\prime}(u_k), u_k \psi_{R}\rangle \ra 0\), we obtain
		\begin{align}\label{s1.1}
			0&= \lim_{R\ra\infty}\lim_{k\ra \infty} \langle \mc E^{\prime}_{\al,\la}(u_k), \psi_{R} u_k \rangle\notag\\
			&= \lim_{R\ra\infty}\lim_{k\ra\infty}\left\{(a+b \|u_k\|^{p\theta}) \int_{\mathbb R^N} |\Delta u_k|^{p-2} \Delta u_k \De (\psi_R u_k)  - \la \int_{\mathbb R^N} f(x) |u_k|^{r} \psi_{R}dx \right.\notag\\
			&\quad  \left.- \al\int_{\mathbb R^N}\int_{\mathbb R^N} \frac{|u_k(x)|^{p^*_\mu}|u_k(y)|^{p^*_\mu}\psi_{R}}{|x-y|^{\mu}} \right\}\notag\\
			& \geq \lim_{R\ra\infty}\lim_{k\ra\infty} \left\{ a \int_{\mathbb R^N} |\Delta u_k|^{p} \psi_{R} dx - \al \int_{\mb R^N}\int_{\mb R^N} \frac{|u_k(x)|^{p^*_\mu}|u_k(y)|^{p^*_\mu}\psi_R}{|x-y|^{\al}} dxdy \right\}\notag\\
			& \geq \lim_{R\ra\infty}  \left\{a \int_{\mathbb R^N} \psi_{R} d \omega - \al \int_{\mathbb R^N} \psi_{R} d\nu \right\}\notag\\
            &\geq a \omega_{\infty} - \al  \nu_{\infty}\\
			&\geq a \omega_{\infty} - \al K \zeta_{\infty}^{\frac{p_{\mu}^*}{p^*}}.\notag
		\end{align}
		Therefore, $a \omega_{\infty} \leq \al K \zeta_{\infty}^{\frac{p_{\mu}^*}{p^*}}$. Combining this with Lemma \ref{1e0}, we obtain 
		\[ \mbox{either}\quad \omega_{\infty}\geq \left( a \alpha^{-1} K^{-1}  S^{\frac{p^*_\mu}{p}}\right)^\frac{p}{p^*_\mu-p}\quad \mbox{or}\quad \omega_{\infty}=0.  \]
	\noi	Now,
		\begin{align}\label{e7}
			0>c&\geq \left(\frac{1}{2p}- \frac{1}{4p_{\mu}^*}\right)\left(aS\right)^{\frac{p^*_\mu}{p^{*}_{\mu}-p}}(K\al)^{-\frac{p}{p^{*}_{\mu}-p}} \notag\\
   & \quad - \frac{p-r}{p}\left(\frac{1}{r}- \frac{1}{2p_{\mu}^*}\right)\left[\left(\frac{1}{r}- \frac{1}{2p_{\mu}^*}\right) \frac{r}{aS}\left(\frac{1}{p}- \frac{1}{2p_{\mu}^*}\right)^{-1}\right]^{\frac{r}{p-r}} {\la}^{\frac{p}{p-r}} \|f\|_{\frac{p^*}{p^*-r}}^{\frac{p}{p-r}}.
		\end{align}
		Thus, for any $\al>0$, we choose $\la_2>0$ so small that for every $\la\in(0,\la_2)$, the right hand side of \eqref{e7} is greater than zero, which gives a contradiction.	
\noi		Similarly, if for any $\la>0$, we choose $\al_2>0$ so small that for every $\al\in(0,\al_2)$, the right hand side of \eqref{e7} is greater than zero, which gives a required contradiction. Consequently, $\omega_{\infty}=0$.

		\noi From the above arguments, Take $\overline{\Lambda}=\min\{\la_1, \la_2\}$ and $\underline{\Lambda}=\min\{\al_1, \al_2\}$.
		
		\noi Then for any $c<0$, $\al>0$, we have \(\omega_i=0\) for all $i\in J$ and $\omega_{\infty}=0$ for all \(\la\in (0, \overline{\Lambda})\).
		
	\noi	Similarly, for any $c<0$, $\la>0$, we have \(\omega_i=0\) for all $i\in J$ and $\omega_{\infty}=0$ for all \(\al\in (0, \underline{\Lambda})\).
		
	\noi	Hence,
		\begin{align*}
			\lim_{k\ra \infty}\int_{\mathbb R^N}\int_{\mathbb R^N} \frac{|u_k(x)|^{p^*_\mu}|u_k(y)|^{p^*_\mu}}{|x-y|^{\mu}} dx dy = \int_{\mathbb R^N}\int_{\mathbb R^N} \frac{|u(x)|^{p^*_\mu}|u(y)|^{p^*_\mu}}{|x-y|^{\mu}} dx dy,
		\end{align*}
		
		\begin{align*}
			\lim_{k\ra \infty}\int_{\mathbb R^N} f(x) (|u_k(x)|^{r} - |u(x)|^{r}) dx \leq \|f\|_{\frac{p^*}{p^*-r}} \||u_k(x)|^{r} - |u(x)|^{r}\|_{\frac{p^*}{r}} = 0.
		\end{align*}
Since $(\|u_k\|)_{k}$ is bounded and $\mc E^{\prime}_{\al,\la}(u)=0$, the weak lower semicontinuity of the norm and the Br{\'e}zis-Lieb Lemma yield as $(k\ra \infty)$
		{\small\begin{align*}
			o(1)&= \langle \mc E_{\al,\la}^{\prime}(u_k),u_k \rangle = a \|u_k\|^{p} + b \|u_k\|^{p\theta} - \la \int_{\mathbb R^N} f(x) |u_k|^{r} dx - \al\int_{\mathbb R^N}\int_{\mathbb R^N} \frac{|u_k(x)|^{p^*_\mu}|u_k(y)|^{p^*_\mu}}{|x-y|^{\mu}} dx dy  \\
			&\geq a(\|u_k - u\|^p) + a \|u\|^p + b \|u\|^{p\theta} - \la \int_{\mathbb R^N} f(x) |u|^{r} dx - \al\int_{\mathbb R^N}\int_{\mathbb R^N} \frac{|u(x)|^{p^*_\mu}|u(y)|^{p^*_\mu}}{|x-y|^{\mu}} dx dy \\
			&= a \|u_k -u\|^p + o(1).
		\end{align*}}
		
        \noi Thus $\{u_k\}$ converges strongly to $u$ in $D^{2,p}(\mathbb R^N)$. This completes the proof of the Lemma.\qed
\end{proof}
\begin{lemma}\label{ce}
Let $r=p$ and $p\theta < 2 p_{\mu}^{*}$. Suppose that \(\{u_k\}\) is a $(PS)_c$ sequence for $\mc E_{\al,\la}$ in $D^{2,p}(\mathbb R^N)$, with $$c<c^*:=min\{c_1,c_2\},$$
where $c_1:=\left(\frac{1}{ p}- \frac{1}{2p^*_\mu}\right)\left(a  S_{H,L}
\alpha^{-\frac{N-2p}{2N-\mu}}
\right)^{\frac{2N-\mu}{N-\mu +2p}}$ and  $c_2:=\left(\frac{1}{p} -\frac{1}{2p_{\mu}^*}\right)\left( aS\right)^\frac{p^*_\mu}{p^*_\mu-p} (\al K)^{-\frac{p}{p^*_\mu-p}}$.\\
\noi Then for all \(\la\in\left(0,aS\|f\|_{\frac{p^*}{p^*-p}}^{-1}\right)\), $\{u_k\}$ satisfies the $(PS)_c$  condition.
\end{lemma}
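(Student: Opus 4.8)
The plan is to mirror the structure of Lemma~\ref{p11}, adjusting each step to the fact that for $r=p$ the subcritical term is homogeneous of degree $p$, exactly like the leading term $a\|u\|^p$. Since $0<\la<aS\|f\|_{\frac{p^*}{p^*-p}}^{-1}$, Case~2 of Lemma~\ref{l1} shows the $(PS)_c$ sequence $\{u_k\}$ is bounded in $D^{2,p}(\R^N)$, so up to a subsequence $u_k\rightharpoonup u$ in $D^{2,p}(\R^N)$ and $u_k\to u$ a.e. The concentration--compactness lemma of Section~2 then furnishes the measures $\omega,\zeta,\nu$ and the relations \eqref{c1}--\eqref{l4} at points $\{z_i\}_{i\in J}$, together with the masses at infinity $\omega_\infty,\zeta_\infty,\nu_\infty$ obeying $S\zeta_\infty^{p/p^*}\le\omega_\infty$ (Lemma~\ref{1e0}).

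First I would exclude concentration at each $z_i$. With the cut-off $\psi_{\e,i}$ from Lemma~\ref{p11}, the two cross terms containing $\nabla\psi_{\e,i}$ and $\Delta\psi_{\e,i}$ vanish as $k\to\infty$ and then $\e\to0$, and the subcritical contribution is controlled by $\int f|u_k|^p\psi_{\e,i}\le\|f\|_{\frac{p^*}{p^*-p}}\big(\int_{B_{2\e}(z_i)}|u_k|^{p^*}\big)^{p/p^*}\to0$, exactly as in \eqref{n12}. Plugging these into $\langle\mc E'_{\al,\la}(u_k),\psi_{\e,i}u_k\rangle\to0$ gives $a\omega_i\le\al\nu_i$, which with $S_{H,L}\nu_i^{p/2p_\mu^*}\le\omega_i$ forces $\omega_i=0$ or $\omega_i\ge(a\al^{-1}S_{H,L}^{\frac{2N-\mu}{N-2p}})^{\frac{N-2p}{N-\mu+2p}}$ for each $i$. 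To discard the second case I evaluate $c=\lim_k(\mc E_{\al,\la}(u_k)-\tfrac{1}{2p_\mu^*}\langle\mc E'_{\al,\la}(u_k),u_k\rangle)$. Here the essential new observation is that $f\in L^{p^*/(p^*-p)}=(L^{p^*/p})'$ and $|u_k|^p\rightharpoonup|u|^p$ weakly in $L^{p^*/p}$, so $\int f|u_k|^p\to\int f|u|^p$; dropping the nonnegative Kirchhoff term and using \eqref{l2} I get $c\ge(\tfrac1p-\tfrac{1}{2p_\mu^*})\big[a(\|u\|^p+\omega_{i_0})-\la\int f|u|^p\big]$. The decisive inequality $\la\int f|u|^p\le\la\|f\|_{\frac{p^*}{p^*-p}}S^{-1}\|u\|^p\le a\|u\|^p$ holds exactly because $\la<aS\|f\|^{-1}$, absorbing the subcritical term into $a\|u\|^p$ with no Young-type correction (contrast \eqref{e6}); thus $c\ge(\tfrac1p-\tfrac{1}{2p_\mu^*})a\omega_{i_0}\ge c_1$, contradicting $c<c_1$. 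Hence $\omega_i=0$ for all $i\in J$.

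The concentration at infinity is handled by the same mechanism with the cut-off $\psi_R$: the subcritical tail $\int f|u_k|^p\psi_R\to0$ as $R\to\infty$ by integrability of $f$, so $\langle\mc E'_{\al,\la}(u_k),\psi_R u_k\rangle\to0$ yields $a\omega_\infty\le\al\nu_\infty\le\al K\zeta_\infty^{p_\mu^*/p^*}$, and combined with $S\zeta_\infty^{p/p^*}\le\omega_\infty$ this leaves $\omega_\infty=0$ or $\omega_\infty\ge(a\al^{-1}K^{-1}S^{p_\mu^*/p})^{p/(p_\mu^*-p)}$. The identical level estimate, using once more $\la\int f|u|^p\le a\|u\|^p$, produces $c\ge(\tfrac1p-\tfrac{1}{2p_\mu^*})a\omega_\infty\ge c_2$, against $c<c_2$. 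Since $c<c^*=\min\{c_1,c_2\}$, all finite masses and $\omega_\infty$ vanish.

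Finally, with every concentration mass gone, $\int_{\R^N}\int_{\R^N}\frac{|u_k(x)|^{p_\mu^*}|u_k(y)|^{p_\mu^*}}{|x-y|^\mu}\to\int_{\R^N}\int_{\R^N}\frac{|u(x)|^{p_\mu^*}|u(y)|^{p_\mu^*}}{|x-y|^\mu}$ and $\int f|u_k|^p\to\int f|u|^p$. Passing to the limit in the weak formulation (invoking the first lemma of Section~2 for the convolution term) shows $u$ solves $(\mc P_{\al,\la})$ with coefficient $a+bA$, where $A:=\lim_k\|u_k\|^{p(\theta-1)}$; testing with $u$ and comparing with $\langle\mc E'_{\al,\la}(u_k),u_k\rangle\to0$ gives, with $B:=\lim_k\|u_k\|^p$ and $t:=\|u\|^p$, the identity $(B-t)(a+bB^{\theta-1})=0$, so $B=t$ since $a>0$, i.e.\ $\|u_k\|\to\|u\|$ (and retroactively $A=\|u\|^{p(\theta-1)}$). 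Weak convergence plus convergence of norms in the uniformly convex space $D^{2,p}(\R^N)$ ($p\ge2$) then gives $u_k\to u$ strongly, which is the asserted $(PS)_c$ property. I expect the main obstacle to be precisely this degree-$p$ term: it scales like $a\|u\|^p$ and cannot be absorbed by the Young inequality used when $r<p$, so the whole argument hinges on the sharp bound $\la<aS\|f\|^{-1}$, which is exactly what collapses the concentration thresholds to the clean values $c_1$ and $c_2$.
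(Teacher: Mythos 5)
Your proposal is correct and follows essentially the same route as the paper: boundedness via Lemma \ref{l1}, the concentration--compactness lemma to rule out atoms and mass at infinity under $c<c^*$ by absorbing the degree-$p$ term through $\la<aS\|f\|_{\frac{p^*}{p^*-p}}^{-1}$, and then strong convergence. The only (harmless) deviations are that you subtract $\tfrac{1}{2p_\mu^*}\langle\mc E'_{\al,\la}(u_k),u_k\rangle$ and discard the nonnegative Kirchhoff term where the paper subtracts $\tfrac{1}{p\theta}\langle\mc E'_{\al,\la}(u_k),u_k\rangle$ and keeps both $\omega_{i_0}$ and $\nu_{i_0}$ (both computations land on the same thresholds $c_1,c_2$), and that you close with norm convergence plus uniform convexity instead of the weak lower semicontinuity/Br\'ezis--Lieb estimate the paper borrows from Lemma \ref{p11}.
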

\begin{proof}
For \(u\in D^{2,p}(\mathbb R^N)\) and $r=p$, the H\"{o}lder inequality and Sobolev inequality  imply that
\begin{align*}
		\int_{\mb R^N} f(x)|u|^{p} dx \leq S^{-1} \|f\|_{\frac{p^*}{p^*-p}} \|u\|^p.
	\end{align*}
	Let $\{u_k\}$ be a $(PS)_c$	for $\mc E_{\al,\la}$ for $c<c^*$. Then $\{u_k\}$ is bounded from Lemma \ref{l1}. Now using the last estimate, for  all \(\la\in\left(0,aS\|f\|_{\frac{p^*}{p^*-p}}^{-1}\right)\), arguing similarly as in Lemma \ref{p11}, in substitute of \eqref{e6}, we obtain
\begin{align*}
		c^*> c &= \lim_{k\ra \infty} \left(\mc E_{\al,\la}(u_k) - \frac{1}{p\theta }\left\langle \mc E^{\prime}_{\al,\la}(u_k), u_k\right\rangle\right)\\
        &\geq \left(\frac{1}{p}- \frac{1}{p\theta }\right)\left[a||u||^p+a\displaystyle\sum_{i\in J} \omega_i-\la S^{-1} \|f\|_{\frac{p^{*}}{p^*-p}}\|u\|^p \right]\\&\quad +\alpha\left(\frac{1}{p\theta }- \frac{1}{ 2p^*_\mu}\right)\left[\int_{\mathbb R^N}\int_{\mathbb R^N} \frac{|u(x)|^{p^*_\mu}|u(y)|^{p^*_\mu}}{|x-y|^{\mu}} dx dy+\displaystyle\sum_{i\in J} \nu_i\right] \\
&\geq \left(\frac{1}{p}- \frac{1}{p\theta }\right)a\omega_{i_{0}}+ \left(\frac{1}{p}- \frac{1}{p\theta }\right) \left(a- \la S^{-1} \|f\|_{\frac{p^{*}}{p^*-p}}\right)\|u\|^p+\alpha\left(\frac{1}{p\theta }- \frac{1}{ 2p^*_\mu}\right)\nu_{i_{0}}\\
&\geq\left(\frac{1}{p}- \frac{1}{p\theta }\right)\left(a  S_{H,L}
\alpha^{-\frac{N-2p}{2N-\mu}}
\right)^{\frac{2N-\mu}{N-\mu +2p}} +\left(\frac{1}{p\theta }- \frac{1}{ 2p^*_\mu}\right)\left(a  S_{H,L}
\alpha^{-\frac{N-2p}{2N-\mu}}
\right)^{\frac{2N-\mu}{N-\mu +2p}}\\
&=\left(\frac{1}{ p}- \frac{1}{ 2p^*_\mu}\right)\left(a  S_{H,L}
\alpha^{-\frac{N-2p}{2N-\mu}}
\right)^{\frac{2N-\mu}{N-\mu +2p}}:=c_1.,
	\end{align*}
    \noi Following the same argument as in Lemma \ref{p11} for concentration of mass at infinity and using equation \eqref{s1.1}, we obtain
\begin{align*}
   c &= \lim_{n\ra \infty} \left\{ \mc E_{\al,\la}(u_k) - \frac{1}{p\theta}\left\langle \mc E^{\prime}_{\al,\la}(u_k), u_k \right\rangle\right\}\nonumber\\
          &\geq  \left(\frac1p -\frac{1}{p\theta}\right) \left[a
   -\la S^{-1}||f||_{\frac{p^*}{p^*-p}}\right]||u||^p + a\left(\frac{1}{p} -\frac{1} 
    {p\theta}\right) \ \omega_{\infty}+ \al\left(\frac{1}{p\theta} -\frac{1}{2p_{\mu}^*}\right) \nu_{\infty}\nonumber\\
    &\geq a\left(\frac{1}{p} -\frac{1} 
    {p\theta}\right) \ \omega_{\infty}+ \left(\frac{1}{p\theta} -\frac{1}{2p_{\mu}^*}\right) a\omega_{\infty}\nonumber\\
          &\geq \left(\frac{1}{p} -\frac{1}{2p_{\mu}^*}\right)\left( aS\right)^\frac{p^*_\mu}{p^*_\mu-p} (\al K)^{-\frac{p}{p^*_\mu-p}}:= c_2.
\end{align*}
	which is absurd since $c<c^*:=min\{c_1, c
     _2\}$. Now the rest of the proof follows in similar manner as in the proof of Lemma \ref{l2}.
\end{proof}
\begin{lemma}\label{cc}
Let $p\theta \leq r<p^*$ and $p\theta <2p_{\mu}^*$. Suppose that $\{u_k\}$ be a $(PS)_c$ sequence for $\mc E_{\al,\la}$ in $D^{2,p}(\mathbb R^N)$ with $c<c^{*}
,$ where $c^*$ is defined same as in Lemma \ref{ce}. Then $\{u_k\}$ satisfies $(PS)_c$ condition.
\end{lemma}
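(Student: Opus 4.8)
The plan is to run the same concentration--compactness argument as in Lemmas~\ref{p11} and~\ref{ce}, modifying only the final energy balance to exploit the favorable range $p\theta\le r<p^*$. First I would use the third case of Lemma~\ref{l1} (which covers $p\theta\le r<p^*$) to conclude that $\{u_k\}$ is bounded in $D^{2,p}(\mathbb R^N)$; passing to a subsequence, $u_k\rightharpoonup u$ in $D^{2,p}(\mathbb R^N)$ and $u_k\to u$ a.e., so the concentration--compactness Lemma produces an at most countable set $\{z_i\}_{i\in J}$ with masses $\omega_i,\zeta_i,\nu_i$ obeying $S_{H,L}\nu_i^{p/(2p_\mu^*)}\le\omega_i$, together with the quantities $\omega_\infty,\nu_\infty,\zeta_\infty$ at infinity.

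Next I would localize at each $z_i$ exactly as in Lemma~\ref{p11}, testing with $\psi_{\e,i}u_k$. The cross terms generated by $\Delta(\psi_{\e,i}u_k)$ vanish as $\e\to0$ by H\"older, and the subcritical term $\la\int_{\mathbb R^N}f|u_k|^r\psi_{\e,i}$ also vanishes because $r<p^*$ and $f\in L^{p^*/(p^*-r)}$, so the estimate \eqref{n12} carries over unchanged. Letting $k\to\infty$ and then $\e\to0$ in $\langle\mc E_{\al,\la}'(u_k),\psi_{\e,i}u_k\rangle\to0$ yields $a\omega_i\le\al\nu_i$, which together with $S_{H,L}\nu_i^{p/(2p_\mu^*)}\le\omega_i$ forces the dichotomy: either $\omega_i=0$ or $\omega_i\ge\big(a\al^{-1}S_{H,L}^{(2N-\mu)/(N-2p)}\big)^{(N-2p)/(N-\mu+2p)}$. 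The same computation with the cut-off $\psi_R$, using $\nu_\infty\le K\zeta_\infty^{p_\mu^*/p^*}$ (Hardy--Littlewood--Sobolev) and $S\zeta_\infty^{p/p^*}\le\omega_\infty$ (Lemma~\ref{1e0}), gives $a\omega_\infty\le\al\nu_\infty$ and the corresponding dichotomy for $\omega_\infty$.

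The decisive step is to eliminate the large alternatives by an energy estimate, and here the choice of algebraic combination matters. Since $p\theta\le r<p^*<2p_\mu^*$ and $\theta\ge1$, the combination $\mc E_{\al,\la}(u_k)-\tfrac{1}{p\theta}\langle\mc E_{\al,\la}'(u_k),u_k\rangle$ — the same one as in Lemma~\ref{ce} — has \emph{all} coefficients non-negative: $a(\tfrac1p-\tfrac1{p\theta})\ge0$ multiplies $\|u_k\|^p$, $\al(\tfrac1{p\theta}-\tfrac1{2p_\mu^*})>0$ multiplies the Choquard term, and, crucially, $\la(\tfrac1{p\theta}-\tfrac1r)\ge0$ multiplies $\int f|u_k|^r$ (using $f\ge0$). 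This is the one place where $p\theta\le r$ is used, and it makes the argument cleaner than in Lemma~\ref{ce}, where the $f$-term carried an unfavorable sign. Retaining only the atom $z_{i_0}$ and discarding the remaining non-negative contributions,
\[
c\ \ge\ a\Big(\tfrac1p-\tfrac1{p\theta}\Big)\omega_{i_0}+\al\Big(\tfrac1{p\theta}-\tfrac1{2p_\mu^*}\Big)\nu_{i_0}\ \ge\ \Big(\tfrac1p-\tfrac1{2p_\mu^*}\Big)a\,\omega_{i_0}\ \ge\ c_1,
\]
where the middle step uses $a\omega_{i_0}\le\al\nu_{i_0}$ and the last the lower bound on $\omega_{i_0}$; this contradicts $c<c^*\le c_1$, forcing $\omega_i=0$ for every $i\in J$. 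Repeating verbatim with $\omega_\infty,\nu_\infty$ in place of $\omega_{i_0},\nu_{i_0}$ gives $c\ge(\tfrac1p-\tfrac1{2p_\mu^*})a\,\omega_\infty\ge c_2$, contradicting $c<c^*\le c_2$, so $\omega_\infty=0$.

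Finally, absence of concentration gives $\int\!\int|x-y|^{-\mu}|u_k|^{p_\mu^*}|u_k|^{p_\mu^*}\to\int\!\int|x-y|^{-\mu}|u|^{p_\mu^*}|u|^{p_\mu^*}$, while $\int f|u_k|^r\to\int f|u|^r$ as in Lemma~\ref{p11}. Substituting these into $\langle\mc E_{\al,\la}'(u_k),u_k\rangle\to0$ and using the Br\'ezis--Lieb splitting $\|u_k\|^p=\|u_k-u\|^p+\|u\|^p+o(1)$, the weak lower semicontinuity of $\|\cdot\|^{p\theta}$, and $\langle\mc E_{\al,\la}'(u),u\rangle=0$, one obtains $a\|u_k-u\|^p\to0$, hence $u_k\to u$ strongly, exactly as at the end of Lemma~\ref{p11}. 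I expect the only genuinely delicate ingredient to be the localization of the \emph{nonlocal} Choquard term — identifying $\lim_{\e\to0}\lim_{k\to\infty}\int\!\int|x-y|^{-\mu}|u_k|^{p_\mu^*}|u_k|^{p_\mu^*}\psi_{\e,i}$ with $\nu_i$ — but this is precisely what the concentration--compactness Lemma already supplies, so for the present range the proof reduces to the sign bookkeeping made transparent by the combination $\tfrac1{p\theta}\langle\mc E_{\al,\la}',\cdot\rangle$.
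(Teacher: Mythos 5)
Your proposal is correct and follows essentially the same route as the paper: boundedness from Lemma \ref{l1}, the concentration--compactness dichotomy at the points $z_i$ and at infinity, elimination of the atoms via the level restriction $c<c^*=\min\{c_1,c_2\}$, and the strong-convergence conclusion as in Lemma \ref{p11}. The only (cosmetic) difference is that you subtract $\tfrac{1}{p\theta}\langle\mc E_{\al,\la}'(u_k),u_k\rangle$, which kills the $b$-term and leaves a non-negative $f$-term by $f\geq 0$ and $p\theta\leq r$, whereas the paper subtracts $\tfrac{1}{r}\langle\mc E_{\al,\la}'(u_k),u_k\rangle$ so that the $f$-term cancels identically; both choices yield the same lower bound $\left(\tfrac{1}{p}-\tfrac{1}{2p_{\mu}^*}\right)a\,\omega_{i_0}$ and hence the same contradiction.
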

\begin{proof}
	Let $\{u_k\}$ be a $(PS)_c$	for $\mc E_{\al,\la}$ for $c<c^{*}$.  Then by Lemma \ref{l1}, we have $\{u_k\}$ is bounded. Now  following the similar arguments as in Lemma \ref{p11},  we get
	\begin{align*}
		c^*> c &= \lim_{k\ra \infty} \left(\mc E_{\al,\la}(u_k) - \frac{1}{r}\left\langle \mc E^{\prime}_{\al,\la}(u_k), u_k\right\rangle\right)\\
        &= \lim_{k\ra \infty}\left\{a\left(\frac{1}{p}-\frac{1}{r}\right)||u_k||^p+b\left(\frac{1}{p\theta }-\frac{1}{r}\right)||u_k||^{p\theta }+\al\left(\frac{1}{r}-\frac{1}{2p^*_\mu}\right)||u_k||^{2p^*_\mu}_*\right\} \\
		& \geq \left(\frac{1}{p}-\frac{1}{r}\right) a \omega_{i_0}+\left(\frac{1}{r}-\frac{1}{2p^*_\mu}\right)\alpha\nu_{i_{0}}
		\geq\left(\frac{1}{p}-\frac{1}{2p^*_\mu}\right)\left(a  S_{H,L}\alpha^{-\frac{N-2p}{2N-\mu}}\right)^{\frac{2N-\mu}{N-\mu +2p}}:= c_1.
	\end{align*}
    \noi   Following the same  as in Lemma \ref{p11}, for mass at infinity and using \eqref{s1.1}, we obtain
    \begin{align*}
        c^{*}> c &= \lim_{k\ra \infty} \left(\mc E_{\al,\la}(u_k) - \frac{1}{r}\left\langle \mc E^{\prime}_{\al,\la}(u_k), u_k\right\rangle\right)\\
        & \geq \left(\frac{1}{p}-\frac{1}{r}\right) a \omega_\infty+\left(\frac{1}{r}-\frac{1}{2p^*_\mu}\right)\alpha\nu_\infty\nonumber\\
          &\geq \left(\frac{1}{p} -\frac{1}{2p_{\mu}^*}\right)\left( aS\right)^\frac{p^*_\mu}{p^*_\mu-p} (\al K)^{-\frac{p}{p^*_\mu-p}}:= c_2.
\end{align*}
	which is contradiction since $c<c^*:=min \{c_1, c_2\}$. The rest of the proof follows as in the proof of Lemma \ref{l2}.
\end{proof}
\subsection{ Case 2: \texorpdfstring{$p\theta  \geq 2 p_{\mu}^{*}$}{ }}
\noi \noi In this subsection, we prove  the strong convergence of the $(PS)_c$ sequence in both the cases non-degenerate and degenerate.\\ 
\noi To start with the energy functional $\mc E_{\la}$ corresponding to the problem $(\mc P_{ \la})$ when $\al=1$
\[\mc E_{\la}(u)= \frac{a}{p} \|u\|^{p}+ \frac{b}{p\theta} \|u\|^{p\theta} - \frac{1}{2 p_{\mu}^*}\int_{\mathbb R^N}\int_{\mathbb R^N}\frac{|u(x)|^{p_{\mu}^*}|u(y)|^{p_{\mu}^*}}{|x-y|^{\mu}}dxdy - \frac{\la}{r}\int_{\mathbb R^N}f(x)|u|^r dx.\]
\begin{lemma}\label{L6}
 Let  $p\geq 2$, $f$ satisfies $(f_1)-(f_2)$ and $p\theta \geq 2p_{\mu}^{*}$ . Then $\mc E_{\la}$ satisfies the $(PS)_c$ condition in $D^{2,p}(\mathbb R^N)$ for all $\la>0$, in the following cases:
 \begin{enumerate}
 \item  $p<r<p^*$. If either $p\theta = 2p_{\mu}^{*}$, $a>0$ and $b>  2^{p} S_{H,L}^{-\frac{2p_{\mu}^{*}}{p}}$ or $p\theta> 2 p_{\mu}^{*}$, $a>0$ and $b>b^{1}$, where $b^1$ is defined in \eqref{eb}. 
 \item  $1\leq r<p^*$, $a=0$, $b>2^{p} S_{H,L}^{-\frac{2p_{\mu}^{*}}{p}}$.
 \end{enumerate}
\end{lemma}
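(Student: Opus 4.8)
The plan is to exploit that, once $p\theta\geq 2p_{\mu}^{*}$, the Kirchhoff term $\frac{b}{p\theta}\|u\|^{p\theta}$ grows at least as fast as the negative critical Choquard term, so that $\mc E_{\la}$ becomes coercive for $b$ large; coercivity then gives boundedness of any $(PS)_c$ sequence at \emph{every} level $c$ (observe that Lemma \ref{l1} only treats $p\theta<2p_{\mu}^{*}$, so a fresh argument is needed here). Concretely, using $\int_{\mathbb R^N}\int_{\mathbb R^N}|x-y|^{-\mu}|u(x)|^{p_{\mu}^{*}}|u(y)|^{p_{\mu}^{*}}\leq S_{H,L}^{-2p_{\mu}^{*}/p}\|u\|^{2p_{\mu}^{*}}$ and $\int_{\mathbb R^N} f|u|^{r}\leq S^{-r/p}\|f\|_{p^{*}/(p^{*}-r)}\|u\|^{r}$, one bounds $\mc E_{\la}(u)$ below by $\frac{b}{p\theta}\|u\|^{p\theta}-\frac{1}{2p_{\mu}^{*}}S_{H,L}^{-2p_{\mu}^{*}/p}\|u\|^{2p_{\mu}^{*}}-\frac{\la}{r}S^{-r/p}\|f\|_{p^{*}/(p^{*}-r)}\|u\|^{r}$. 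Since $r<p^{*}<2p_{\mu}^{*}\leq p\theta$ (the middle inequality because $\mu<N$), the top-order term dominates: for $p\theta>2p_{\mu}^{*}$ this holds for every $b>0$, while for $p\theta=2p_{\mu}^{*}$ it needs $b>S_{H,L}^{-2p_{\mu}^{*}/p}$, which is implied by $b>2^{p}S_{H,L}^{-2p_{\mu}^{*}/p}$. This handles both the non-degenerate ($a>0$) and degenerate ($a=0$) cases.

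Given a bounded sequence, I would pass to a subsequence with $u_k\rightharpoonup u$ in $D^{2,p}(\mathbb R^N)$ and $u_k\to u$ a.e., and apply the concentration-compactness lemma proved above to obtain the measures $\omega,\zeta,\nu$ with atoms $\{\omega_i,\zeta_i,\nu_i\}$ at $\{z_i\}$ and masses $\omega_\infty,\zeta_\infty,\nu_\infty$ at infinity, satisfying \eqref{c1}--\eqref{l4} and the two at-infinity inequalities. The crux is that all Choquard masses vanish. Testing $\langle \mc E'_{\la}(u_k),\psi_{\e,i}u_k\rangle\to0$ with a cut-off $\psi_{\e,i}$ concentrated at $z_i$ and letting $k\to\infty$ then $\e\to0$ (the cross-terms in $\Delta(\psi_{\e,i}u_k)$ and the subcritical term disappear exactly as in Lemma \ref{p11}) yields $(a+b\,t^{\theta-1})\omega_i\leq\nu_i$, where $t:=\lim_k\|u_k\|^{p}$; the analogous test with $\psi_R$ gives $(a+b\,t^{\theta-1})\omega_\infty\leq\nu_\infty$. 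Combining the atom inequality with $S_{H,L}\nu_i^{p/2p_{\mu}^{*}}\leq\omega_i$ and with $t\geq\omega_i$ produces an inequality of the form $a+b\,\omega_i^{\theta-1}\leq 2^{p}S_{H,L}^{-2p_{\mu}^{*}/p}\,\omega_i^{(2p_{\mu}^{*}-p)/p}$, the factor $2^{p}$ arising from the estimate that controls the Choquard concentration by the norm concentration. When $p\theta=2p_{\mu}^{*}$ the two exponents $\theta-1$ and $(2p_{\mu}^{*}-p)/p$ coincide, so the inequality reads $a\leq\bigl(2^{p}S_{H,L}^{-2p_{\mu}^{*}/p}-b\bigr)\omega_i^{(2p_{\mu}^{*}-p)/p}$, impossible once $b>2^{p}S_{H,L}^{-2p_{\mu}^{*}/p}$; when $p\theta>2p_{\mu}^{*}$ the exponents differ, the right-hand side has a finite maximum over $\omega_i>0$, and requiring that maximum to be $<a$ is precisely $b>b^{1}$ from \eqref{eb}. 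Hence $\omega_i=\nu_i=0$ for all $i$, and the identical scheme fed into the at-infinity inequalities forces $\omega_\infty=\nu_\infty=0$, so that no Choquard energy leaks and $\int\int|x-y|^{-\mu}|u_k(x)|^{p_{\mu}^{*}}|u_k(y)|^{p_{\mu}^{*}}\to\int\int|x-y|^{-\mu}|u(x)|^{p_{\mu}^{*}}|u(y)|^{p_{\mu}^{*}}$.

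For the subcritical term I would use $r<p^{*}$ together with $(f_1)$--$(f_2)$: the a.e.\ convergence of $u_k$ and the fixed weight $f\in L^{p^{*}/(p^{*}-r)}$ give $\int f|u_k|^{r}\to\int f|u|^{r}$. With both nonlinear terms now continuous along the sequence, I would upgrade weak to strong convergence through the $(S_+)$-property of the $p$-biharmonic operator. Writing $\langle \mc E'_{\la}(u_k),u_k-u\rangle\to0$ and noting that the Choquard and subcritical contributions to this pairing tend to $0$ (their functionals converge), one is left with $(a+b\|u_k\|^{p(\theta-1)})\int_{\mathbb R^N}|\Delta u_k|^{p-2}\Delta u_k\,\Delta(u_k-u)\to0$. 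Since $\int_{\mathbb R^N}|\Delta u|^{p-2}\Delta u\,\Delta(u_k-u)\to0$ by weak convergence, subtracting and invoking the elementary inequality $(|\xi|^{p-2}\xi-|\eta|^{p-2}\eta)\cdot(\xi-\eta)\geq c_p|\xi-\eta|^{p}$ valid for $p\geq2$ yields $\|u_k-u\|\to0$, provided the Kirchhoff coefficient stays bounded away from $0$. For $a>0$ this is automatic; for $a=0$ the coefficient is $b\|u_k\|^{p(\theta-1)}\to b\,t^{\theta-1}$, which is positive when $t>0$, whereas $t=0$ forces $u_k\to0=u$ trivially.

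The main obstacle I anticipate is the vanishing of the Choquard mass at infinity: unlike the purely Sobolev setting, the concentration-compactness lemma supplies only the product-type at-infinity bounds relating $\nu_\infty$ to $\omega_\infty\bigl(\int d\omega+\omega_\infty\bigr)$ and $\zeta_\infty\bigl(\int d\zeta+\zeta_\infty\bigr)$, so the contradiction must be drawn by inserting $t\geq\omega_\infty$ and the sharp lower bound $\nu_\infty\geq(a+b\,t^{\theta-1})\omega_\infty$, and it is exactly here that the magnitude of $b$ (and the factor $2^{p}$) is decisive. A second delicate point is the degenerate case $a=0$, where the absence of the coercive $a\|u\|^{p}$ term means the final strong-convergence step rests entirely on the Kirchhoff coefficient being bounded below, hence on the dichotomy $t>0$ versus $t=0$.
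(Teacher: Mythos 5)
Your proposal takes a genuinely different route from the paper, and the difference matters. The paper's proof does not use concentration--compactness at all for this lemma: it writes $\langle \mc E_{\la}'(u_k)-\mc E_{\la}'(u),u_k-u\rangle=o(1)$, splits the critical term by a Br\'ezis--Lieb identity so that everything is expressed through $l:=\lim_k\|u_k-u\|$, applies the elementary monotonicity inequality $(|\xi|^{p-2}\xi-|\eta|^{p-2}\eta)(\xi-\eta)\geq 2^{-p}|\xi-\eta|^{p}$ (valid for $p\geq 2$) together with the definition of $S_{H,L}$, and arrives at the single scalar inequality $a\,l^{p}+b\,l^{p\theta}\leq 2^{p}S_{H,L}^{-2p_{\mu}^{*}/p}\,l^{2p_{\mu}^{*}}$, from which $l=0$ follows directly when $p\theta=2p_{\mu}^{*}$ and via Young's inequality when $p\theta>2p_{\mu}^{*}$ and $b>b^{1}$. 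In particular the factor $2^{p}$ in the thresholds comes from that monotonicity inequality, not, as you suggest, from an estimate relating the Choquard concentration to the norm concentration: in the concentration--compactness lemma the atom bound is $S_{H,L}\nu_i^{p/(2p_{\mu}^{*})}\leq\omega_i$ with no $2^{p}$, so your atom argument would in fact close under the \emph{weaker} conditions $b>S_{H,L}^{-2p_{\mu}^{*}/p}$, resp. the $2^{p}$-free analogue of $b^{1}$. (Your opening observation that boundedness needs the coercivity of Lemma \ref{41} rather than Lemma \ref{l1} is correct and is a fair criticism of the paper's own citation.)

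The genuine gap is exactly the point you flag as the "main obstacle" and then do not resolve: the loss of mass at infinity. The concentration--compactness lemma only supplies the product-type bound $S^{p}C(N,\mu)^{-p/p_{\mu}^{*}}\nu_{\infty}^{p/p_{\mu}^{*}}\leq \omega_{\infty}\bigl(\int d\omega+\omega_{\infty}\bigr)$. Inserting $\nu_{\infty}\geq (a+b\,t^{\theta-1})\omega_{\infty}$ and $\omega_{\infty}\leq \int d\omega+\omega_{\infty}=t$ and cancelling a power of $\omega_{\infty}$ leads to an inequality of the form $a+b\,t^{\theta-1}\leq C(N,\mu)S^{-p_{\mu}^{*}}t^{(2p_{\mu}^{*}-p)/p}$, so a contradiction would require $b>C(N,\mu)S^{-p_{\mu}^{*}}$ -- a constant that is not comparable to the hypothesis $b>2^{p}S_{H,L}^{-2p_{\mu}^{*}/p}$ (the known relation $C(N,\mu)^{p/(2p_{\mu}^{*})}S_{H,L}\geq S$ points the wrong way). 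Unlike Lemma \ref{p11}, you cannot fall back on smallness of $\la$ or negativity of $c$ here, since the claim is for all $\la>0$ and all levels. A secondary soft spot: convergence of the Choquard \emph{functional} along $u_k$ does not by itself give $\iint |x-y|^{-\mu}|u_k(y)|^{p_{\mu}^{*}}|u_k(x)|^{p_{\mu}^{*}-2}u_k(x)(u_k-u)(x)\,dx\,dy\to 0$ in your $(S_+)$ step; one needs strong $L^{p^{*}}$ convergence or a Br\'ezis--Lieb identity at the level of the derivative. Both difficulties evaporate in the paper's Br\'ezis--Lieb argument, which is the route you should take.
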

\begin{proof}
For all \(\la>0\), suppose \(\{u_{k}\}\) be a Palais-Smale sequence of \(\mc E_{\la}\) in \(D^{2,p}(\mathbb R^N)\) at any level \(c<0\). Then by Lemma \ref{l1}, $\{u_k\}$ is a bounded in $D^{2,p}(\mathbb R^N)$.  Therefore as $k\ra \infty$ we can assume up to a subsequence still denoted by $\{u_k\}$ such that \(u_k \rightharpoonup u\) weakly in $D^{2,p}(\mathbb R^N)$, $u_k \ra u$ a.e in $\mathbb R^N$, \(u_k \ra u\)  strongly in $L^{q}_{loc}(\mb R^N)$ for  $1\leq q< p^*$ and $|u_k|^{p^*-2} u_k \rightharpoonup |u|^{p^*-2} u$ weakly in $L^{\frac{p^*}{p^*-1}}(\mb R^N)$. 
\noi We claim that 
\begin{align}\label{11}
\lim_{k\ra\infty}\int_{\mb R^N} f(x) |u_k|^r dx= \int_{\mb R^N} f(x) |u|^r dx.
\end{align}
As $f\in L^{\frac{p^{*}}{p^*-r}}(\mb R^N)$, for any $\e>0$, there exist $R_{\e}>0$ such that
\[\left(\int_{\mb R^N\setminus B_{R_{\e}}(0)} |f(x)|^{\frac{p^{*}}{p^*-r}} dx \right)^{\frac{p^{*}}{p^*-r}} <\e.\]
\noi Then H\"{o}lder inequality and above inequality yield
\begin{align}
\left|\int_{\mb R^N\setminus B_{R_{\e}}(0)} f(x) (|u_k|^r -|u|^r) dx\right|& \leq \left(\int_{\mb R^N\setminus B_{R_{\e}}(0)} |f(x)|^{\frac{p^{*}}{p^*-r}} dx \right)^{\frac{p^{*}}{p^*-r}}\left(\|u_k\|_{L^{p^*}(\mathbb R^N)}+ \|u\|_{L^{p^*}(\mathbb R^N)}\right)\notag\\
&< C_\e.\label{33}
\end{align} 
Now, for any non-empty measurable subset $\Omega\subset B_{R_{\e}}$ and boundedness of $\{u_k\}$ give
\[\left|\int_{\mb R^N\setminus B_{R_{\e}}(0)} f(x) (|u_k|^r -|u|^r) dx\right| \leq C\left(\int_{\mb R^N\setminus B_{R_{\e}}(0)} |f(x)|^{\frac{p^{*}}{p^*-r}} dx \right)^{\frac{p^{*}}{p^*-r}}.\]
\noi This implies the sequence $\{f(x)(|u_k|^r-|u|^r)\}$ is equi-integrable  in $B_{R_{\e}}(0)$. Hence Vitali convergence Theorem implies 
\begin{align}\label{22}
\lim_{k\ra\infty}\int_{B_{R_{\e}}(0)} f(x) |u_k|^r dx= \int_{B_{R_{\e}}(0)} f(x) |u|^r dx.
\end{align}
Combining \eqref{33} and \eqref{22}, conclude the claim \eqref{11}.
Moreover, one can easily deduce that
\begin{align}\label{eqp1}
\lim_{k\ra \infty}\int_{\mb R^N} f(x)[|u_k|^{r-2} u_k - |u|^{r-2} u](u_k- u) dx =0.
\end{align}
Then, we may assume that $\displaystyle\lim_{k\ra \infty} \|u_k-u\|=l$
\[\|u_k- u\|_{*}^{2 p_{\mu}^{*}}=\|u_k\|_{*}^{2 p_{\mu}^{*}}- \|u\|_{*}^{2 p_{\mu}^{*}}.\]
Also, the definition of weak convergence in $D^{2,p}(\mathbb R^N)$,
\[\lim_{k\ra \infty}\int_{\mathbb R^N}|\Delta u|^{p-2} \Delta u \De(u_k-u) dx=0.\]
Since $\{u_k\}$ is a $(PS)_c$ sequence, by the boundedness of $\{u_k\}$, \eqref{eqp1}, we have
\begin{align*}
o(1)&= \langle \mc E_{\al,\la}^{\prime}(u_k)-  \mc E_{\al,\la}^{\prime}(u), u_k-u\rangle  \notag\\
&= (a+b\|u_k\|^{p(\theta-1)})\int_{\mathbb R^N}|\Delta u_k|^{p-2} \Delta u_k \De(u_k-u) dx - \la \int_{\mb R^N} f(x)[|u_k|^{r-2} u_k - |u|^{r-2} u]\notag\\ 
&\quad  \times(u_k- u) dx-(a+b\|u\|^{p(\theta-1)})\int_{\mathbb R^N}|\Delta u|^{p-2} \Delta u \De(u_k-u) dx\notag\\
&\quad  -  \int_{\mb R^N}\int_{\mb R^N} \left[\frac{|u_k(y)|^{p_{\mu}^*}|u_k(x)|^{p_{\mu}^*-2} u_k}{|x-y|^{\mu}}- \frac{|u(y)|^{p_{\mu}^*}|u(x)|^{p_{\mu}^*-2} u}{|x-y|^{\mu}}\right](u_k-u) dxdy\notag \\
&=(a+b\|u_k\|^{p(\theta-1)}) 
\int_{\mathbb R^N}|\Delta u_k|^{p-2} \Delta u_k \De(u_k-u) dx \notag\\
&\quad\quad-  {\int\int}_{\mb R^{2N}} \frac{|(u_k-u)(y)|^{p_{\mu}^*}|(u_k-u)(x)|^{p_{\mu}^*}}{|x-y|^{\mu}} dxdy + o(1),
\end{align*}

\noi Now using the following inequality, for any $p\geq 2$, 
\[(|a|^{p-2}a- |b|^{p-2}b)(a-b)\geq \frac{1}{2^p}|a-b|^p\; \mbox{for all}\; a, b\in \mathbb R,\]
together with the definition of $S_{H,L}$ yield 
\[\left(a+ b\|u_k-u\|^{(p-1)\theta}\right)\frac{1}{2^p}\|u_k-u\|^{p} \leq  S_{H,L}^{-\frac{2p_{\mu}^{*}}{p}} \|u_k-u\|^{2p_{\mu}^{*}}.\]
Taking limit $k\ra \infty$, we have
\[a l^p+ b(l^p+ \|u\|^p)^{\theta-1} l^p\leq  2^p S_{H,L}^{-\frac{2p_{\mu}^{*}}{p}} l^{2p_{\mu}^*},\]
which imply
\begin{align}\label{pl}
a l^p+ b l^{p\theta} \leq  2^p S_{H,L}^{-\frac{2p_{\mu}^{*}}{p}} l^{2p_{\mu}^*}.
\end{align}
Case 1 : When $a>0$, $p\theta= 2 p_{\mu}^{*}$, and $ 2^p S_{H,L}^{-\frac{2p_{\mu}^{*}}{p}}<b$, from \eqref{pl}, one can easily deduce that $l=0$. Thus $u_k\ra u$ in $D^{2,p}(\mathbb R^N)$. 

\noi Case 2: When $a=0$, $p\theta= 2 p_{\mu}^{*}$, and $ 2^p S_{H,L}^{-\frac{2p_{\mu}^{*}}{p}}<b$,
 equation \eqref{pl} yield that $l=0$. Thus $u_k\ra u$ in $D^{2,p}(\mathbb R^N)$. 

\noi Case 3:  When $a>0$, $p\theta> 2 p_{\mu}^{*}$ and $b>b^1$.

\noi Applying Young's inequality in the right hand side of \eqref{pl}, we obtain
\begin{align*}
al^p+ b l^{p\theta} &\leq  a l^p 
+\frac{2 p_{\mu}^{*}-p}{p(\theta-1)}\left[\left(\frac{ap(\theta-1)}{(p\theta- 2p_{\mu}^{*})}\right)^{\frac{2p_{\mu}^{*}-p\theta }{2p_{\mu}^{*}-p}} \left(2^p S_{H,L}^{-\frac{2p_{\mu}^{*}}{p}}\right)^{\frac{p(\theta-1)}{2p_{\mu}^{*}-p}} l^{p\theta}\right]\notag\\
&= a l^p + b^{1} l^{p\theta},
\end{align*}
where $b^{1}$ is given in \eqref{eb}. Thus $(b- b^{*})l^{p\theta}\leq 0$. In view of \eqref{eb}, we deduce $l=0$. 
Hence, we obtain that $u_k \ra u$  strongly in $D^{2,p}(\mathbb R^N)$, as required.
\end{proof}
\section{Case: \texorpdfstring{$p\theta \geq2p_{\mu}^*$}{ }}
\noi In this section, first we show that the functional is coercive and bounded below and then we prove the Theorems \ref{th3} and \ref{th4}.

\begin{lemma}\label{41}
Let $p\theta \geq 2p_{\mu}^*$,  $\al=1$ and  $1<r<p^*$. Then show that $\mc E_{\la}$ is coercive and bounded below for all $a\geq0$ and $ b > 
	\begin{cases}
         &S^{-\frac{2p^*_\mu}{p}}_{H,L}\mbox{\;\; if} \ p\theta =p^*_{\mu} \\
		&0 \quad \quad\ \mbox{\;\; if}\ p\theta >p^*_{\mu}
	\end{cases}$.
\end{lemma}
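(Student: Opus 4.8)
The plan is to reduce the whole question to a one-variable estimate in $t=\|u\|$ and then compare the growth rates of the competing powers.

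First I would invoke the two a priori bounds already available in the excerpt. By the definition of $S_{H,L}$ in \eqref{n3},
$$\int_{\mathbb R^N}\int_{\mathbb R^N}\frac{|u(x)|^{p_\mu^*}|u(y)|^{p_\mu^*}}{|x-y|^{\mu}}\,dx\,dy \leq S_{H,L}^{-\frac{2p_\mu^*}{p}}\|u\|^{2p_\mu^*},$$
and by the Hölder and Sobolev inequalities (estimate \eqref{e3}),
$$\int_{\mathbb R^N} f(x)|u|^r\,dx \leq S^{-\frac{r}{p}}\|f\|_{\frac{p^*}{p^*-r}}\|u\|^r.$$
Substituting these into the definition of $\mc E_{\la}$ gives the lower bound
$$\mc E_{\la}(u) \geq \frac{a}{p}\|u\|^p + \frac{b}{p\theta}\|u\|^{p\theta} - \frac{1}{2p_\mu^*}S_{H,L}^{-\frac{2p_\mu^*}{p}}\|u\|^{2p_\mu^*} - \frac{\la}{r}S^{-\frac{r}{p}}\|f\|_{\frac{p^*}{p^*-r}}\|u\|^r.$$
It then suffices to show that the real-valued function $g(t)$ obtained by replacing $\|u\|$ with $t\geq 0$ on the right-hand side satisfies $g(t)\to+\infty$ as $t\to\infty$ (coercivity) and is bounded below on $[0,\infty)$.

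Next I would record the exponent ordering that makes everything work, namely $r<p^*<2p_\mu^*$. The inequality $p^*<2p_\mu^*$ is immediate, since $2p_\mu^*-p^*=\frac{p(N-\mu)}{N-2p}>0$ by $0<\mu<N$ and $N>2p$; together with the hypothesis $r<p^*$ this yields $r<2p_\mu^*\leq p\theta$. Thus each negative term in $g$ carries a strictly smaller power of $t$ than the leading positive term, except possibly in the borderline case where two powers coincide.

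I would then split into the two cases. If $p\theta>2p_\mu^*$ (with $b>0$), the term $\frac{b}{p\theta}t^{p\theta}$ dominates all others as $t\to\infty$, because $p\theta>2p_\mu^*>r$, so $g(t)\to+\infty$. If $p\theta=2p_\mu^*$, I would combine the two terms of order $t^{2p_\mu^*}$:
$$\frac{b}{p\theta}t^{p\theta}-\frac{1}{2p_\mu^*}S_{H,L}^{-\frac{2p_\mu^*}{p}}t^{2p_\mu^*} = \frac{1}{2p_\mu^*}\left(b-S_{H,L}^{-\frac{2p_\mu^*}{p}}\right)t^{2p_\mu^*},$$
whose coefficient is positive precisely under the assumed threshold $b>S_{H,L}^{-\frac{2p_\mu^*}{p}}$; since $2p_\mu^*>r$ this leading term again dominates the remaining $-\,C\la\,t^r$, so $g(t)\to+\infty$. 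In both cases coercivity of $\mc E_{\la}$ follows, and since $g$ is continuous on $[0,\infty)$ with $g(0)=0$ and $g(t)\to+\infty$, it attains a finite minimum, which furnishes the uniform lower bound and shows $\mc E_{\la}$ is bounded below.

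The computations are elementary power comparisons; the only genuinely delicate point is the borderline case $p\theta=2p_\mu^*$, where the leading Kirchhoff term and the critical Choquard term carry the same power of $\|u\|$ and must be grouped. This is the sole place where the threshold $b>S_{H,L}^{-\frac{2p_\mu^*}{p}}$ is needed, and getting the sign of the combined coefficient correct is exactly what that hypothesis guarantees.
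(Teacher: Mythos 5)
Your proposal is correct and follows essentially the same route as the paper: bound the Choquard term via the definition of $S_{H,L}$ in \eqref{n3} and the weight term via \eqref{e3}, reduce to a one-variable power comparison in $t=\|u\|$, and in the borderline case $p\theta=2p_{\mu}^*$ absorb the critical term into the Kirchhoff term using $b>S_{H,L}^{-\frac{2p_{\mu}^*}{p}}$. Your explicit verification that $r<p^*<2p_{\mu}^*$ is a welcome detail the paper leaves implicit, but the argument is the same.
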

\begin{proof}
Let $u\in D^{2,p}(\mathbb R^N)$. Then
equations \eqref{n3} and \eqref{e3} yield
\begin{align*}
\mc E_{\la}&(u)=\frac{a}{p} \|u\|^{p}+ \frac{b}{p\theta} \|u\|^{p\theta} - \frac{1}{2 p_{\mu}^*}\int_{\mathbb R^N}\int_{\mathbb R^N}\frac{|u(x)|^{p_{\mu}^*}|u(y)|^{p_{\mu}^*}}{|x-y|^{\mu}}dxdy - \frac{\la}{r}\int_{\mathbb R^N}f(x)|u|^r dx 
\end{align*}
\noi Case 1: $a\geq0$, $p<r<p^*$ and $p\theta > 2p_{\mu}^*$,
\begin{align*}
    \mc E_{\la}&(u)\geq \frac{a}{p}\|u\|^p 
+ \frac{b}{p\theta}\|u\|^{p\theta}-\frac{1}{2 p_{\mu}^*} S_{H,L}^{-\frac{2p_{\mu}^*}{p}}\|u\|^{2 p_{\mu}^*}-\frac{\la}{r} S^{-\frac rp}\|f\|_{\frac{p^*}{p^*-r}}\|u\|^r.
\end{align*}
\noi Case 2: $a=0$, $1<r<p^*$ and $p\theta = 2p_{\mu}^*$,
\[\mc E_{\la}(u)\geq \frac{1}{2p_{\mu}^*}\left(b- S_{H,L}^{-\frac{2p_{\mu}^*}{p}}\right)\|u\|^{2 p_{\mu}^*}-\frac{\la}{r} S^{-\frac rp}\|f\|_{\frac{p^*}{p^*-r}}\|u\|^r. \]
\noi Case 2:  $a>0$, $p<r<p^*$ and $p\theta = 2p_{\mu}^*$, 
\[\mc E_{\la}(u)\geq  \frac{a}{p}\|u\|^p  +\frac{1}{2p_{\mu}^*}\left(b- S_{H,L}^{-\frac{2p_{\mu}^*}{p}}\right)\|u\|^{2 p_{\mu}^*}-\frac{\la}{r} S^{-\frac rp}\|f\|_{\frac{p^*}{p^*-r}}\|u\|^r.\]

\noi  In all the cases, we  conclude that $\mc E_{\la}$ is bounded below and coercive.
\end{proof}
\noi So, we define $m := \displaystyle\inf_{u\in D^{2,p}(\mathbb R^N)}\mc E_{\la}(u)$,
which is well defined by Lemma \ref{41}.\\

\noi {\bf {Proof of Theorem \ref{th3}:}} We first show that the problem $(\mc P_{\la})$ has a non-trivial least energy solution. 

\noi We claim that there exists $\la^1>0$ such that $m<0$ for all $\la>\la^1$.

\noi Choose a function $\Phi \in D^{2,p}(\mathbb R^N)$ with $\|\Phi\|=1$ and $\int_{\mathbb R^N} f(x) |\Phi|^r dx>0$, which is possible due to $f\geq 0$ and $f\not\equiv 0$ in $\mathbb R^N$. Then
\begin{align*}
\mc E_{\la}(\Phi)&= \frac{a}{p} + \frac{b}{p\theta} -\frac{1}{2 p_{\mu}^{*}}\int_{\mb R^N}\int_{\mb R^N} \frac{|\Phi(x)|^{p_{\mu}^{*}}|\Phi(y)|^{p_{\mu}^{*}}}{|x-y|^{\mu}} dx dy-\frac{\la}{r} \int_{\mb R^N} f(x) |\Phi|^r dx\\
&\leq \frac{a}{p} + \frac{b}{p\theta}-\frac{\la}{r} \int_{\mb R^N} f(x) |\Phi|^r dx<0,
\end{align*}
for all $\la>\la^1$ with $\la^1= \frac{r\left(\frac{a}{p}+ \frac{b}{p\theta}\right)}{\int_{\mb R^N} f(x) |\Phi|^r dx}$.

\noi Hence, by Lemma \ref{L6} and [\cite{31}, Theorem 4.4], there exists $u_1\in D^{2,p}(\mathbb R^N)$ such that $\mc E_{\la}(u_1)=m$ and $\mc E^{\prime}_{\la}(u_1) =0$. Thus $u_1$ is a non-trivial solution of $(\mc P_{\la})$ with $\mc E_{\la}(u_1)<0$.

\noi Next, we show that $(\mc P_{\la})$ has a mountain pass solution.  For all $u\in D^{2,p}(\mb R^N)$, using \eqref{e3} and \eqref{n3}, we obtain
\[\mc E_{\la}(u)\geq \left[\frac{a}{p} + \frac{b}{p\theta}\|u\|^{p\theta-p}- \la \|f\|_{\frac{p^*}{p^*-r}}S^{-\frac{r}{p}}\|u\|^{r-p}- \frac{ S_{H,L}^{-\frac{2p_{\mu}^{*}}{p}}}{2 p_{\mu}^*}\|u\|^{2p_{\mu}^{*}-p}\right]\|u\|^p.\]
\noi Since ${ p<r<p^{*}}$, there exists $\rho>0$ small enough and $\eta>0$ such that $\mc E_{\la}(u)> \eta$ with $\|u\|=\rho$ for all $u\in D^{2,p}(\mb R^N)$.
\noi Define \[c= \displaystyle\inf_{\gamma\in \Gamma} \max_{t\in [0,1]} \mc E_{\la}(\ga t),\] where $\Gamma(t)=\{\gamma: \gamma\in(C[0,1], D^{2,p}(\mb R^N)): \ga(0)=0, \ga(1)= u_1\}.$
\noi Then $c>0$. Lemma \ref{L6} yields that $\mc E_{\la}$ satisfies the assumption of the mountain pass Lemma, see \cite[Theorem 2.1]{b1}. Thus there exists $u_2\in D^{2,p}(\mb R^N)$ such that $\mc E_{\la}(u_2)=c>0$ and $\mc E^{\prime}_{\la}(u_2)=0$. Hence, $u_2$ is a non-trivial solution of $(\mc P_{\la})$ different from $u_1$.\qed\\

\noi {\bf {To prove Theorem \ref{th4}:}} We will use Kranoselskii's genus theory \cite{59Krasnoselskii1968TopologicalMI}.
\noi Let $X$ be a real Banach space and $\sum$ the family of the set $E \subset X\setminus \{0\}$ such that $E$ is closed in $X$ and symmetric with respect to $0$, i.e.
	\[\sum = \{E\subset X\setminus \{0\}:  E\;\mbox{is closed in}\; X\; \mbox{and} \;  E=-E\}.\]
	
\noi For each $E\in \sum$, we say genus of $E$ is a number $k$ denoted by $\gamma(E) =k$ if there is an odd map $h\in C(E,\mathbb R^N\setminus\{0\})$ and $k$ is the smallest integer with this property.

\begin{lemma}\label{3.2}(\cite{8})
Let $X=\mathbb R^N$ and $\partial \Omega$ be the boundary of an open, symmetric, and bounded subset $\Omega \subset \mathbb R^N$ with $0\in \Omega$. Then $\gamma(\partial \Omega)= N$.
\end{lemma}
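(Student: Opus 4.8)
The plan is to prove the equality by establishing the two inequalities $\gamma(\partial\Omega)\le N$ and $\gamma(\partial\Omega)\ge N$ separately. The first is elementary, while the second carries all the topological content and rests on the Borsuk--Ulam theorem (in its degree-theoretic form, Borsuk's antipodal theorem); this is where I expect the real difficulty to lie. Throughout I would use the standard genus properties recorded just before the statement, together with the observation that since $\Omega$ is open and $0\in\Omega$, the origin is an interior point and hence $0\notin\partial\Omega$.

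For the upper bound I would simply take the inclusion map. Because $0\notin\partial\Omega$, the map $h(x)=x$ is an odd continuous map $h\colon\partial\Omega\to\mathbb{R}^N\setminus\{0\}$, so by the very definition of genus $\gamma(\partial\Omega)\le N$.

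For the lower bound I would argue by contradiction, assuming $\gamma(\partial\Omega)=k\le N-1$. Then there exists an odd continuous map $\varphi\colon\partial\Omega\to\mathbb{R}^{k}\setminus\{0\}\subset\mathbb{R}^{N-1}\setminus\{0\}$. First I would extend $\varphi$ to an odd continuous map $\Phi\colon\overline{\Omega}\to\mathbb{R}^{N-1}$: apply Tietze's extension theorem and then antisymmetrize via $x\mapsto\tfrac12(\Phi(x)-\Phi(-x))$, which leaves the values on $\partial\Omega$ unchanged since $\varphi$ is already odd there. Setting $f(x)=(\Phi(x),0)\in\mathbb{R}^N$ gives an odd continuous map on $\overline{\Omega}$ that is nonzero on $\partial\Omega$ (because $\varphi\neq0$ there), so Borsuk's antipodal theorem applies and forces the Brouwer degree $\deg(f,\Omega,0)$ to be odd, in particular nonzero.

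The contradiction then comes from exploiting that $f(\overline{\Omega})$ lies in the hyperplane $V=\mathbb{R}^{N-1}\times\{0\}$. Choosing $y=(0,\dots,0,\varepsilon)\notin V$, the segment joining $0$ to $y$ meets $V$ only at the origin and therefore avoids $f(\partial\Omega)\subset V\setminus\{0\}$, so $0$ and $y$ lie in the same connected component of $\mathbb{R}^N\setminus f(\partial\Omega)$; by homotopy invariance of the degree $\deg(f,\Omega,0)=\deg(f,\Omega,y)=0$, since $y\notin f(\overline{\Omega})$. This contradicts the oddness just obtained, yielding $\gamma(\partial\Omega)\ge N$ and hence $\gamma(\partial\Omega)=N$. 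The main obstacle is precisely this last step: one must reconcile Borsuk's theorem (degree odd) with the degeneracy of a map whose image sits in a proper hyperplane (degree zero at the origin). As a backup route I would note that one can instead try to reduce to the normalization $\gamma(S^{N-1})=N$ using the odd radial projection $x\mapsto x/|x|$ from $\partial\Omega$ onto $S^{N-1}$; this reproves the upper bound cleanly, but constructing an odd map in the reverse direction would require $\Omega$ to be star-shaped, which is not assumed here, so the direct degree argument above is the more robust choice.
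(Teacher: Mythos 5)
Your proof is correct. The paper itself gives no argument for this lemma --- it is quoted verbatim from the cited reference as a standard fact of Krasnoselskii's genus theory --- and what you supply is precisely the classical proof: the upper bound via the inclusion $\partial\Omega\hookrightarrow\mathbb{R}^N\setminus\{0\}$ (legitimate because $0\in\Omega$ open forces $0\notin\partial\Omega$), and the lower bound by Tietze extension plus antisymmetrization, Borsuk's antipodal theorem giving odd degree, against the vanishing of the degree at $0$ for a map whose image lies in a proper hyperplane (using that the segment from $0$ to a point off the hyperplane avoids $f(\partial\Omega)$ and that the degree is constant on components of $\mathbb{R}^N\setminus f(\partial\Omega)$). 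All hypotheses are used where needed (symmetry of $\overline{\Omega}$ for the antisymmetrization, nonemptiness of $\partial\Omega$ so that $\gamma(\partial\Omega)\geq 1$), and the argument is complete.
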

\noi It follows from Lemma \ref{3.2}, $\gamma(\mathbb S^{N-1})=N$, where $\mb S^{N-1}$ the surface of the unit sphere in $\mathbb R^N$.

\noi Now, we will use the following Theorem to obtain the existence of infinitely many solutions of $(\mc P_{\la})$.
 \begin{theorem} \label{3.1}(\cite{8}, \cite{10})
Let $I\in C^{1}(X,\mathbb R)$ be an even functional satisfying $(PS)$ condition. Furthermore
\begin{enumerate}
\item $I$ is bounded from below and even.
\item There is a compact set $E\in \sum $ such that $\gamma(E)=n$ and $\displaystyle\sup_{u\in \sum} I(u)< I(0)$, 
\end{enumerate}
then $I$ has at least $n$ pairs of distinct critical points and their corresponding critical values are less than $I(0)$.
 \end{theorem}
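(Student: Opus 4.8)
The plan is to run the standard Krasnoselskii genus minimax scheme. For each $k\in\{1,\dots,n\}$ I would introduce the family
\[
\Sigma_k := \{A\in\Sigma : A \text{ compact},\ \gamma(A)\geq k\}
\]
(here $\Sigma$ is the family denoted $\sum$ above) and define the candidate critical values
\[
c_k := \inf_{A\in\Sigma_k}\ \sup_{u\in A} I(u).
\]
Since $I$ is bounded below by hypothesis (1), each $c_k>-\infty$, and because the genus is monotone ($\Sigma_{k+1}\subset\Sigma_k$) we get $c_1\leq c_2\leq\cdots\leq c_n$. The compact set $E$ of hypothesis (2) lies in $\Sigma_n$ and satisfies $\sup_E I< I(0)$, hence $c_n\leq \sup_E I< I(0)$; by the ordering, $c_k\leq c_n< I(0)$ for every $k$. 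In particular each $c_k$ is a finite real number strictly below $I(0)$, which will also force the associated critical points to be nontrivial.

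Next I would show that every $c_k$ is a critical value of $I$; this is where the evenness of $I$ and the $(PS)$ condition enter. Suppose $c:=c_k$ is not critical, i.e.\ the critical set $K_c=\{u : I'(u)=0,\ I(u)=c\}$ is empty. The $(PS)$ condition lets me invoke the equivariant (odd) deformation lemma: there exist $\epsilon>0$ and an odd homeomorphism $\eta:X\to X$ with $\eta(I^{c+\epsilon})\subset I^{c-\epsilon}$, where $I^{d}:=\{u : I(u)\leq d\}$. Choosing $A\in\Sigma_k$ with $\sup_A I< c+\epsilon$ and setting $A':=\eta(A)$, the fact that the genus does not decrease under odd continuous maps gives $\gamma(A')\geq\gamma(A)\geq k$, so $A'\in\Sigma_k$, yet $\sup_{A'} I\leq c-\epsilon< c_k$, contradicting the definition of $c_k$. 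Hence $K_{c_k}\neq\emptyset$ and $c_k$ is a critical value.

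Finally I would treat coincidences among the $c_k$ and count critical points. If the values are pairwise distinct, the levels $c_1<\cdots<c_n$ produce $n$ distinct critical values, and since $I$ is even each gives a pair $\pm u_k$ of critical points (nontrivial because $c_k< I(0)$), yielding at least $n$ pairs. If $c_k=c_{k+1}=\cdots=c_{k+j}=:c$ for some $j\geq 1$, I would prove $\gamma(K_c)\geq j+1$: assuming $\gamma(K_c)\leq j$, the continuity and excision property of the genus furnishes a closed symmetric neighborhood $U\supset K_c$ with $\gamma(\overline U)\leq j$, and the deformation lemma ``with obstacle'' pushes a near-optimal set for $c_{k+j}$ below level $c$ after excising $U$; the subadditivity estimate $\gamma(A\setminus U)\geq\gamma(A)-\gamma(\overline U)\geq (k+j)-j=k$ then contradicts $c_k=c$. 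Since $\gamma(K_c)\geq 2$ forces $K_c$ to be an infinite set, in every scenario one collects at least $n$ pairs of distinct critical points with critical values below $I(0)$. The main obstacle is precisely this degenerate multiplicity step: setting up the odd deformation lemma compatibly with the obstacle $U$ and applying the subadditivity and excision properties of the genus correctly, while ensuring throughout that $0$ never enters the sets in play, which is secured by the strict inequality $c_k< I(0)$.
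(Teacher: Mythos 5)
The paper does not prove this statement at all: it is quoted as a classical result of Krasnoselskii genus theory (Clark's theorem) with citations to \cite{8} and \cite{10}, so there is no internal proof to compare against. Your argument is the standard proof of that cited theorem and is correct in all essentials: the minimax values $c_k=\inf_{A\in\Sigma_k}\sup_A I$ are finite and monotone, the compact set $E$ forces $c_k\le c_n\le\sup_E I<I(0)$ (note the theorem statement's ``$\sup_{u\in\sum}$'' is a typo for ``$\sup_{u\in E}$'', which you have read correctly), the odd deformation lemma under $(PS)$ shows each $c_k$ is critical, and the estimate $\gamma(K_c)\ge j+1$ at a repeated level, obtained from excision and subadditivity of the genus together with the deformation avoiding a neighborhood of $K_c$, yields infinitely many critical points there; the strict inequality $c_k<I(0)$ keeps the origin out of every critical set in play. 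This matches the proof one finds in the references the paper points to, so nothing further is needed.
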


\noi {\bf {Proof of Theorem \ref{th4}:}} Let $\{e_1, e_2, \cdots\}$ be a Schauder basis of $D^{2,p}(\mathbb R^N)$. Foe each $n\in \mathbb N$, define $E_n= span\{e_1, e_2,\cdots, e_n\}$, the subspace of $D^{2,p}(\mathbb R^N)$ generated by $e_1, e_2,\cdot\cdot\cdot, e_n$. 
Define $L^{r}(\mb R^N, f)= \{u: \mb R^N\ra \mb R| \int_{\mb R^N} f(x)|u(x)|^r dx <\infty\}$, endowed with the norm
\[\|u\|_{r,f}= \left( \int_{\mb R^N} f(x)|u|^r dx\right)^{\frac {1}{r}}.\]

\noi By assumption $(f_1)$, one can easily see that $E_n$ can be continuously embedded  into $L^{r}(\mb R^N, f)$. As we know that all the norms are equivalent on a finite dimensional Banach space. Thus there exist a constant $C(n)$ depending on $n$ such that for all $u\in E_n,$
\[\left(\int_{\mathbb R^N} |\Delta u|^p dx\right)^{\frac 1p} \leq C(n) \left(\int_{\mb R^N} f(x)|u|^r dx\right)^{\frac 1r}. \]

\noi Then
\begin{align*}
\mc E_{\la}(u) &= \frac{b}{p\theta }\|u\|^{p\theta} - \frac{1}{2p_{\mu}^*}\int_{\mb R^N}\int_{\mb R^N} \frac{|u(y)|^{p_{\mu}^*}|u(x)|^{p_{\mu}^*} }{|x-y|^{\mu}} dxdy- \la \int_{\mb R^N} f(x)|u|^r dx\\
&\leq  \frac{b}{p\theta }\|u\|^{p\theta} -  \la C(n)\|u\|^{r}\\
&= \left( \frac{b}{p\theta }\|u\|^{p\theta-r} -  \la C(n)\right)\|u\|^r
\end{align*}

\noi Choose $R>0$ be a constant such that $\frac{b}{p\theta} R^{p\theta-r}<\la C(n).$
Hence for all $0<t<R$,
\[\mc E_{
\la}(u)\leq\left( \frac{b}{p\theta }t^{p\theta-r} -  \la C(n)\right) t^r\leq  \left( \frac{b}{p\theta }R^{p\theta-r} -  \la C(n)\right) R^r<0 = \mc E_{\la}(0),\]
for all $u\in K:= \{u\in E_n : \|u\|= t\}$. Then it follows that
\[\sup_{u\in K} \mc E_{\la}(u)<0= \mc E_{\la}(0).\]

\noi Clearly, $E_n$ and $\mb R^N$ are isomorphic and $K$ and $\mathbb S^{N-1}$ are homeomorphic. Therefore, we conclude that $\gamma(K)=n$ by Lemma \ref{3.2}. Moreover, $\mc E_{\la}$ is bounded below and satisfies $(PS)_c$ condition by Lemmas \ref{41} and \ref{L6}. Thus, Theorem \ref{3.1} give $\mc E_{\la}$ has at least $n$ pair of distinct critical points. The arbitrariness of $n$ yields that $\mc E_{\la}$ has infinitely many pairs of distinct solutions. 

\noi Let $u\in D^{2,p}(\mathbb R^N)\setminus \{0\}$ be a solution of $(\mc P_{\la})$. Then
\[b \|u\|^{p\theta}= \int_{\mb R^N}\int_{\mb R^N} \frac{|u(y)|^{p_{\mu}^*}|u(x)|^{p_{\mu}^*} }{|x-y|^{\mu}} dxdy+  \la \int_{\mb R^N} f(x)|u|^r dx. \]
\noi Using this together with \eqref{e3} and \eqref{n3} yield,
\[b \|u\|^{p\theta} \leq  S_{H,L}^{-\frac{2p_{\mu}^{*}}{p}} \|u\|^{2 p_{\mu}^{*}} + \la \|f\|_{\frac{p^*}{p^*-r}} S^{-\frac{r}{p}}\|u\|^r.\]
 
\noi Since $b> 2^p S_{H,L}^{-\frac{2p_{\mu}^{*}}{p}}$, $p\theta = 2 p_{\mu}^{*}$,  we obtain
\[(b- S_{H,L}^{-\frac{2p_{\mu}^{*}}{p}}) \|u\|^{p\theta} \leq \la \|f\|_{\frac{p^*}{p^*-r}} S^{-\frac{r}{p}}\|u\|^r.\]
\noi implies
\[\|u\|\leq \left[\frac{\la \|f\|_{\frac{p^*}{p^*-r}}}{S^{\frac{r}{p}}\left(b- S_{H,L}^{-\frac{2p_{\mu}^{*}}{p}}\right)}\right]^{\frac{1}{p\theta -r}}.\]
\noi Hence the proof is complete.\qed
\section{Case: \texorpdfstring{ $p\theta <2p_{\mu}^*$}{ }}
\noi This section is  divided into three subsections $5.1$, $5.2$ and $5.3$. In subsections $5.1$, $5.2$ and $5.3$, we give the proofs of our main Theorems for $1<r<p$, $r=p$ and $p\theta\leq r<p^*$ respectively.
\subsection{ \texorpdfstring{$1<r<p$}{ }}
In this subsection,  we first recall the definition of genus and then to prove Theorem 1.1, we use a result by Kajikiya see [\cite{kajikiya}, Theorem 1], which is an extension of the symmetric mountain pass theorem.
\begin{definition} 
Let $X$ be a Banach space, and $A$ be a subset of $X$. The set $A$ is said to be symmetric if $u \in A$ implies $-u \in A.$ For a closed symmetric set $A$ which does not contain the
origin, we define a genus $\gamma (A)$ of $A$ by the smallest integer $k$ such that there exists an
odd continuous mapping from $A$ to $\mathbb R^k \setminus \{0\}$. If there does not exist such  $k$, we define
	$\gamma (A)=\infty$ . Moreover, we set $\gamma (\emptyset)=0$ .
	\end{definition}
For any $n\in\mathbb N$, let us define the set $\Sigma_n$ as 
\[\Sigma_n:=\{A\;:\; A\subset X \text{ is closed symmetric }, 0\not\in A,\; \gamma (A)\geq n \}.\] 	
\begin{theorem}\label{sym mt}
Let $X$ be an infinite dimensional Banach space and $I\in C^1(X,\mb R)$. Suppose that the following hypotheses hold.
\begin{itemize}
\item[$(\mathcal A_1)$] The functional $I$ is even and bounded from below in $X$, $I(0)=0$ and $I$ satisfies the local Palais-Smale condition.
\item [$(\mc A_2)$] For each $n\in \mathbb N,$ there exists $A_n\in \Sigma_n$ such that $$\sup_{u\in A_n} I(u)<0.$$
\end{itemize} Then $I$ admits a sequence of critical points $\{u_n\}$ in $X$ such that $u_n\not =0$, $I(u_n)\leq 0$ for each $n$  and $u_n \to 0$ in $X$ as $n\to\infty$.
\end{theorem}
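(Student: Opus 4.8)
The plan is to construct a sequence of negative critical values by a genus-based minimax scheme and to show that these values increase to $0$, which together with the local Palais--Smale condition produces critical points accumulating at the origin. Recall the families $\Sigma_k$ introduced above, and for $a\in\mathbb R$ write $I^a:=\{u\in X:\ I(u)\le a\}$ and $K_c:=\{u\in X:\ I(u)=c,\ I'(u)=0\}$. For each $k\in\mathbb N$ define the minimax level
\[
c_k:=\inf_{A\in\Sigma_k}\ \sup_{u\in A} I(u).
\]
Since $I$ is bounded from below by $(\mathcal A_1)$ we have $c_k>-\infty$, while $(\mathcal A_2)$ gives a competitor $A_k\in\Sigma_k$, so $c_k\le \sup_{u\in A_k}I(u)<0$. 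Because $\Sigma_{k+1}\subset\Sigma_k$, the map $k\mapsto c_k$ is non-decreasing, hence $c_k\nearrow c$ for some $c\le 0$.

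The engine of the proof is an equivariant deformation lemma. Since $I$ is even and $I\in C^1(X,\mathbb R)$, one builds an odd pseudo-gradient vector field and integrates it to obtain, away from the critical set at a given level, an odd homeomorphism $\eta$ that pushes sublevel sets down; the local Palais--Smale condition supplies the compactness needed for this flow at negative levels. The standard genus properties I will use are monotonicity and subadditivity of $\gamma$, the invariance $\gamma(\eta(A))=\gamma(A)$ under odd homeomorphisms, the punch-out estimate $\gamma(\overline{A\setminus U})\ge\gamma(A)-\gamma(\overline U)$, and the fact that a compact symmetric set not containing $0$ has finite genus equal to that of a small symmetric neighbourhood. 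Running the deformation with empty exceptional set shows each $c_k<0$ is a critical value, and the same argument with $U$ a neighbourhood of $K_c$ shows that if $c_k=\cdots=c_{k+p}=c<0$ then $\gamma(K_c)\ge p+1$.

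The crucial step is to prove $c=0$. Suppose instead $c<0$. Then $K_c$ is compact by the local Palais--Smale condition, and $0\notin K_c$ since $I(0)=0\ne c$, so $m:=\gamma(K_c)<\infty$; pick an open symmetric neighbourhood $U\supset K_c$ with $\gamma(\overline U)=m$. The deformation lemma yields $\varepsilon>0$ and an odd homeomorphism $\eta$ with $\eta\big(I^{c+\varepsilon}\setminus U\big)\subset I^{c-\varepsilon}$. Choose $k$ with $c_k>c-\varepsilon$, and, using $c_{k+m}\le c<c+\varepsilon$, a set $A\in\Sigma_{k+m}$ with $A\subset I^{c+\varepsilon}$. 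Then $B:=A\setminus U$ is closed and symmetric with $0\notin B$ and $\gamma(B)\ge (k+m)-m=k$, so $\eta(B)\in\Sigma_k$ and $\sup_{\eta(B)}I\le c-\varepsilon$, giving $c_k\le c-\varepsilon$, a contradiction. Therefore $c_k\nearrow 0^-$.

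Finally I would assemble the conclusion. The values $c_k<0$ increase to $0$; if they were eventually constant equal to some $c<0$ the multiplicity estimate would force $\gamma(K_c)=\infty$, which is impossible, so infinitely many distinct negative critical values occur, each carrying a nonzero critical point $u_k$ with $I(u_k)=c_k\to0^-$. To upgrade $I(u_k)\to0$ to $u_k\to0$ in $X$, I would note that $\{u_k\}$ is a Palais--Smale sequence at level $0$ and invoke the local Palais--Smale condition, combined with a localization of the minimax scheme to arbitrarily small symmetric punctured balls about the origin, which guarantees critical points in every neighbourhood of $0$. I expect this last localization and norm-convergence step to be the main obstacle: the genus machinery readily delivers the critical values and their multiplicity, but controlling the \emph{location} of the critical points rather than merely their energies is precisely the refinement due to Kajikiya and requires the finite-genus compactness of $K_c$ at each negative level to be exploited near the origin.
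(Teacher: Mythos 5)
The paper does not prove this statement at all: it is quoted verbatim as Theorem 1 of Kajikiya's paper \cite{kajikiya} and used as a black box, so there is no internal proof to compare against. Your attempt is therefore a reconstruction of Kajikiya's argument, and its first three stages are sound: the minimax values $c_k=\inf_{A\in\Sigma_k}\sup_A I$ are finite and negative by $(\mathcal A_1)$--$(\mathcal A_2)$, non-decreasing since $\Sigma_{k+1}\subset\Sigma_k$, each is a critical value via the odd deformation lemma, and the contradiction argument with $U\supset K_c$, $\gamma(\overline U)=\gamma(K_c)=m<\infty$ and $\gamma(\overline{A\setminus U})\ge \gamma(A)-m$ correctly forces $c_k\nearrow 0^-$ (modulo the small point that you must take $\varepsilon<|c|$ so that $0\notin A$ for $A\subset I^{c+\varepsilon}$, and work with $\overline{A\setminus U}$ rather than $A\setminus U$).

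The genuine gap is exactly where you flag it, and your proposed repair does not close it. Knowing that $I'(u_k)=0$ and $I(u_k)=c_k\to 0^-$ makes $\{u_k\}$ a Palais--Smale sequence at level $0$, but the (local) PS condition only yields a subsequence converging to \emph{some} critical point $u^*$ with $I(u^*)=0$; nothing forces $u^*=0$, and if $u^*\neq 0$ the theorem's conclusion is not yet obtained. Your fallback, ``localization of the minimax scheme to arbitrarily small symmetric punctured balls about the origin,'' cannot be carried out as stated: the competitor sets $A_n\in\Sigma_n$ furnished by $(\mathcal A_2)$ have genus $\ge n$ and need not lie in any prescribed ball, and intersecting or truncating them destroys the genus lower bound, so the infimum over $\Sigma_k$ cannot simply be restricted to a neighbourhood of $0$. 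Kajikiya's actual resolution is a separate dichotomy argument: assuming no sequence of nonzero critical points with $I\le 0$ accumulates at $0$, one finds $\delta>0$ with no such critical points in $B_{2\delta}(0)\setminus\{0\}$, and then a quantitative odd deformation confined to that annular region pushes the sets realizing $c_k$ strictly below a fixed negative level, contradicting $c_k\to 0$. Without this (or an equivalent) argument, your proof establishes the classical Clark-type conclusion (a sequence of negative critical values tending to $0$) but not the convergence $u_n\to 0$ asserted in the statement, which is precisely the refinement the paper needs in the proof of Theorem \ref{th1} to conclude $\hat{\mc E}_{\al,\la}(u_n)=\mc E_{\al,\la}(u_n)$ for large $n$.
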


	\noi {\bf Proof of Theorem \ref{th1} :}
From the hypotheses, it follows that $ \mc E_{\al,\la}$ is even and $\mc {\mc E}_{\al,\la}(0)=0.$ Also Lemma \ref{cc} ensures that $ \mc E_{\al,\la}$ satisfies the  $(PS)_c$ condition for all $c<0$. But observe that, $\mc E_{\al,\la}$ is not bounded from below in $D^{2,p}(\mb R^N)$. So, for applying Theorem \ref{sym mt}, we use a truncation technique.\\
 Let $w\in D^{2,p}(\mb R^N)$. Then equations \eqref{e3} and \eqref{n3} yield
 \begin{align}\label{mpp}
\mc E_{\al,\la} (u)&=\frac{a}{p}\displaystyle\int_{{\mathbb R^N}} |\Delta u|^{p}dx+\frac{ b}{p\theta}\left( \int_{\mathbb R^N}|\Delta u|^pdx\right)^{p\theta } -\frac{\la}{r}\int_{\mathbb R^N}f(x)|u(x)|^r dx \notag\\
	&\quad\quad-\frac {\al}{2p_{\mu}^*}\int_{{\mathbb R^N}}\left(\int_{{\mathbb R^N}} \frac{|u(y)|^{p_{\mu}^*}}{|x-y|^\mu}dy\right)|u(x)|^{p_{\mu}^*}dx\notag\\
	&\geq \frac ap \|u\|^p-\frac \la r S^{-\frac{r}{p}} \|f\|_{\frac{p^*}{p^*-p}}\|u\|^{r}-\frac{\al}{2p_{\mu}^*} S_{H,L}^{-\frac{2p_{\mu}^*}{p}}\|u\|^{2p_{\mu}^*}\notag\\
	&:= C_1 \|u\|^p-\la C_2\|u\|^{r}-C_3\al \|u\|^{{2p_{\mu}^*}}.
\end{align}  Define the function $\mc K: \mb [0,\infty)\to\mb R$ as 
\begin{align*}
\mc K(t)=C_1 t^p-\la C_2t^{r}-C_3\al t^{{2p_{\mu}^*}}.
\end{align*} 
Since $1<r<p$, for any $\al>0$ we can choose $\la_0$  sufficiently small such that for all $\la\in (0,\la_0)$ there exist $0<t_1<t_2$ so that $\mc K<0$ in $(0,t_1)$, $\mc K>0$ in $(t_1,t_2)$ and $\mc K<0$ in $(t_2,\infty)$.

\noi Similarly, for any $\la>0$ we can choose $\al_0$  sufficiently small such that for all $\al\in (0,\al_0)$ there exist $0<t_{1}^{\prime}<t_{2}^{\prime}$ so that $\mc K<0$ in $(0,t_{1}^{\prime})$, $\mc K>0$ in $(t_{1}^{\prime},t_{2}^{\prime})$ and $\mc K<0$ in $(t_{2}^{\prime},\infty)$.
\noi Therefore $\mc K(t_1)=0=\mc K(t_2)$ and $\mc K(t_{1}^{\prime})=0=\mc K(t_{2}^{\prime})$. 
Next, we choose a non-increasing function $\mc I\in C^\infty([0,\infty),[0,1])$ such that
\begin{equation*}
	\mc I(t)=
	\begin{cases}
		&1  \mbox{\;\; if}\ t\in [0,t_1] \\
		&0  \mbox{\;\; if}\ t\in [t_2,\infty).
	\end{cases}
\end{equation*}
and set $\Phi(u):=\mc I(\|u\|).$  Now we define the truncated functional $\hat{\mc E}_{\al,\la}: D^{2,p}(\mb R^N)\to \mb R$ of $\mc E_{\al,\la}$ as
\begin{align}\label{ii}
	\hat{\mc E}_{\al,\la}(u)&:= \frac{a}{p}\displaystyle\int_{{\mathbb R^N}} |\Delta u|^{p}dx+\frac{ b}{p\theta}\left( \int_{\mathbb R^N}|\Delta u|^pdx\right)^{\theta}\\ &\qquad\quad-\Phi (u)\frac{\la}{r}\int_{\mathbb R^N}f(x)|u|^r dx -\Phi(u)\frac {\al}{2p_{\mu}^*}\int_{{\mathbb R^N}}\left(\int_{{\mathbb R^N}} \frac{|u|^{p_{\mu}^*}}{|x-y|^\mu}dy\right)|u|^{p_{\mu}^*} dx.\notag
\end{align} 
Then, it can easily seen that $\hat{\mc E}_{\al,\la}$ satisfies the following:
\begin{enumerate}
\item  $\hat{\mc E}_{\al,\la}\in C^1(D^{2,p}(\mb R^N),\mb R) $, $\hat{\mc E}_{\al,\la}( 0)=0$. 
\item  $\hat{\mc E}_{\al,\la}$ is even, coercive and bounded from below in $D^{2,p}(\mb R^N)$.
\item Let $c<0$, then for any $\la>0$ there exists $\overline{\Lambda}>0$ such that for all $\al\in (0,\overline{\Lambda})$, $\hat{\mc E}_{\al,\la}$ satisfies the Palais-Smale condition, by Lemma \ref{l2}.
\item Let $c<0$, then for any $\al>0$ there exists $\underline{\Lambda}>0$ such that $\hat{\mc E}_{\al,\la}$ satisfies the Palais-Smale condition for all \(\al\in (0, \underline{\Lambda})\), by Lemma \ref{l2}.

\item If $\hat{\mc E}_{\al,\la}( u)<0,$ then $\|u\|\leq t_1$ and $\hat{\mc E}_{\al,\la}(u)=\mc E_{\al,\la}(u)$.
\end{enumerate}
  For any $n\in\mb N$, we consider $n$ numbers of disjoint open sets denoted by $V_j$, $j=1,2,\cdots n$ with $\cup_{j=1}^n V_j\subset \Om$, where $\Om\not=\emptyset$ is given as in Theorem \ref{th1}. Now we choose $u_j\in D^{2,p}(\mb R^N)\cap C_0^\infty(V_j)\setminus\{0\}$, with $\|u_j\|=1$ for each $j=1,2,\cdots,n$. Set $$X_n=span\{u_1,u_2,\cdots,u_n\}.$$  Now we claim that there exists $0<\varrho_n<t_1,$ sufficiently small such that \begin{align}\label{claim1}
 	m_n:=\max\{\hat{\mc E}_{\al,\la}(u):u\in X_n,~\|u\|=\varrho_n\}< 0.
 \end{align}
  Suppose that \eqref{claim1} does not hold. Then there exists a sequence $\{u_k\}:=\{u_k^{(n)}\}$ in $X_n$ such that 
\begin{align}\label{f_11}\|u_k\|\to\infty;\; \hat{\mc E}_{\al,\la}(u_k)\geq 0.
\end{align} 
Let's set\[w_k=\frac{u_k}{\|u_k\|}.\] Then $w_k\in D^{2,p}(\mb R^N)$ and $\|w_k\|=1.$ Since $X_n$ is finite dimensional, there exists $w\in X_n\setminus\{0\}$ such that
\begin{align*}
	w_k&\to w \text{\;\; strongly with respect to \;\;} \|\cdot\|;\\
	w_k(x)&\to w(x) \text{\;\; a.e. in \;\;} \mb R^N.
\end{align*} As $w\not\equiv 0$, we get $|u_k(x)|\to\infty$ as $k\to\infty$. Thus, as $k\to\infty$,
\begin{align*}
&\frac{1}{\|u_k\|^{p\theta }}\int_{\mb R^N}\int_{\mb R^N} \frac{|u_k(x)|^{{p_{\mu}^*}}|u_k(y)|^{{p_{\mu}^*}}}{|x-y|^\mu}dxdy\\
&=\int_{\mb R^N}\int_{\mb R^N} \frac{|u_k(x)|^{{p_{\mu}^*}-\frac{p\theta}2}|u_k(y)|^{{p_{\mu}^*}-\frac{p\theta}2}}{|x-y|^\mu} |w_k(x)|^{\frac{p\theta}2}|w_k(y)|^{\frac{p\theta}2}dxdy\to \infty.
\end{align*}
Using this together with \eqref{ii}, we obtain
\begin{align*}
	\hat{\mc E}_{\al,\la}(u_k)&\leq \frac ap \|u_k\|^p+\frac{b}{p\theta }\|u_k\|^{p \theta}-\frac{\al}{2{p_{\mu}^*}}\int_{\mb R^N}\int_{\mb R^N} \frac{|u_k(x)|^{{p_{\mu}^*}}|u_k(y)|^{{p_{\mu}^*}}}{|x-y|^\mu}dxdy\\
	&\leq \|u_k\|^{p\theta }\left(\left(\frac ap +\frac{b}{p\theta}\right)-\frac{\al}{2{p_{\mu}^*}}\frac{1}{\|u_k\|^{p\theta }}\int_{\mb R^N}\int_{\mb R^N} \frac{|u_k(x)|^{{p_{\mu}^*}}|u_k(y)|^{{p_{\mu}^*}}}{|x-y|^\mu}dxdy\right)\\
	&\to-\infty
\end{align*} as $k\to\infty.$
This contradicts \eqref{f_11}. Thus, the claim is proved.

\noi Now choose $A_n:=\{u\in X_n\; :\; \|u\|=\varrho_n\}$. Clearly $\gamma (A_n)=n$ and $A_n$ is closed and symmetric, and hence $A_n\in\Sigma_n$ and also from \eqref{claim1}, $\sup_{u\in A_n}\hat{\mc E}_{\al,\la}(u)<0.$ Therefore, $\hat{\mc E}_{\al,\la} $ satisfies all the assumption in Theorem \ref{sym mt}. Thus, $\hat{\mc E}_{\al,\la}$ admits a sequence of critical points $\{u_n\}$ in $D^{2,p}(\mb R^N)$ such that $u_n\not =0$, $\hat{\mc E}_{\al,\la} (u_n)\leq 0$ for each $n\in \mb N$  and $\|u_n\| \to 0$ as $n\to\infty$. So, for $t_1>0$, there exists $ n_0\in\mathbb N$ such that for all $n\geq n_0$
it follows that $\|u\|<t_1$ which yields that $\hat{\mc E}_{\al,\la}(u_n)= {\mc E}_{\al,\la}(u_n)$ for all $n> n_0.$ This concludes the proof of the theorem.\qed
\subsection{ \texorpdfstring{$r=p$}{ } and  \texorpdfstring{ $\al=1$}{ }}
In this subsection, we use the following $\mb Z_2$-symmetric version of mountain
 pass theorem due to  \cite{r51rabinowitz1986minimax}, to prove Theorem \ref{th12}.
\begin{theorem}\label{sm}
		Let $X$ be an infinite dimensional Banach space with $X= Y\oplus Z$, where $Y$ is finite dimensional and let \(I \in C^{1}(X,\mathbb R)\) be an even functional with $I(0)=0$ such that the following conditions hold.
		\begin{enumerate}
			\item[$(\mc I_1)$] There exist positive constants \(\rho, \al>0\) such that $I(u)\geq\al$ for all \(u\in \partial B_{\rho}(0) \cap Z\);
			\item[$(\mc I_2)$] There exists $c^*> 0$ such that \(I\) satisfies the $(PS)_c$ condition for $0<c< c^*$;
			\item[$(\mc I_3)$] For any finite dimensional subspace $\tilde{X}\subset X$, there exists \(R= R(\tilde{X})>0\) such that $I(u)\leq 0$ for all \(u\in\tilde{X}\setminus B_{R}(0)\).
		\end{enumerate}
		Assume that $Y$ is $k-$dimensional and $Y= span\{v_1, v_2, \cdots, v_k\}$. For $n \geq k$, inductively choose \(v_{n+1}\not\in Y_n := span\{v_1, v_2, \cdots, v_n\}.\) Let \(R_n =R(Y_n)\) and \(D_n = B_{R_n}(0)\cap Y_k\). Define
		\[G_n :=\{h \in C(D_n, X): h \;\mbox{is odd and}\; h(u)=u,\; \forall\; u\in \partial B_{R_n}(0)\cap Y_n\}\]
		and
		\begin{align}\label{et}
		\Gamma_j := \{h(\overline{D_n\setminus E}) : h \in G_n, n\geq j, E\in \sum_{n-j},\;\mbox{and}\; \gamma(E)\leq n-j\},
\end{align}
		\[{\sum}_{n} := \{E: E\subset X \mbox{is closed symmetric}, 0\not\in E, \gamma(E)\geq n\},\]
		where $\ga(E)$ is Krasnoselskii's genus of $E$. For each $j\in \mathbb N$, set
		\[c_j :=\inf_{K\in \Gamma_j} \max_{u\in K} I(u).\]
		Then \(0<\alpha\leq c_j\leq c_{j+1}\) for $j>k$ and  if $j>k$ and $c_j< c^*$, then \(c_j\) is a critical value of $I$. Furthermore, if $c_j=c_{j+1}=\cdots=c_{j+m}=c< c^*$ for $j>k$, then $\gamma(K_c)\geq m+1$, where
		\[K_c= \{u\in X : I(u)=c\; \mbox{and} \;I^{\prime}(u)=0\}.\]
	\end{theorem}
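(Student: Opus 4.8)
The plan is to prove the theorem by the standard minimax scheme built on Krasnoselskii's genus together with an odd (equivariant) deformation lemma, following Rabinowitz's treatment of $\mathbb Z_2$-symmetric variational problems. The central objects are the minimax values $c_j=\inf_{K\in\Gamma_j}\max_{u\in K}I(u)$, and the whole argument rests on three features: that the family $\Gamma_j$ is stable under odd continuous deformations fixing the distinguished boundary sphere; that every member of $\Gamma_j$ (for $j>k$) must meet $\partial B_\rho(0)\cap Z$; and that, once the $(PS)_c$ condition is available on $(0,c^*)$, the failure of a level $c\in(0,c^*)$ to be critical or to carry enough genus can be contradicted by pushing down an almost-optimal set. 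I would first record that each $\Gamma_j$ is nonempty and each $c_j$ finite: the inclusion $D_n\hookrightarrow X$ is odd and restricts to the identity on $\partial B_{R_n}(0)\cap Y_n$, so it lies in $G_n$; taking $n=j$ and $E=\emptyset$ gives $D_j\in\Gamma_j$, and compactness of $D_j$ with continuity of $I$ yields $c_j<\infty$. Monotonicity $c_j\le c_{j+1}$ follows from the nesting $\Gamma_{j+1}\subset\Gamma_j$, since a larger index imposes a more restrictive admissibility condition on the excised sets, so the infimum is taken over a smaller collection.

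Next comes the intersection lemma, which produces the lower bound $c_j\ge\alpha$ for $j>k$. Fix $K=h(\overline{D_n\setminus E})\in\Gamma_j$. Using that $h$ is odd, that it equals the identity on $\partial B_{R_n}(0)\cap Y_n$, together with the subadditivity $\gamma(A\cup B)\le\gamma(A)+\gamma(B)$ and the monotonicity of the genus under odd continuous maps, one shows
\[ K\cap\big(\partial B_\rho(0)\cap Z\big)\neq\emptyset, \]
the dimension count for this intersection being exactly what forces the hypothesis $j>k=\dim Y$. Hypothesis $(\mathcal{I}_1)$ then gives $\max_{u\in K}I(u)\ge\alpha$, and passing to the infimum over $K\in\Gamma_j$ yields $c_j\ge\alpha>0$. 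Combined with the previous step, $0<\alpha\le c_j\le c_{j+1}$ for $j>k$, and each such $c_j$ lies in $(0,c^*)$ whenever $c_j<c^*$.

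The criticality and multiplicity assertions follow from an odd deformation lemma. Suppose $c=c_j<c^*$ with $j>k$ and assume $c$ is a regular value. Since $0<c<c^*$, the $(PS)_c$ condition of $(\mathcal{I}_2)$ furnishes an equivariant deformation $\eta$ carrying $\{I\le c+\varepsilon\}$ into $\{I\le c-\varepsilon\}$ while fixing a neighborhood of the lower sublevel set and hence fixing $\partial B_{R_n}(0)\cap Y_n$. Because $\Gamma_j$ is invariant under such $\eta$, applying $\eta$ to an almost-optimal $K\in\Gamma_j$ produces a set in $\Gamma_j$ on which $\max I<c$, contradicting the definition of $c_j$; thus $c_j$ is a critical value. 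For the multiplicity claim, assume $c=c_j=\cdots=c_{j+m}<c^*$ but $\gamma(K_c)\le m$. Choosing a symmetric neighborhood $N$ of $K_c$ with $\gamma(\overline N)\le m$, one deforms $\{I\le c+\varepsilon\}\setminus N$ into $\{I\le c-\varepsilon\}$ and applies this to an almost-optimal $K\in\Gamma_{j+m}$; the excision property of the genus gives $\overline{K\setminus N}\in\Gamma_j$, so after deformation $c_j\le\max I<c$, a contradiction. Hence $\gamma(K_c)\ge m+1$.

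The main obstacle is the intersection lemma of the second paragraph: establishing nonemptiness (and positivity of genus) for $K\cap(\partial B_\rho\cap Z)$ requires combining the oddness and boundary-normalization of $h$ with the subadditivity and odd-map monotonicity of the genus, and it is precisely here that the finite-dimensional space $Y$, its dimension $k$, and the index shift $n-j$ enter and pin down the condition $j>k$. By contrast, the deformation steps are routine once an equivariant deformation lemma is in hand, which is available because $I$ is even and the $(PS)_c$ condition holds throughout the relevant range of levels.
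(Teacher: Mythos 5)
The paper does not actually prove Theorem \ref{sm}: it is quoted as a known result from Rabinowitz's monograph \cite{r51rabinowitz1986minimax} (Theorem 9.12 there), so there is no in-paper proof to compare against. Your outline reproduces the classical argument from that source correctly: $D_j\in\Gamma_j$ (taking $n=j$, $E=\emptyset$) gives $c_j<\infty$; the nesting $\Gamma_{j+1}\subset\Gamma_j$ gives $c_j\le c_{j+1}$; the intersection property combined with $(\mc I_1)$ gives $c_j\ge\al>0$ for $j>k$; and the equivariant deformation lemma --- available since $I$ is even and $(PS)_c$ holds for $0<c<c^*$, with the deformation odd and fixing $\{I\le 0\}\supset \partial B_{R_n}(0)\cap Y_n$ so that $\Gamma_j$ is preserved --- yields both the criticality of $c_j$ and, via the excision property $\overline{K\setminus N}\in\Gamma_j$ for $K\in\Gamma_{j+m}$ and $\gamma(\overline N)\le m$, the estimate $\gamma(K_c)\ge m+1$.

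The one step you assert rather than prove is the intersection lemma, and since you yourself identify it as the crux it should be written out. For $K=h(\overline{D_n\setminus E})\in\Gamma_j$ set $\mc O:=\{u\in B_{R_n}(0)\cap Y_n:\ \|h(u)\|<\rho\}$ (one may assume $R_n>\rho$). Since $h$ is odd, continuous, and equals the identity on $\partial B_{R_n}(0)\cap Y_n$, the set $\mc O$ is a bounded, symmetric, open neighbourhood of $0$ in $Y_n\cong\mathbb R^n$ whose relative boundary satisfies $\|h(u)\|=\rho$ for all $u\in\partial\mc O$; hence $\gamma(\partial\mc O)=n$ by the genus computation for boundaries of symmetric neighbourhoods of the origin (the paper's Lemma \ref{3.2}). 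Monotonicity under odd continuous maps and subadditivity give
\begin{equation*}
\gamma\left(h\left(\overline{\partial\mc O\setminus E}\right)\right)\ \ge\ \gamma\left(\overline{\partial\mc O\setminus E}\right)\ \ge\ \gamma(\partial\mc O)-\gamma(E)\ \ge\ n-(n-j)\ =\ j\ >\ k.
\end{equation*}
If $h(\overline{\partial\mc O\setminus E})$ missed $Z$, the odd continuous projection onto $Y$ along $Z$ would map it into $Y\setminus\{0\}\cong\mathbb R^{k}\setminus\{0\}$, forcing its genus to be at most $k$, a contradiction. Hence $K\cap\partial B_\rho(0)\cap Z\neq\emptyset$ and $(\mc I_1)$ gives $\max_{u\in K}I(u)\ge\al$, as you claim. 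With this inserted your proof is complete and coincides with the standard one.
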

Now we show that $\mc E_{\la} $ satisfies all the hypotheses of Theorem \ref{sm}, when $r=p$. 
\begin{lemma}\label{smg}
Let  $\al=1$ and $r=p $.	Then  $\mc E_{ \la}$ satisfies the conditions $(\mc I_1)$-$(\mc I_3)$ of Theorem \ref{sm} for all $\la\in (0,\,aS\|f\|^{-1}_{\frac{p^*}{p^*-p}}). $
\end{lemma}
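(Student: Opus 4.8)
The plan is to verify the three structural conditions $(\mc I_1)$, $(\mc I_2)$, $(\mc I_3)$ of Theorem \ref{sm} one at a time, the evenness of $\mc E_{\la}$ and $\mc E_{\la}(0)=0$ being immediate. For $(\mc I_1)$, I would start from the energy functional with $r=p$, $\al=1$, and apply the estimate \eqref{e3} in the form $\int_{\mb R^N}f(x)|u|^{p}\,dx\leq S^{-1}\|f\|_{\frac{p^*}{p^*-p}}\|u\|^p$ together with the definition \eqref{n3} of $S_{H,L}$, yielding
\[
\mc E_{\la}(u)\geq \frac{1}{p}\left(a-\la S^{-1}\|f\|_{\frac{p^*}{p^*-p}}\right)\|u\|^p-\frac{1}{2p_{\mu}^*}S_{H,L}^{-\frac{2p_{\mu}^*}{p}}\|u\|^{2p_{\mu}^*}.
\]
Since $\la<aS\|f\|_{\frac{p^*}{p^*-p}}^{-1}$, the coefficient of $\|u\|^p$ is strictly positive, and since $2p_{\mu}^*>p$ (equivalently $N+2p>\mu$), the lower-order term dominates for $\|u\|=\rho$ small; choosing such a $\rho$ gives $\mc E_{\la}(u)\geq \al>0$ on all of $\partial B_{\rho}(0)$, hence in particular on $\partial B_{\rho}(0)\cap Z$. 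Condition $(\mc I_2)$ requires no new work: I would simply set $c^*:=\min\{c_1,c_2\}>0$ as in Lemma \ref{ce}; because $\la$ lies in the admissible range $\left(0,aS\|f\|_{\frac{p^*}{p^*-p}}^{-1}\right)$, Lemma \ref{ce} furnishes the $(PS)_c$ condition for every $0<c<c^*$.

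The substantive point is $(\mc I_3)$. Fix a finite-dimensional subspace $\tilde X\subset D^{2,p}(\mb R^N)$. Here I would introduce the positively $1$-homogeneous functional
\[
u\longmapsto \left(\int_{\mb R^N}\int_{\mb R^N}\frac{|u(x)|^{p_{\mu}^*}|u(y)|^{p_{\mu}^*}}{|x-y|^{\mu}}\,dx\,dy\right)^{\frac{1}{2p_{\mu}^*}},
\]
which is continuous (by the Hardy--Littlewood--Sobolev inequality and \eqref{sr}) and strictly positive on $\tilde X\setminus\{0\}$. Minimizing it over the unit sphere of $\tilde X$, which is compact since $\tilde X$ is finite dimensional, produces a constant $C(\tilde X)>0$ such that
\[
\int_{\mb R^N}\int_{\mb R^N}\frac{|u(x)|^{p_{\mu}^*}|u(y)|^{p_{\mu}^*}}{|x-y|^{\mu}}\,dx\,dy\ \geq\ C(\tilde X)^{2p_{\mu}^*}\|u\|^{2p_{\mu}^*},\qquad u\in\tilde X.
\]
Because $f\geq 0$ and $\la>0$, the term $-\tfrac{\la}{p}\int_{\mb R^N}f|u|^p$ is nonpositive and may be discarded from an upper bound, so that
\[
\mc E_{\la}(u)\ \leq\ \frac{a}{p}\|u\|^p+\frac{b}{p\theta}\|u\|^{p\theta}-\frac{C(\tilde X)^{2p_{\mu}^*}}{2p_{\mu}^*}\|u\|^{2p_{\mu}^*}.
\]
Since the standing hypothesis $p\theta<2p_{\mu}^*$ (and $p<2p_{\mu}^*$) makes the highest-order term the negative one, $\mc E_{\la}(u)\to-\infty$ as $\|u\|\to\infty$ in $\tilde X$, giving an $R=R(\tilde X)>0$ with $\mc E_{\la}(u)\leq 0$ on $\tilde X\setminus B_R(0)$.

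The main obstacle, and the only genuinely nontrivial step, is the coercivity-on-finite-dimensional-subspaces estimate used in $(\mc I_3)$: one must be sure the nonlocal Choquard energy is bounded below by $\|u\|^{2p_{\mu}^*}$ on each $\tilde X$. This rests on the compactness of the unit sphere of $\tilde X$ together with the positivity of the Hardy--Littlewood--Sobolev functional, and it is precisely the condition $p\theta<2p_{\mu}^*$ that allows this critical term to dominate the Kirchhoff contribution $\tfrac{b}{p\theta}\|u\|^{p\theta}$ as $\|u\|\to\infty$. Conditions $(\mc I_1)$ and $(\mc I_2)$ are then routine, reducing respectively to the smallness of $\la$ and to a direct citation of Lemma \ref{ce}.
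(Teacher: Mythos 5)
Your proposal is correct and follows essentially the same route as the paper: the same smallness estimate for $(\mathcal I_1)$, a direct citation of Lemma \ref{ce} for $(\mathcal I_2)$, and for $(\mathcal I_3)$ your minimization of the homogeneous Choquard functional over the unit sphere of $\tilde X$ is exactly the ``all norms are equivalent on a finite-dimensional space'' step the paper uses (you merely spell out why the equivalence holds, which is if anything slightly more careful, since the Hardy--Littlewood--Sobolev quantity need not a priori be a norm). No gaps.
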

\begin{proof} We first show the hypotheses $(\mc I_1)-( \mc I_{3})$ of Theorem \ref{sm}.
\\
{\it $(\mathcal I_1):$} For $u\in D^{2,p}(\mb R^N),$ arguing similarly as in \eqref{mpp}, we have
\begin{align*}
	\mc E_{ \la}( u)&\geq  \frac{\|u\|^p}{p}\left( a- \la S^{-1} \|f\|_{\frac{p^*}{p^*-p}}\right)-\frac{1}{2p_{\mu}^*}  S_{H,L}^{-\frac{2p_{\mu}^*}{p}}\|u\|^{2p_{\mu}^*}.
\end{align*} 
Now for $\la<aS\|f\|^{-1}_{\frac{p^*}{p^*-p}}$,  we can choose $\|u\|=l<<1$ so that $\mc E_{ \la}(u)\geq \mc K>0.$\\
{\it $(\mc I_2):$} It follows from Lemma \ref{ce}.\\
{\it $(\mc I_3):$} To show this, first claim that for any finite dimensional subspace
$Y$ of $D^{2,p}(\mb R^N)$ there exists $ R_0= R_0(Y)$ such that $ \mc E_{\la}( u)<0$ for all $u\in D^{2,p}(\mb R^N)\setminus B_{ R_0} (Y),$ where $B_{R_0}(Y)=\{u \in D^{2,p}(\mb R^N): \|u\|\leq  R_0\}.$ Fix $\phi\in D^{2,p}(\mb R^N),\;\|\phi\|=1.$ For $t>1$, we get
\begin{align}\label{sm1}
	\mc E_{\la}( t\phi)&\leq \frac ap t^{p} \|\phi\|^p+  \frac{t^{p\theta} b}{p\theta}\|\phi\|^{p\theta }
	-\frac{1}{{2p_{\mu}^*}} t^{2p_{\mu}^*}\int_{\mb R^N}\int_{\mb R^N} \frac{|\phi(x)|^{{p_{\mu}^*}}|\phi(y)|^{{p_{\mu}^*}}}{|x-y|^\mu}dxdy\notag\\
	&\leq C_4  t^{p\theta }\|\phi\|^{p\theta }
	-C_5  t^{2p_{\mu}^*}\|\phi\|_{\mu}^{2p_{\mu}^*}
\end{align}
Since $Y$ is finite dimensional, all norms are equivalent on $Y$, which yields that there exists some constant $C(Y)>0$ such that $C(Y)\|\phi\|\leq\|\phi\|_{\mu}.$ Therefore from \eqref{sm1}, we obtain

\begin{align*}
	\mc E_{ \la}( t\phi)&\leq C_4  t^{p\theta}
	-C_5 (C(Y))^{2p_{\mu}^*}t^{2p_{\mu}^*}\|\phi\|^{2p_{\mu}^*}\\
	&= C_4  t^{p\theta}
	-C_5 (C(Y))^{2p_{\mu}^*} t^{2p_{\mu}^*}\to-\infty
\end{align*}
as $t\to\infty$.
Hence, there exists $ R_0>0$ large enough such that $\mc E_{\la}( u)<0$ for all
$u\in D^{2,p}(\mb R^N)$ with $\|u\|={R}$ and $ R\geq R_0$. Therefore, $\mc E_{\la}$ satisfies the assertion $(\mc I_3)$.
\end{proof}

\begin{lemma}\label{ss}
There exists a non-decreasing sequence $\{S_n\}$ of positive real numbers, independent of $\la$ such that for any $\la>0$, we have
\[c_n^\la:=\inf_{A\in \Gamma_n}\max_{u\in A} \mc E_{ \la}(u)<S_n,\] where $\Gamma_n$ is defined in \eqref{et}.
\end{lemma}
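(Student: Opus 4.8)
The plan is to dominate the (a priori $\la$-dependent) minimax levels $c_n^{\la}$ by a quantity built only from the $\la$-free part of the energy. Set
\[
\mc E_0(u):=\frac{a}{p}\|u\|^{p}+\frac{b}{p\theta}\|u\|^{p\theta}-\frac{1}{2p_{\mu}^*}\int_{\mb R^N}\int_{\mb R^N}\frac{|u(x)|^{p_{\mu}^*}|u(y)|^{p_{\mu}^*}}{|x-y|^{\mu}}\,dxdy ,
\]
so that, since $r=p$, $f\ge 0$ and $\la>0$, one has $\mc E_{\la}(u)=\mc E_0(u)-\frac{\la}{p}\int_{\mb R^N}f(x)|u|^{p}\,dx\le \mc E_0(u)$ for every $u$. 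The subtle point is that the classes $\Gamma_n$ in \eqref{et} depend on $\la$ through $R_n=R(Y_n)$ coming from $(\mc I_3)$, so one cannot simply define $S_n$ as a minimax over $\Gamma_n$. To break this dependence I would exhibit a single, explicit element of $\Gamma_n$ whose energy can be controlled uniformly: taking $h=\id$ and $E=\emptyset$ in \eqref{et} shows that $D_n=B_{R_n}(0)\cap Y_n\in\Gamma_n$. Since $D_n$ is compact and $\mc E_{\la}$ is continuous, the maximum is attained and
\[
c_n^{\la}\le \max_{u\in D_n}\mc E_{\la}(u)\le \max_{u\in D_n}\mc E_0(u)\le \sup_{u\in Y_n}\mc E_0(u)=:S_n ,
\]
the last inequality simply enlarging the domain from $D_n$ to the whole finite-dimensional space $Y_n$.

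Next I would fix the spanning vectors $v_1,v_2,\dots$ of Theorem \ref{sm} once and for all, independently of $\la$ (the inductive construction only needs $v_{n+1}\notin Y_n$), and in addition choose them with pairwise disjoint supports, each meeting $\{f>0\}$ in positive measure; this is possible under $(f_1)$ alone because $\{f>0\}$ has positive measure. With this choice $S_n=\sup_{Y_n}\mc E_0$ is manifestly independent of $\la$, and moreover $\int_{\mb R^N}f|u|^{p}\,dx>0$ for every $u\in Y_n\setminus\{0\}$ by disjointness of supports. Finiteness of $S_n$ holds because on the finite-dimensional space $Y_n$ the Choquard integral is comparable to $\|u\|^{2p_{\mu}^*}$ (both are continuous and $2p_{\mu}^*$-homogeneous, and the integral is positive off the origin), so $\mc E_0(u)\le \frac{a}{p}\|u\|^{p}+\frac{b}{p\theta}\|u\|^{p\theta}-c_0\|u\|^{2p_{\mu}^*}\to -\infty$ as $\|u\|\to\infty$ (recall $p<p\theta<2p_{\mu}^*$); hence $\mc E_0$ attains a finite maximum on $Y_n$. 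Monotonicity $S_n\le S_{n+1}$ is immediate from $Y_n\subset Y_{n+1}$, and $S_n>0$ because $\mc E_0(tv_1)>0$ for small $t>0$ when $a>0$.

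Finally I would upgrade $c_n^{\la}\le S_n$ to the strict inequality. Let $u^*\in D_n$ realize $\max_{D_n}\mc E_{\la}$. If $u^*=0$ then $c_n^{\la}\le\mc E_{\la}(0)=0<S_n$. If $u^*\neq 0$, then $\int_{\mb R^N}f|u^*|^{p}\,dx>0$ by the transversality of the $v_i$, so
\[
c_n^{\la}\le \mc E_{\la}(u^*)=\mc E_0(u^*)-\frac{\la}{p}\int_{\mb R^N}f|u^*|^{p}\,dx<\mc E_0(u^*)\le S_n .
\]
In either case $c_n^{\la}<S_n$, which is the claim. The main obstacle is the $\la$-dependence of the minimax class $\Gamma_n$ flagged above; once it is bypassed by passing to the concrete element $D_n$ and then to the fixed space $Y_n$, what remains is the elementary monotonicity and positivity bookkeeping together with the disjoint-support choice of the $v_i$ that forces the perturbation term to be strictly negative at any nonzero maximizer.
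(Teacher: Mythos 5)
Your proof is correct and rests on the same one--line mechanism as the paper's: since $f\ge 0$ and $\la>0$, the term $-\frac{\la}{r}\int f|u|^{r}\,dx$ is nonpositive, so $\mc E_{\la}\le \mc E_0$ and the minimax level is dominated by a $\la$-free quantity. The paper simply sets $S_n:=\inf_{A\in\Gamma_n}\max_{u\in A}\bigl[\frac ap\|u\|^p+\frac{b}{p\theta}\|u\|^{p\theta}-\frac{1}{2p_\mu^*}\|u\|_\mu^{2p_\mu^*}\bigr]$ and declares finiteness and monotonicity ``clear from the definition of $\Gamma_n$''; you instead pass to the explicit competitor $D_n=\overline{D_n\setminus\emptyset}\in\Gamma_n$ and take $S_n:=\sup_{Y_n}\mc E_0$. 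Your route buys three things the paper glosses over: (i) it neutralizes the possible $\la$-dependence of $\Gamma_n$ through $R_n=R(Y_n)$ (which in the paper is only implicitly $\la$-free because the $(\mc I_3)$ estimate in Lemma \ref{smg} discards the $\la$-term); (ii) it gives an honest finiteness proof via the equivalence of the Choquard integral with $\|\cdot\|^{2p_\mu^*}$ on the finite-dimensional $Y_n$ together with $p\theta<2p_\mu^*$; and (iii) via the disjoint-support choice of the $v_i$ it upgrades the paper's displayed ``$\le$'' to the strict inequality ``$<$'' actually asserted in the statement. The price is a slightly larger $S_n$ (a supremum over all of $Y_n$ rather than an infimum over $\Gamma_n$), which is harmless here but matters downstream: in the proof of Theorem \ref{th12} one needs $\sup_n S_n<c^*$ for $a$ large, and that comparison should be rechecked against your larger choice of $S_n$ (it still works, since $\sup_{Y_n}\mc E_0\le \sup_{t>0}\bigl[\frac ap t^p+\frac{b}{p\theta}t^{p\theta}-c_0(Y_n)t^{2p_\mu^*}\bigr]$ grows like a power of $a$ with exponent $\frac{2p_\mu^*}{2p_\mu^*-p}$, matching the growth of $c^*$ only if the constants are tracked; the paper has the same issue). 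Net: same approach, executed more carefully.
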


\begin{proof}
\noi Recalling the definition of $c_n^\la$ and \eqref{ii}, we get 
 \[ c_n^\la\leq \inf_{A\in \Gamma_n}\max_{u\in A}\left[\frac ap \|u\|^p+\frac{b}{p\theta}\|u\|^{p\theta}-\frac{1}{2{p_{\mu}^*}}\|u\|_{\mu}^{2p_{\mu}^*}\right]:=S_n\] 
 Then clearly from the definition of $\Gamma_n$, it follows that $S_n<\infty$ and $S_n\leq S_{n+1}$.
\end{proof}

\noi {\bf Proof of Theorem \ref{th12}:} From the hypotheses of the theorem it follows that $\mc E_{\al,\la}$	is even and we have $\mc E_{\al, \la}(0)=0.$  
From the Lemma \ref{ss}, we can choose, $\hat a>0$ sufficiently large such that for any $a>\hat a$,
\[\sup_n S_n<\left(\frac{1}{ p}- \frac{1}{2p^*_\mu}\right)\left(a  S_{H,L}\right)^{\frac{2N-\mu}{N-\mu +2p}}:= c^*,\] that is,
\[c_n^\la<S_n<\left(\frac{1}{ p}- \frac{1}{2p^*_\mu}\right)\left(a  S_{H,L}\right)^{\frac{2N-\mu}{N-\mu +2p}}.\] Hence, for all $\la\in(0,\,aS\|f\|^{-1}_{\frac{p^*}{p^*-p}})$ and $a>\hat a$, we have
\[0<c_1^\la\leq c_2^\la\leq\cdots\leq c_n^\la<S_n<c^*.\]
Now by Theorem \ref{sm}, we infer that the levels $c_1^\la\leq c_2^\la\leq\cdots\leq c_n^\la$ are critical values of $\mc I_\la.$ Therefore, if $c_1^\la<c_2^\la<\cdots< c_n^\la$, then $\mc E_{ \la}$ has at least $n$ number of critical points. Furthermore, if $c_j^\la=c_{j+1}^\la$ for some $j=1,2,\cdots,k-1$, then again Theorem \ref{sm} yields that $A_{c_j^\la}$ is an infinite set. Hence, the problem $(\mc P_{\la})$ has infinitely many solutions. 
Consequently, the problem $(\mc P_{\la})$ has at least $n$ pairs of solutions in $D^{2,p}(\mb R^N)$.\qed

\subsection{\texorpdfstring{$p\theta \leq r<p^*$}{ } and \texorpdfstring{$\al=1$}{ }}
\noi In this subsection, we prove Theorem \ref{th13} using Theorem \ref{sm}. For that, first we show $\mc E_{\la}$ verifies all the hypotheses of Theorem \ref{sm}, when $p\theta \leq r<p^*$.
\begin{lemma}\label{ft}
Let $\al=1$ and $p\theta \leq r<p^*$.	Then  $\mc E_{\la}$ satisfies the conditions $(\mc I_1)$-$(\mc I_3)$ of Theorem \ref{sm} in the following cases:\begin{enumerate}
    \item If $r=p\theta$ and $\lambda\in \left(0,bS^{\theta}||f||_{\frac{p^*}{p^*-p\theta}}^{-1}\right)$.
    \item If $p\theta < r<p^*$ and $\la>0$.
\end{enumerate} 
\end{lemma}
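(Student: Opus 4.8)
The plan is to verify each of the three hypotheses of Theorem~\ref{sm} directly from the definition of $\mc E_{\la}$, using the two basic estimates \eqref{e3} and \eqref{n3} that control the subcritical and Choquard terms by powers of $\|u\|$. The common starting point is the uniform lower bound
\begin{align*}
\mc E_{\la}(u)\geq \frac{a}{p}\|u\|^p+\frac{b}{p\theta}\|u\|^{p\theta}-\frac{1}{2p_{\mu}^*}S_{H,L}^{-\frac{2p_{\mu}^*}{p}}\|u\|^{2p_{\mu}^*}-\frac{\la}{r}S^{-\frac rp}\|f\|_{\frac{p^*}{p^*-r}}\|u\|^{r},
\end{align*}
valid for every $u\in D^{2,p}(\mb R^N)$. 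Since this bound is independent of any splitting $X=Y\oplus Z$, proving positivity on a small sphere yields $(\mc I_1)$ on all of $X$, and in particular on $Z$.

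For $(\mc I_1)$ I would distinguish the two cases. When $r=p\theta$, the two terms of order $p\theta$ combine into $\tfrac{1}{p\theta}\bigl(b-\la S^{-\theta}\|f\|_{\frac{p^*}{p^*-p\theta}}\bigr)\|u\|^{p\theta}$ (here $S^{-r/p}=S^{-\theta}$), whose coefficient is strictly positive precisely when $\la<bS^{\theta}\|f\|^{-1}_{\frac{p^*}{p^*-p\theta}}$, i.e.\ exactly the stated range; as $p\theta<2p_{\mu}^*$, the only remaining negative term is of strictly higher order, so $\mc E_{\la}(u)\geq\al>0$ for $\|u\|=\rho$ with $\rho$ small. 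When $p\theta<r<p^*$, the subcritical term $\|u\|^{r}$ is itself of order strictly larger than $p\theta$, and $2p_{\mu}^*>p\theta$ as well, so the lowest-order surviving term is the positive $\tfrac{b}{p\theta}\|u\|^{p\theta}$ (or $\tfrac ap\|u\|^p$ when $a>0$); hence positivity on a small sphere holds for \emph{every} $\la>0$, with no restriction.

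For $(\mc I_3)$, fix a finite dimensional subspace $\tilde X\subset D^{2,p}(\mb R^N)$. Since all norms are equivalent on $\tilde X$, the Choquard quantity $\|u\|_\mu:=\bigl(\int_{\mb R^N}\int_{\mb R^N}\frac{|u(x)|^{p_{\mu}^*}|u(y)|^{p_{\mu}^*}}{|x-y|^{\mu}}\,dx\,dy\bigr)^{1/(2p_{\mu}^*)}$ satisfies $\|u\|_{\mu}\geq C(\tilde X)\|u\|$ for some $C(\tilde X)>0$. Because $f\geq0$, the term $-\tfrac{\la}{r}\int_{\mb R^N}f|u|^r\,dx$ is nonpositive and may be discarded, giving
\begin{align*}
\mc E_{\la}(u)\leq \frac ap\|u\|^p+\frac{b}{p\theta}\|u\|^{p\theta}-\frac{C(\tilde X)^{2p_{\mu}^*}}{2p_{\mu}^*}\|u\|^{2p_{\mu}^*}.
\end{align*}
As $2p_{\mu}^*>p\theta\geq p$, the right-hand side tends to $-\infty$ as $\|u\|\to\infty$, so there is $R(\tilde X)>0$ with $\mc E_{\la}(u)\leq0$ on $\tilde X\setminus B_{R(\tilde X)}(0)$, which is $(\mc I_3)$.

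Finally, $(\mc I_2)$ is immediate from Lemma~\ref{cc}: for $p\theta\leq r<p^*$ and $p\theta<2p_{\mu}^*$ that lemma shows $\mc E_{\la}$ satisfies $(PS)_c$ for every $c<c^*=\min\{c_1,c_2\}$, which is exactly the constant required in $(\mc I_2)$. The only genuinely delicate point is bookkeeping the coefficient of $\|u\|^{p\theta}$ in $(\mc I_1)$ in the borderline case $r=p\theta$: it is there that the admissible window $\la\in(0,\,bS^{\theta}\|f\|^{-1}_{\frac{p^*}{p^*-p\theta}})$ is forced, and one must check that the factor $S^{-\theta}$ produced by \eqref{e3} at $r=p\theta$ matches the normalization $bS^{\theta}$ in the hypothesis; every other step is a comparison of the exponents $p\leq p\theta<r$ and $p\theta<2p_{\mu}^*$.
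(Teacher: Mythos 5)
Your proposal is correct and follows essentially the same route as the paper: the same lower bound from \eqref{e3} and \eqref{n3}, the same absorption of the $\|u\|^{p\theta}$ terms when $r=p\theta$ (forcing $\la<bS^{\theta}\|f\|^{-1}_{\frac{p^*}{p^*-p\theta}}$), the citation of Lemma \ref{cc} for $(\mc I_2)$, and the finite-dimensional norm-equivalence argument for $(\mc I_3)$ that the paper delegates to Lemma \ref{smg}. The only cosmetic difference is that for $p\theta<r<p^*$ you compare the negative terms against the $\|u\|^{p\theta}$ term rather than the $\|u\|^{p}$ term as the paper does; both comparisons are valid here.
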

\begin{proof}
 Let $u\in D^{2,p}(\mb R^N)$ with $\|u\|<1$.  Using the similar arguments as in \eqref{mpp},  we get\begin{align}\label{s1}
					\mc E_{\la}( u)
					&\geq \frac ap \|u\|^p+\frac{b}{p\theta}||u||^{p\theta}-\frac \la r S^{-\frac{r}{p}} \|f\|_{\frac{p^*}{p^*-r}}\|u\|^{r}-\frac{1}{2p_{\mu}^*}  S_{H,L}^{-\frac{2p_{\mu}^*}{p}}\|u\|^{2p_{\mu}^*}.
				\end{align}
If $r=p\theta$
\begin{align}\label{s2}
                    \mc E_{\la}( u)
					&\geq \frac ap \|u\|^p+\frac{1}{p\theta}\left(b- \la S^{-\theta} \|f\|_{\frac{p^*}{p^*-p\theta}}\right)||u||^{p\theta}-\frac{1}{2p_{\mu}^*}  S_{H,L}^{-\frac{2p_{\mu}^*}{p}}\|u\|^{2p_{\mu}^*}\notag\\
                    &\geq \frac ap \|u\|^p-\frac{1}{2p_{\mu}^*}  S_{H,L}^{-\frac{2p_{\mu}^*}{p}}\|u\|^{2p_{\mu}^*}.
                \end{align}
\noi   In the view of  \eqref{s1}, \eqref{s2}, $ p<r$ and $p<p_{\mu}^*$, we can choose $0<\rho<1$  sufficiently small so that, we  obtain  for all $u \in D^{2,p}(\mb R^N)$ with $\|u\|=\rho$, $\mc E_{\la}( u) \geq \beta>0$ for some $\beta>0$  depending on $\rho$. Thus $(\mc I_1)$ holds.\\
$(\mc I_2)$ follows from Lemma \ref{cc}, since $c^{**}>0$ and  for $(\mc I_3)$, the argument follows similarly to that of Lemma \ref{smg}.
\end{proof}

\noi {\bf Proof of Theorem \ref{th13}} Using Lemma \ref{ft} and proceeding in a way similar to Lemma \ref{ss} and in Theorem \ref{sm}, we can conclude that the problem $(\mc P_{\la})$ has at least $n$ pairs of solutions for all $\la>0$.\qed\\
\noindent{\bf Acknowledgements :} 
The first and second authors would like to thank the Science and Engineering Research Board, Department of Science and Technology, Government of India, for the financial support under the grant  SRG/2022/001946 and SPG/2022/002068, respectively. 
\bibliographystyle{plain}
\bibliography{references.bib}

\end{document}